\newtheorem{theorem}{Theorem}[section]
\newtheorem{remark}{Remark}[section]
\newtheorem{definition}{Definition}[section]
\newtheorem{lemma}[theorem]{Lemma}
\newtheorem{pro}[theorem]{Proposition}
\newcommand{\bt}{\begin{theorem}}
\newcommand{\bl}{\begin{lemma}}
\newcommand{\el}{\end{lemma}}
\newcommand{\et}{\end{theorem}}
\newcommand{\la}{\label}
\newcommand{\bn}{\begin{eqnarray}}
\newcommand{\en}{\end{eqnarray}}
\newcommand{\bnn}{\begin{eqnarray*}}
\newcommand{\enn}{\end{eqnarray*}}
\newcommand{\ba}{\begin{aligned}}
\newcommand{\ea}{\end{aligned}}
\newcommand{\be}{\begin{equation}}
\newcommand{\ee}{\end{equation}}
\renewcommand{\la}{\label}
\newcommand{\Bv}{{\boldsymbol{v}}}
\newcommand{\Bu}{{\boldsymbol{u}}}
\newcommand{\Be}{{\boldsymbol{e}}}
\newcommand{\BF}{{\boldsymbol{F}}}
\newcommand{\BG}{{\boldsymbol{G}}}
\newcommand{\supp}{\text{supp}}
\newcommand{\bBU}{\bar{{\boldsymbol{U}}}}
\newcommand{\BmcF}{{\boldsymbol{\mathcal{F}}}}
\newcommand{\BPsi}{{\boldsymbol{\Psi}}}
\newcommand{\Bo}{{\boldsymbol{\omega}}}
\newcommand{\what}{\widehat}
\begin{document}

\title[Existence and asymptotic behavior of large axisymmetric solutions]
{Existence and asymptotic behavior of large axisymmetric solutions for steady Navier-Stokes system in a pipe}

\author{Yun Wang}
\address{School of Mathematical Sciences, Center for dynamical systems and differential equations, Soochow University, Suzhou, China}
\email{ywang3@suda.edu.cn}

\author{Chunjing Xie}
\address{School of mathematical Sciences, Institute of Natural Sciences,
Ministry of Education Key Laboratory of Scientific and Engineering Computing,
and SHL-MAC, Shanghai Jiao Tong University, 800 Dongchuan Road, Shanghai, China}
\email{cjxie@sjtu.edu.cn}

\begin{abstract}
In this paper, the existence and uniqueness of strong axisymmetric solutions with large flux for the steady Navier-Stokes system in a pipe are established even when the external force is also suitably large in $L^2$. Furthermore, the exponential convergence rate at far fields for the arbitrary steady solutions with finite $H^2$ distance to the Hagen-Poiseuille flows is established as long as the external forces converge exponentially at far fields. The key point to get the existence of these large solutions is the refined estimate for the derivatives in the axial direction of the stream function and the swirl velocity, which exploits the good effect of the convection term. An important observation for the asymptotic behavior of general solutions is that the solutions are actually small at far fields when they have finite $H^2$ distance to the Hagen-Poiseuille flows. This makes the estimate for the linearized problem  play a crucial role in studying the convergence of general solutions at far fields.
\end{abstract}

\keywords{Hagen-Poiseuille flows, steady Navier-Stokes system, large solution, asymptotic behavior, pipe.}
\subjclass[2010]{
35J40, 35Q30, 76N10}

\thanks{Updated on \today}

\maketitle

\section{Introduction and Main Results}
An important physical problem in fluid mechanics is to study the flows in nozzles. Given an infinitely long nozzle $\tilde{\Omega}$, a natural problem is to investigate the well-posedness theory for the steady Navier-Stokes system
\be \label{NS1}
\left\{
\ba
& ( \Bu \cdot \nabla )\Bu -  \Delta \Bu + \nabla p = 0,\,\,\,\,\text{in}\,\, \tilde{\Omega} \\
& {\rm div}~\Bu = 0,\,\,\,\, \text{in}\,\,\tilde\Omega,
\ea
\right.
\ee
 supplemented with the no slip conditions, i.e.,
 \begin{equation}\label{BC1}
 \Bu=0\,\,\,\,\text{on}\,\,\partial\tilde\Omega,
 \end{equation}
 where $\Bu=(u^x, u^y, u^z)$ denotes the velocity field of the flows.
If $\tilde\Omega$ is a straight cylinder of the form $\Sigma\times \mathbb{R}$ where $\Sigma$ is a smooth two dimensional domain, then there exists a solution $\Bu =(0, 0, u^z(x,y))$ satisfying \eqref{NS1} and \eqref{BC1}. The solution is called the Poiseuille flow and is uniquely determined by the flow flux $\Phi$ defined by
\be\label{fluxBC}
 \Phi= \int_{\Sigma} u^z(x,y) \, dS.
\ee
In particular, if the straight cylinder is a pipe, i.e., $\Sigma$ is the unit disk $B_1(0)$, then the associated Poiseuille flows $\bBU = \bar{U}(r) \Be_z$ have the explicit form as follows,
\be \label{HPflow}
\bar{U}(r) = \frac{2\Phi}{\pi}(1-r^2)  \ \ \ \ \text{with}\,\, r=\sqrt{x^2+y^2},
\ee
which are also called Hagen-Poiseuille flows.

Given an infinitely long nozzle $\tilde\Omega$ tending to a straight cylinder $\Omega$ at far fields, the problem on the well-posedness theory for \eqref{NS1}-\eqref{BC1}    together with the condition
that the velocity field converges to the Posieuille flows in $\Omega$,
  is called Leray's problem nowadays and it was first addressed by Leray (\cite{Leray}) in 1933. The first significant contribution to the solvability of Leray's problem is due to Amick \cite{Amick1, Amick2}, who reduced the proof of existence to the resolution of a well-known variational problem related to the stability of the Poiseuille flow in a straight cylinder. However, Amick left out the investigation of uniqueness and  existence of the solutions with large flux. A rich and detailed analysis of the problem  is due to Ladyzhenskaya and Solonnikov \cite{LS}.  However, the asymptotic far field behavior of the solutions obtained in \cite{LS} is not very clear. Therefore, in order to get a complete resolution for  Leray's problem, the key issue is to study the asymptotic behavior for the solutions of the steady Navier-Stokes equations in infinitely long nozzles.
   The asymptotic behavior for steady Navier-Stokes system in a nozzle was studied extensively in the literature, see \cite{Horgan, HW, LS, AP,Pileckas} and references therein. A classical and straightforward way to prove the asymptotic behavior for steady solutions of Navier-Stokes system is
to derive a differential inequality for the localized energy \cite{Horgan}. This approach was later refined in
 \cite{HW, LS, AP,Pileckas} and the book by Galdi \cite{Galdi}, etc.  The asymptotic behavior obtained via this method is also only for the solutions of steady Navier-Stokes system with small fluxes.  A significant open problem posed in \cite[p. 19]{Galdi} is the global well-posedness for Leray's problem in a general nozzle when the flux $\Phi$ is large.

Another approach to prove the convergence to Poiseuille flows of steady solutions for Navier-Stokes system is the blowup technique.
In fact, with the aid of the compactness obtained in \cite{LS},  global well-posedness for the Leray's problem in a general nozzle tending to a straight cylinder could be established even when the flux $\Phi$ is large, provided that we can prove global uniqueness or some Liouville type theorem for Poiseuille flows in the straight cylinder. In order to study the global uniqueness of Poiseuille flows in a straight cylinder, an important step is to prove the uniqueness in a bounded set in a suitable metric space.  The uniqueness of Hagen-Poiseuille flows in a uniformly small neighborhood (independent of the size of the flux) was obtained in \cite{WXX1}. More precisely, suppose that $\Omega=B_1(0)\times \mathbb{R}$ is a pipe and $\BF=(F^x, F^y, F^z)$ is external force, does the problem
\be \label{NS}
\left\{
\ba
& ( \Bu \cdot \nabla )\Bu -  \Delta \Bu + \nabla p = \BF,\,\,\,\,\text{in}\,\, {\Omega}, \\
& {\rm div}~\Bu = 0,\,\,\,\, \text{in}\,\, \Omega,
\ea
\right.
\ee
supplemented with no-slip boundary condition
\be \label{noslipBC}
\Bu = 0 \ \ \ \ \mbox{on}\ \partial \Omega
\ee
and the flux constraint
\be\label{fluxBC}
 \int_{B_1(0)} u^z(x, y, z) \, dx dy = \Phi
\ee
have a unique solution in the neighborhood of the Hagen-Poiseuille flows when the external force is small? The uniform nonlinear structural stability of Hagen-Poiseuille flows was established in \cite{WXX1} in the axisymmetric setting. It was proved in \cite{WXX1}  that the problem \eqref{NS}-\eqref{fluxBC} has a unique axisymmetric solution $\Bu$ satisfying
  \be \label{thmuniformnonlinear2}
\|\Bu - \bBU \|_{H^{\frac53} (\Omega)} \leq C \|\BF\|_{L^2(\Omega)}
\ee
and
\be \label{thmuniformnonlinear3}
\|\Bu - \bBU \|_{H^2(\Omega)} \leq C (1 + \Phi^{\frac14}) \|\BF\|_{L^2(\Omega)}.
\ee
when the $L^2-$norm of   $\BF$ is smaller than a uniform constant independent of the flux $\Phi$.

The main goal of this paper contains two parts. The first one is to show the existence and uniqueness of strong solutions for the problem \eqref{NS}-\eqref{fluxBC}, when $\BF$ is large in the case that the flux is large.
 The second goal in this paper is to investigate the convergence rate of steady solutions of Navier-Stokes system in a pipe which have finite $H^2$ distance to the Hagen-Poiseuille flows, even when the flows have large fluxes.

Our first main result is stated as follows.
\bt \label{mainresult1}
Assume that $\BF \in L^2(\Omega)$ and $\BF = \BF(r, z)$ is axisymmetric.
There exists a  constant $\Phi_0\geq 1$, such that
if
\be \label{assump1}
\Phi \geq \Phi_0\ \ \ \ \ \mbox{and}\ \ \ \ \|\BF\|_{L^2(\Omega)} \leq  \Phi^{\frac{1}{96}},
\ee
then the problem \eqref{NS}--\eqref{fluxBC} admits a unique axisymmetric solution $\Bu$ satisfying
\be \label{estuH2}
\|\Bu - \bBU \|_{H^{\frac{19}{12}}(\Omega)} \leq C_0 \Phi^{\frac{1}{96}} \ \ \ \ \mbox{and}\ \ \ \ \|u^r\|_{L^2(\Omega)} \leq \Phi^{-\frac{15}{32}},
\ee
where $C_0$ is a constant independent of $\Phi$ and $\BF$. Moreover, the solution $\Bu$ satisfies that
\be \label{estuH3}
\|\Bu - \bBU \|_{H^2(\Omega)} \leq C \Phi^{\frac{7}{24}}.
\ee
\et

We have the following remarks on Theorem \ref{mainresult1}.
\begin{remark}
The crucial point of
 Theorem \ref{mainresult1} is that the external force $\BF$ can be very large  when the flux of the flow is large.
\end{remark}

\begin{remark}
 If $\Phi$ is sufficiently large, $\BF =0$ and  $\|\Bu - \bBU \|_{H^{\frac{19}{12}}(\Omega)} \leq \Phi^{\frac{1}{96}}$, then $\Bu \equiv \bBU$. It means that $\bBU$ is the unique solution in a bounded set with large radius $\Phi^{\frac{1}{96}}$. This can be regarded as a step to get Liouville type theorem for steady Navier-Stokes system in a pipe.
\end{remark}

In case that $\BF$ has additional structure at far fields, we have  the following asymptotic behavior of solutions of Navier-Stokes system in a pipe.
\bt \label{mainresult2} Assume that $\BF \in H^1(\Omega)$ and $\BF = \BF(r, z)$ is axisymmetric. There exists a constant $\alpha_0$ depending only on $\Phi$, such that if $\BF = \BF(r, z)$ satisfies
\be \label{condF1}
\|e^{\alpha |z|} \BF\|_{L^2 (\Omega)} < + \infty,
\ee
with some $\alpha \in (0 , \, \alpha_0)$, and $\Bu$ is an axisymmetric solution to the problem \eqref{NS}--\eqref{fluxBC}, satisfying
\be \label{condu1}
\|\Bu - \bBU\|_{H^2(\Omega)} < + \infty,
\ee
then one has
\begin{equation}
\|e^{\alpha z} (\Bu-\bBU)\|_{H^2(\Omega\cap \{z\geq 0\})}+ \|e^{-\alpha z} (\Bu-\bBU)\|_{H^2(\Omega\cap \{z\leq 0\})} < + \infty.
\end{equation}
\et

There are a few remarks in order.

\begin{remark}
The key point of Theorem \ref{mainresult2} is that there is neither smallness assumption on the flux $\Phi$ nor the smallness on the deviation of $\Bu$ with $\bBU$.
\end{remark}

\begin{remark}
It follows from \eqref{thmuniformnonlinear3} and \eqref{estuH3} that the solutions obtained in \cite{WXX1} and Theorem \ref{mainresult1} satisfy the condition \eqref{condu1}. Hence if $\BF$  in \cite{WXX1} and Theorem \ref{mainresult1} also satisfies  \eqref{condF1}, then the associated solutions must converge to Hagen-Poiseuille flows exponentially fast.
\end{remark}

\begin{remark}
If $\BF$ decays to zero with an algebraic rate, i.e.,  $\BF$ satisfies
\be \label{condF2}
\| z^k \BF\|_{L^2 (\Omega)} < + \infty,
\ee
with some $k\in \mathbb{N}$, then under the condition \eqref{condu1}, using the same idea of the proof for Theorem \ref{mainresult2} yields that the  axisymmetric solution $\Bu$ of the problem \eqref{NS}--\eqref{fluxBC} converges to the Hagen-Poiseuille flows with the same algebraic rates, i.e., $\Bu$  satisfies
\begin{equation}
\| z^k (\Bu-\bBU)\|_{H^2(\Omega)} < + \infty.
\end{equation}
\end{remark}

\begin{remark}
The same asymptotic behavior also holds for the axisymmetric flows in semi-infinite pipes.
\end{remark}

The structure of this paper is as follows. In Section 2, we introduce the stream function formulation for  the linearized problem of axisymmetric Navier-Stokes system and recall the existence results obtained in \cite{WXX1}.  Some good estimates for the derivatives in the axial direction of the stream function and the swirl velocity are established in  Section \ref{someestimates}. These are the key ingredients to get the existence and uniqueness of large solutions when $\BF$ is large. The existence of solutions for the nonlinear problem is obtained via standard iteration  in Section \ref{secnonlinear} . Section \ref{sec-asymp} devotes to the study on the convergence rates of the flows at far fields, where the key observation is that $\Bu-\bBU$ must be small at far fields when the condition \eqref{condu1} holds so that the estimate for the linearized problem can be used. Some important inequalities are collected in Appendix \ref{Appendix}.


\section{Stream function formulation and existence results}\label{Linear}

To get the existence of the Navier-Stokes equations, we start from the following linearized system around Hagen-Poiseuille flows,
\be \label{2-0-1} \left\{ \ba
&\bBU \cdot \nabla \Bv + \Bv \cdot \nabla \bBU - \Delta \Bv + \nabla P = \BF, \ \ \ \mbox{in}\ \Omega, \\
& {\rm div}~\Bv = 0,\ \ \ \mbox{in}\ \Omega,
\ea
\right.
\ee
supplemented
with no-slip boundary conditions and the flux constraint,
\be \label{BC}
\Bv = 0\ \ \ \mbox{on}\ \partial \Omega,\quad \int_{B_1(0)} v^z(\cdot, \cdot, z)\, dS = 0\ \ \ \text{for any}\,\, z \in \mathbb{R}.
\ee

\subsection{Stream function formulation}\label{sec-stream}
In terms of the cylindrical coordinates, an axisymmetric solution $\Bv$ can be written as
\be \nonumber
\Bv = v^r (r, z) \Be_r + v^z (r, z) \Be_z +v^\theta (r, z) \Be_\theta.
\ee
Then the linearized equation for the Navier-Stokes system \eqref{NS} around  Hagen-Poiseuille flows can be written as
\be \label{2-0-1-1}
\left\{
\ba
& \bar U(r)  \frac{\partial v^r}{\partial z} + \frac{\partial P}{\partial r} -\left[ \frac{1}{r} \frac{\partial}{\partial r}\left(
r \frac{\partial v^r}{\partial r} \right) + \frac{\partial^2 v^r}{\partial z^2} - \frac{v^r}{r^2} \right] = F^r  \ \ \ \mbox{in}\ D ,\\
& v^r \frac{\partial \bar U }{\partial r} + \bar U(r)  \frac{\partial v^z}{\partial z} + \frac{\partial P}{\partial z}
- \left[ \frac{1}{r} \frac{\partial }{\partial r} \left( r \frac{\partial v^z}{\partial r}\right) + \frac{\partial^2 v^z}{\partial z^2} \right] = F^z \ \ \mbox{in}\ D ,                     \\
& \partial_r v^r + \partial_z v^z + \frac{v^r}{r} =0\ \ \ \mbox{in}\ D
\ea \right.
\ee
and
\be \label{vswirl}
\bar U(r)  \partial_z  v^\theta - \left[ \frac{1}{r} \frac{\partial }{\partial r} \left( r \frac{\partial v^\theta}{\partial r}\right) + \frac{\partial^2 v^\theta}{\partial z^2} - \frac{v^\theta}{r^2} \right] =  F^\theta \ \ \ \mbox{in}\ \ D.
\ee
Here $F^r$, $F^z$, and $F^\theta$ are the radial, axial, and azimuthal component of $\BF$, respectively, and $D=\{(r, z): r\in (0, 1), z\in \mathbb{R}\}$.
The no-slip boundary conditions and the flux constraint \eqref{BC}  become
\be \label{BC-1}
v^r(1, z)  = v^z(1, z) = 0,\quad  \int_0^1 r v^z(r, z)\, dr = 0,
\ee
and
\begin{equation}\label{BC-swirl}
v^\theta (1, z) = 0.
\end{equation}

It follows from  the third equation in \eqref{2-0-1-1} that there exists a stream function $\psi(r, z)$ satisfying
\be \label{2-0-4}
v^r =  \partial_z \psi \ \ \text{and} \ \ v^z = - \frac{\partial_r (r \psi) }{r}.
\ee
The azimuthal vorticities of $\Bv$ and $\BF$ are defined as
\be \nonumber
\omega^\theta= \partial_z v^r - \partial_r v^z= \frac{\partial }{\partial r}  \left(  \frac1r \frac{\partial }{\partial r} (r \psi) \right) + \partial_z^2 \psi \ \ \ \ \mbox{and}\ \ \ \ f= \partial_z F^r - \partial_r F^z,
\ee
respectively.
It follows from the first two equations in \eqref{2-0-1-1} that
\be \label{2-0-2}
\bar U(r)  \partial_z \omega^\theta - \left(\partial_r^2 + \partial_z^2 + \frac{1}{r}  \partial_r \right)\omega^\theta
+ \frac{\omega^\theta}{r^2}  = f.
\ee
Denote
\be \nonumber
\mathcal{L} = \frac{\partial}{\partial r} \left(  \frac1r \frac{\partial}{\partial r}(r \cdot)      \right) = \frac{\partial^2}{\partial r^2} + \frac1r \frac{\partial}{\partial r} - \frac{1}{r^2}.
\ee
Hence $\omega^\theta = (\mathcal{L} + \partial_z^2) \psi$ and  $\psi$ satisfies the following fourth order equation,
\be \label{2-0-4-1}
\bar U(r)  \partial_z( \mathcal{L} + \partial_z^2) \psi -
( \mathcal{L} + \partial_z^2)^2 \psi = f.
\ee

 Next, we derive the boundary conditions for $\psi$. As discussed in \cite{Liu-Wang}, in order to get classical solutions, some compatibility conditions at the axis should be imposed. Assume that $\Bv$ and the vorticity $\Bo$ are continuous so that $v^r(0, z)$ and $\omega^\theta(0, z)$ should vanish. This implies
\be \nonumber
\partial_z \psi(0, z) = (\mathcal{L} + \partial_z^2)\psi (0, z) = 0.
\ee
Without loss of generality, one can  assume that $\psi(0, z) = 0$. Hence, the following compatibility condition holds at the axis,
\be \label{2-0-4-2}
\psi(0, z) = \mathcal{L} \psi(0, z) = 0.
\ee
On the other hand, it follows from \eqref{BC-1} that
\be \nonumber
\int_0^1 \partial_r (r \psi ) (r, z)\, dr =-  \int_0^1 r v^z \, dr = 0.
\ee
This, together with \eqref{2-0-4-2}, gives
\be \label{2-0-4-3}
\psi(1, z) = \lim_{r\rightarrow 0+ } ( r \psi)  (r, z)  = 0.
\ee
Moreover, according to the homogeneous boundary conditions for $\Bv$, one has
\be \nonumber
\frac{\partial}{\partial r}(r \psi)  |_{r=1} = r v^z  |_{r=1} = 0.
\ee
This, together with \eqref{2-0-4-3}, implies
\be \label{2-0-4-4}
\frac{\partial \psi}{\partial r}(1, z) = 0.
\ee

Meanwhile, if $\Bv$ is continuous, then the compatibility conditions for $\Bv$ obtained in \cite{Liu-Wang} implies  $v^\theta (0, z) = 0$. Hence  $v^\theta$ satisfies the following problem
\be \label{swirlsystem}
\left\{   \ba
& \bar U(r)  \partial_z  v^\theta - \left[ \frac{1}{r} \frac{\partial }{\partial r} \left( r \frac{\partial v^\theta}{\partial r}\right) + \frac{\partial^2 v^\theta}{\partial z^2} - \frac{v^\theta}{r^2} \right] =  F^\theta \ \ \ \mbox{in}\ \ D, \\
& v^\theta(1, z)= v^\theta(0, z) = 0.
\ea
\right.
\ee

Now let us introduce some notations and recall the existence results for $\psi$ and $\Bv^\theta$ obtained in \cite{WXX1}. For a given function $g(r, z)$, define its Fourier transform with respect to $z$ variable by
\be  \nonumber
\hat{g}(r, \xi) = \int_{\mathbb{R}^1} g(r, z) e^{-i \xi z} dz.
\ee
 $\Re g$ and $\Im g$  denote the real and imaginary part of a function or a number $g$, respectively.

\begin{definition}\label{def1}
Define a function space $C_*^\infty(D)$ as follows
\be \nonumber
C_*^\infty(D )  = \left\{ \varphi(r, z) \left|  \ba & \ \varphi \in C_c^\infty([0, 1] \times \mathbb{R} ), \
\varphi(1, z) = \frac{\partial \varphi}{\partial r}(1, z) =0, \\
&\ \  \   \mbox{and}\ \lim_{r\rightarrow 0+ } \mathcal{L}^k \varphi(r, z)
= \lim_{r \rightarrow 0+ } \frac{\partial}{\partial r} ( r \mathcal{L}^k \varphi ) (r, z) = 0, \ k \in \mathbb{N}
\ea \right.
\right\}.
 \ee
The $H_r^3(D)$-norm and  $H_r^4(D)$-norm are defined as follows,
\be \label{5-100} \ba
\| \varphi\|_{H_r^3(D)} : = & \int_{-\infty}^{+\infty} \int_0^1 \left[ \left| \frac{\partial}{\partial r }( r \mathcal{L} \hat{\varphi})  \right|^2 \frac1r  + \xi^2 |\mathcal{L} \hat{\varphi} |^2 r + \xi^4 \left| \frac{\partial}{\partial r} (r \hat{\varphi} ) \right|^2 \frac1r + \xi^6 |\hat{\varphi} |^2 r  \right] \, dr \\
&\ \ + \int_{-\infty}^{+\infty} \int_0^1  \left[  |\mathcal{L} \hat{\varphi} |^2 r  + \xi^2 \left| \frac{\partial}{\partial r} ( r \hat{\varphi})  \right|^2 \frac{1}{r}  + \xi^4 |\hat{\varphi}|^2 r \right] \, dr \\
&\ \ + \int_{-\infty}^{+\infty} \int_0^1 \left[ \left|  \frac{\partial}{\partial r} (r \hat{\varphi}) \right|^2 \frac1r + \xi^2 |\hat{\varphi}|^2 r + |\hat{\varphi}|^2 r      \right] \, dr .
\ea
\ee
$H_*^3(D)$  denotes the closure of $C_*^\infty(D)$ under the $H_r^3(D)$-norm.
Furthermore, $L_r^2(D)$ is the completion of $C^\infty(D)$ under the $L_r^2(D)$-norm defined as follows
\be \nonumber
 \ \|f\|_{L_r^2(D )}^2 = \int_{-\infty}^{+\infty} \int_0^1  |f|^2  r \, dr dz .
\ee
\end{definition}

The existence of solutions for the problems \eqref{2-0-4-1}--\eqref{2-0-4-4} and \eqref{swirlsystem} has been established in \cite{WXX1}.
\begin{pro} \label{existence-stream} \cite[Theorem 1.1]{WXX1} Assume that  $\BF^* = F^r \Be_r + F^z \Be_z  \in L^2 (\Omega)$ and $\BF^*$ is axisymmetric. There exists a unique solution $\psi \in H_*^3(D) $ to the linear system \eqref{2-0-4-1}--\eqref{2-0-4-4},  and a positive constant $C_1$ independent of $\BF^* $ and $\Phi$, such that
\begin{equation}\nonumber
\| \Bv^* \|_{H^{\frac53}(\Omega)} \leq C_1 \|\BF^*\|_{L^2(\Omega)},
\end{equation}
and
\be \nonumber
\|\Bv^* \|_{H^2(\Omega)} \leq C_1 (1 + \Phi^{\frac14} ) \|\BF^* \|_{L^2(\Omega)},
\ee
where $\Bv^* = v^r \Be_r + v^z \Be_z = \partial_z \psi \Be_r - \frac{\partial_r (r\psi)}{r} \Be_z . $
\end{pro}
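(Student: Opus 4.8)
The plan is to reduce \eqref{2-0-4-1}--\eqref{2-0-4-4} to a family of ordinary differential equations by Fourier transform in $z$ and then to run weighted energy estimates frequency by frequency. Since $\bar U(r)$ is independent of $z$, writing $\what\psi(r,\xi)$ for the transform of $\psi$ and using $\widehat{\partial_z^2\psi}=-\xi^2\what\psi$ turns \eqref{2-0-4-1} into
\be \nonumber
i\xi\,\bar U(r)\,(\mathcal{L}-\xi^2)\what\psi-(\mathcal{L}-\xi^2)^2\what\psi=\what f(r,\xi),\qquad r\in(0,1),
\ee
for each fixed $\xi\in\mathbb{R}$, with $\what\psi(1,\xi)=\partial_r\what\psi(1,\xi)=0$ coming from \eqref{2-0-4-3}--\eqref{2-0-4-4} and the axis conditions $\what\psi(0,\xi)=\mathcal{L}\what\psi(0,\xi)=0$ coming from \eqref{2-0-4-2}. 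For each $\xi$ I would solve this boundary value problem variationally in the completion of $C_*^\infty$ (restricted to an $r$-slice) under the norm induced by \eqref{5-100}, obtaining existence from the Lax-Milgram theorem once the associated nonsymmetric $\xi$-dependent bilinear form is shown to be coercive; the coercivity bound is exactly the a priori estimate described below. Two preliminary points need attention: the weight $r\,dr$ and the singularity of $\mathcal{L}$ at the axis, which is where the compatibility conditions built into $C_*^\infty(D)$ and Hardy-type inequalities of the kind collected in Appendix~\ref{Appendix} are needed so that the integrations by parts produce no boundary contribution at $r=0$; and the fact that $\what f=i\xi\what{F^r}-\partial_r\what{F^z}$ carries one derivative of $\BF^*$, so the forcing term has to be paired with the test function and integrated by parts once, which returns it to a pairing of $\BF^*$ with $\Bv^*$ and leaves only $\|\BF^*\|_{L^2(\Omega)}$, with no derivatives, on the right-hand side.

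The heart of the argument is a family of weighted energy estimates, uniform in $\xi$, obtained by testing the ODE against $\overline{\what\psi}$, against $\overline{(\mathcal{L}-\xi^2)\what\psi}$, and against suitable $r$-weighted variants, and integrating over $(0,1)$ against $r\,dr$. I expect the profile \eqref{HPflow} to enter decisively in two ways. First, $\bar U(1)=0$, so all boundary terms at $r=1$ generated when the convection term is integrated by parts drop out; after that integration by parts the term carrying the large factor $\Phi$ can be rewritten so that it is of strictly lower differential order or enters with a favorable sign --- this is the ``good effect of the convection term.'' Second, because $\bar U$ is quadratic in $r$, several pieces of the convection term cancel identically, and this is what should eventually allow a flux-independent bound for a low-order quantity, in contrast with the naive energy estimate applied directly to \eqref{2-0-1}, where the stretching term $\Bv\cdot\nabla\bBU$ has size $O(\Phi)$ and ruins any flux-uniform estimate. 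For $|\xi|$ large the dissipation $(\mathcal{L}-\xi^2)^2$ controls everything and the estimates are routine; the hard regime is $|\xi|$ small or moderate, where $\bar U\sim\Phi$ is not dominated by the dissipation, and there a more careful multiplier, using both the real and the imaginary parts of the tested equation, will be needed. Given a flux-independent low-order bound together with a top-order ($H^2$-type) bound that costs a positive power of $\Phi$, I would then interpolate: the $H^{5/3}$-type norm is controlled with a constant independent of $\Phi$, and optimizing the interpolation exponent produces the weaker factor $1+\Phi^{\frac14}$ at the $H^2$ level.

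Finally I would integrate the $\xi$-uniform estimates in $\xi$ and apply Plancherel's theorem. By Definition~\ref{def1} and the identities \eqref{2-0-4}, control of the $H_r^3(D)$-norm of $\what\psi$ is equivalent to control of $\|\Bv^*\|_{H^{5/3}(\Omega)}$ and $\|\Bv^*\|_{H^2(\Omega)}$ for $\Bv^*=\partial_z\psi\,\Be_r-\frac{1}{r}\partial_r(r\psi)\,\Be_z$, which gives the two stated inequalities with $C_1$ depending on neither $\BF^*$ nor $\Phi$; uniqueness follows by applying the same coercive estimate to the difference of two solutions, for which the forcing vanishes. The main obstacle, and what I expect to be the real content, is the low-frequency, large-flux a priori estimate --- making the convective structure beat the large coefficient $\Phi$ at small $\xi$; the rest is the standard scheme of Fourier reduction followed by weighted energy estimates for a fourth order elliptic problem in the strip $(0,1)\times\mathbb{R}$.
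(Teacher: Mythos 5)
This proposition is not proved in the present paper at all: it is quoted verbatim from \cite[Theorem 1.1]{WXX1}, and the only traces of its proof visible here are the Fourier-transformed equation \eqref{2-0-8}--\eqref{FBC}, the $\xi$-uniform a priori bound of Proposition \ref{propcase0}, and the Hardy/weighted-interpolation lemmas of Appendix \ref{Appendix}. Your overall framework --- Fourier transform in $z$, a per-frequency boundary value problem for $\hat\psi$ on $(0,1)$ with the compatibility conditions at the axis, weighted energy estimates exploiting $\bar U(1)=0$ and the weight $(1-r^2)/r$, then Plancherel --- is indeed the scheme those fragments point to, so the skeleton is right.

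There is, however, a concrete gap in how you propose to reach the two stated inequalities. You claim the flux-independent $H^{5/3}$ bound follows ``by interpolation'' between a flux-independent low-order bound and a top-order bound that costs a positive power of $\Phi$. That is arithmetically impossible: interpolating $\|\Bv^*\|_{X_0}\leq C\|\BF^*\|_{L^2}$ with $\|\Bv^*\|_{H^2}\leq C\Phi^{1/4}\|\BF^*\|_{L^2}$ at any nontrivial intermediate exponent $\theta\in(0,1]$ leaves a factor $\Phi^{\theta/4}$, so no interpolation of the two displayed estimates can produce a constant independent of $\Phi$ at the $H^{5/3}$ level. (In this paper the interpolation goes the other way: $H^{19/12}$ is obtained in \eqref{4-22} by interpolating \emph{between} $L^2$ and the already-established flux-uniform $H^{5/3}$ bound.) The exponent $5/3$ has to emerge from the frequency-by-frequency analysis itself --- a bound of the schematic form ``$|\xi|^{a}\times(\text{energy})\lesssim \Phi^{-b}\|\hat F\|^2$'' obtained from the convection term via Lemmas \ref{lemmaHLP} and \ref{weightinequality}, traded against the powers of $\xi$ needed for the extra $2/3$ derivative beyond Proposition \ref{propcase0} --- and this is exactly the low-frequency, large-flux estimate that you yourself flag as ``the real content'' and leave unexecuted (``a more careful multiplier \dots will be needed''). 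Since both the choice of multipliers in that regime and the mechanism producing the specific thresholds $5/3$ and $\Phi^{1/4}$ are missing, the proposal is a correct plan of attack but not a proof of either displayed inequality.
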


\begin{pro}\label{existence-swirl}\cite[Proposition 4.6]{WXX1}
Assume that $\BF^\theta = F^\theta \Be_\theta \in L^2(\Omega)$ and $\BF^\theta = \BF^\theta (r, z)$ is axisymmetric. There exist a unique solution $v^\theta $ to the linear problem \eqref{swirlsystem} and a positive constant $C_2$ independent of $F^\theta$ and $\Phi$, such that
\be \nonumber
\|\Bv^\theta \|_{H^2(\Omega)} \leq C_2 \|\BF^\theta \|_{L^2(\Omega)},
\ee
where $\Bv^\theta = v^\theta \Be_\theta.$
\end{pro}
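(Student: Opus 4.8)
\emph{Sketch of proof.} The plan is to decouple the equation in the axial frequency and treat a family of ordinary differential equations. Taking the Fourier transform of \eqref{swirlsystem} in $z$, the function $\what{v^\theta}(r,\xi)$ solves, for each fixed $\xi\in\mathbb{R}$,
\be \nonumber
i\xi\,\bar U(r)\,\what{v^\theta} - \mathcal{L}\what{v^\theta} + \xi^2\,\what{v^\theta} = \what{F^\theta},\qquad \what{v^\theta}(0,\xi)=\what{v^\theta}(1,\xi)=0,
\ee
with $\mathcal{L}$ the operator introduced in Section~\ref{Linear}. Existence and uniqueness of $\what{v^\theta}(\cdot,\xi)$ would follow from Lax--Milgram on the space of functions that vanish at $r=1$ and for which $\pa_r\what{v^\theta}$ and $\what{v^\theta}/r$ belong to $L_r^2(0,1)$: the sesquilinear form has coercive real part $\int_0^1(|\pa_r\phi|^2+|\phi|^2/r^2+\xi^2|\phi|^2)\,r\,dr$, which comes entirely from the diffusive part, while the convection term contributes $i\xi\int_0^1\bar U|\phi|^2\,r\,dr$, which is purely imaginary because $\bar U$ is real and therefore neither helps nor hurts coercivity. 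Uniqueness is a consequence of the same coercivity (a solution with $\BF^\theta=0$ has vanishing energy at every $\xi$), and Plancherel in $\xi$ reconstitutes $v^\theta$.

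The quantitative bounds are obtained frequency by frequency and then integrated in $\xi$. Testing against $\overline{\what{v^\theta}}$ and taking real parts annihilates the convection term (once again $i\xi\int\bar U|\what{v^\theta}|^2 r\,dr$ is imaginary), and, after using the one--dimensional Hardy and Poincar\'e inequalities of Appendix~\ref{Appendix}, yields
\be \nonumber
\|\pa_r\what{v^\theta}\|_{L_r^2}+\|\what{v^\theta}/r\|_{L_r^2}+|\xi|\,\|\what{v^\theta}\|_{L_r^2}\lesssim\|\what{F^\theta}(\cdot,\xi)\|_{L_r^2},
\ee
with a constant independent of $\Phi$; testing against $\xi^2\overline{\what{v^\theta}}$ upgrades this to $\|\xi\pa_r\what{v^\theta}\|_{L_r^2}+\|\xi^2\what{v^\theta}\|_{L_r^2}\lesssim\|\what{F^\theta}(\cdot,\xi)\|_{L_r^2}$. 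To reach the full $H^2(\Omega)$ norm of $\Bv^\theta=v^\theta\Be_\theta$ it then only remains to bound the purely radial second--order quantities $\|\pa_r^2\what{v^\theta}\|_{L_r^2}$, $\|\pa_r\what{v^\theta}/r\|_{L_r^2}$ and $\|\what{v^\theta}/r^2\|_{L_r^2}$; by the Bessel--type elliptic regularity for $\mathcal{L}$ collected in Appendix~\ref{Appendix}, these are controlled by $\|\mathcal{L}\what{v^\theta}\|_{L_r^2}$ together with the lower--order terms already estimated, so the whole matter reduces to an estimate for $\|\mathcal{L}\what{v^\theta}\|_{L_r^2}$ by $\|\what{F^\theta}(\cdot,\xi)\|_{L_r^2}$ with a $\Phi$--independent constant. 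Plancherel in $\xi$ would then assemble $\|\Bv^\theta\|_{H^2(\Omega)}\le C_2\|\BF^\theta\|_{L^2(\Omega)}$.

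The hard part is exactly this last estimate. Rewriting the equation as $\mathcal{L}\what{v^\theta}=i\xi\bar U\what{v^\theta}+\xi^2\what{v^\theta}-\what{F^\theta}$ and estimating term by term costs a factor $\|\bar U\|_{L^\infty}\sim\Phi$ from the convection term, which is not admissible since $C_2$ must be flux--independent, and the cancellation that eliminated convection at lower order does not occur against the multiplier $\mathcal{L}\overline{\what{v^\theta}}$. The way around this is to exploit that $\bar U(r)=\tfrac{2\Phi}{\pi}(1-r^2)$ vanishes to first order at $r=1$, precisely where $\what{v^\theta}$ also vanishes. Concretely I would run weighted energy identities, testing the equation against $\overline{\what{v^\theta}}(1-r^2)^k$ for suitable $k$ (for which the convection contribution stays purely imaginary), so that the factor $\Phi=\|\bar U\|_{L^\infty}$ is traded against the extra smallness of $(1-r^2)^{k/2}\what{v^\theta}$ that these very identities force, controlling the weighted gradients through the Hardy inequalities of Appendix~\ref{Appendix}; and, if necessary, split the frequency ranges $|\xi|\lesssim\Phi^{-1}$, $\Phi^{-1}\lesssim|\xi|\lesssim\Phi$ and $|\xi|\gtrsim\Phi$, since in the two outer ranges the diffusion $\mathcal{L}+\xi^2$ already dominates the convection. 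Making this chain of weighted estimates produce a constant genuinely independent of $\Phi$ is the technical core; the rest is bookkeeping with Plancherel and the weighted Sobolev equivalences of Appendix~\ref{Appendix}.
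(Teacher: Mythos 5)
Your reduction is sound as far as it goes: Fourier transform in $z$, existence and uniqueness of $\what{v^\theta}(\cdot,\xi)$ by Lax--Milgram (the convection contribution $i\xi\int_0^1\bar U|\phi|^2 r\,dr$ is indeed purely imaginary, so coercivity comes from the diffusion alone), the first-order and $\xi$-weighted bounds by taking real parts of the energy identity, and the passage from $\|(\mathcal{L}-\xi^2)\what{v^\theta}\|_{L_r^2}$ to the full $H^2(\Omega)$ norm of $v^\theta\Be_\theta$ via $\Delta(v^\theta\Be_\theta)=(\mathcal{L}+\partial_z^2)v^\theta\,\Be_\theta$ and elliptic regularity (note, though, that Appendix \ref{Appendix} contains no ``Bessel-type elliptic regularity'' lemma; the correct reference is the standard Cartesian argument used in Lemma \ref{newlemma}). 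You have also correctly located the crux: bounding $\|\mathcal{L}\what{v^\theta}\|_{L_r^2}$ by $\|\what{F^\theta}(\cdot,\xi)\|_{L_r^2}$ with a constant independent of $\Phi$, which fails if one simply reads $\mathcal{L}\what{v^\theta}$ off the equation (the term $i\xi\bar U\what{v^\theta}$ costs a factor $\Phi$), and for which testing against $r\,\overline{\mathcal{L}\what{v^\theta}}$ leaves an uncancelled term of order $\Phi|\xi|$ coming from $\bar U'$.

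The gap is that this crux is exactly where your argument stops. The paragraph proposing weighted multipliers $(1-r^2)^k$ and a splitting into the frequency ranges $|\xi|\lesssim\Phi^{-1}$, $\Phi^{-1}\lesssim|\xi|\lesssim\Phi$, $|\xi|\gtrsim\Phi$ is a plausible and, in spirit, correct strategy --- it is essentially how the cited reference proceeds, using the vanishing of $\bar U$ at $r=1$ through the weighted Hardy and interpolation inequalities of Lemmas \ref{lemmaHLP} and \ref{weightinequality} --- but you do not carry out any of these weighted identities, and you acknowledge that making the constants close uniformly in $\Phi$ is ``the technical core.'' That core is precisely the non-trivial content of the proposition: the frequency-wise estimate you need is Proposition \ref{uniform-swirl} (quoted in Section \ref{someestimates} from the companion paper), and this paper itself does not reprove it but imports it. If you are permitted to invoke Proposition \ref{uniform-swirl} as a black box, your sketch assembles into a complete proof; as a self-contained argument it is missing its decisive estimate, so it cannot be accepted as a proof in its present form.
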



\section{Some Refined estimates for solutions of Linearized problem}\label{someestimates}

Propositions \ref{existence-stream}-\ref{existence-swirl} provide some uniform estimates for $\psi$ and $\Bv^\theta$. They are the key ingredients to get the existence and uniqueness of solutions of the steady Navier-Stokes system, when $\BF$ is uniformly small\cite{WXX1}. In this section, we give some refined estimates, especially for the $z$-derivatives of $\psi$ and $\Bv^\theta$, which yield the existence of large solutions of steady Navier-Stokes system even when the external force is large. These estimates also show the stabilizing effect of the linearized convection term when $\Phi$ is large.

We take the Fourier transform with respect to $z$ for the equation \eqref{2-0-4-1} . For every fixed $\xi$, $\hat{\psi}$ satisfies
\be \label{2-0-8}
i\xi \bar{U}(r) ( \mathcal{L} - \xi^2) \hat{\psi}  - ( \mathcal{L} - \xi^2 )^2 \hat{\psi} = \hat{f}= i \xi \widehat{F^r} - \frac{d}{dr} \widehat{F^z}.
\ee
Furthermore, the boundary conditions \eqref{2-0-4-2}-\eqref{2-0-4-4} can be written as
\be \label{FBC}
 \hat{\psi}(0) = \hat{\psi} (1) = \hat{\psi}^{\prime} (1) =  \mathcal{L} \hat{\psi}(0) = 0.
\ee


First, let us recall the a priori estimates obtained in \cite{WXX1} which hold for every $\xi \in \mathbb{R}$.
\begin{pro}\label{propcase0}\cite[Section 6]{WXX1}
Let $\hat{\psi}(r, \xi)$ be a smooth solution of the problem \eqref{2-0-8}--\eqref{FBC}. Then it holds
\be
\int_0^1 |\mathcal{L} \hat{\psi}|^2 r \, dr + \xi^2 \int_0^1 \left| \frac{d}{dr} (r \hat{\psi} ) \right|^2 \frac1r \, dr + \xi^4 \int_0^1 |\hat{\psi}|^2 r \, dr
\leq C \int_0^1 ( |F^r|^2 + |F^z|^2 ) r \, dr.
\ee
\end{pro}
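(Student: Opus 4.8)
The plan is to reduce everything to a single weighted energy identity for the fourth--order ODE \eqref{2-0-8}, and then to absorb the convection contribution into the dissipation through a frequency--dependent argument.

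First I would multiply \eqref{2-0-8} by $r\overline{\hat\psi}$ and integrate over $r\in(0,1)$. By the boundary conditions \eqref{FBC} the operator $\mathcal{L}-\xi^2$ is symmetric for the inner product $\langle f,g\rangle=\int_0^1 f\,\bar g\,r\,dr$ and every boundary term produced by the integrations by parts drops out, so the dissipation term becomes $\int_0^1|(\mathcal{L}-\xi^2)\hat\psi|^2\,r\,dr$, which, using $\hat\psi(0)=\hat\psi(1)=0$, equals $\int_0^1|\mathcal{L}\hat\psi|^2 r\,dr+2\xi^2\int_0^1|\partial_r(r\hat\psi)|^2 r^{-1}\,dr+\xi^4\int_0^1|\hat\psi|^2 r\,dr$. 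For the convection term, integrating by parts once in $\mathcal{L}$ and using that $\bar{U}$ is real together with $\hat\psi(0)=\hat\psi(1)=0$, the contributions $\int\bar{U}|\partial_r(r\hat\psi)|^2/r$ and $\int\bar{U}|\hat\psi|^2 r$ are purely imaginary after multiplication by $i\xi$, so its real part is $\xi\,\Im\int_0^1\bar{U}'(r)\,\overline{\hat\psi}\,\partial_r(r\hat\psi)\,dr$. Taking real parts of \eqref{2-0-8} tested against $r\overline{\hat\psi}$ therefore yields
\be \nonumber \ba
\int_0^1|\mathcal{L}\hat\psi|^2 r\,dr&+2\xi^2\int_0^1|\partial_r(r\hat\psi)|^2\frac1r\,dr+\xi^4\int_0^1|\hat\psi|^2 r\,dr \\
&=\xi\,\Im\int_0^1\bar{U}'(r)\,\overline{\hat\psi}\,\partial_r(r\hat\psi)\,dr-\Re\int_0^1\hat f\,\overline{\hat\psi}\,r\,dr ,
\ea \ee
whose left-hand side already dominates the quantity in the statement; it remains to bound the right-hand side by $C\int_0^1(|F^r|^2+|F^z|^2)\,r\,dr$.

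The forcing term is routine: with $\hat f=i\xi\widehat{F^r}-\partial_r\widehat{F^z}$, integrating the $\widehat{F^z}$ piece by parts (the boundary terms vanish since $\hat\psi(0)=\hat\psi(1)=0$) and applying Cauchy--Schwarz in the weight $r$ bounds $|\Re\int_0^1\hat f\,\overline{\hat\psi}\,r\,dr|$ by $|\xi|\,\|\widehat{F^r}\|\,\|\hat\psi\|+\|\widehat{F^z}\|\,\|\partial_r(r\hat\psi)\,r^{-1/2}\|$ in the obvious weighted $L^2(0,1)$ norms. The Hardy and Poincar\'e type inequalities of Appendix \ref{Appendix}, which give $\int_0^1|\hat\psi|^2 r\,dr+\int_0^1|\partial_r(r\hat\psi)|^2 r^{-1}\,dr\le C\int_0^1|\mathcal{L}\hat\psi|^2 r\,dr$ and $\xi^2\int_0^1|\hat\psi|^2 r\,dr\le\int_0^1|\hat\psi|^2 r\,dr+\xi^4\int_0^1|\hat\psi|^2 r\,dr$, together with Young's inequality, then absorb this term into a small multiple of the left-hand side above plus $C\int_0^1(|F^r|^2+|F^z|^2)\,r\,dr$.

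The real work is the convection term. From \eqref{HPflow} one has $\bar{U}'(r)=-\tfrac{4\Phi}{\pi}r$, and since $\Im\big(r\,\overline{\hat\psi}\,\partial_r(r\hat\psi)\big)=r^2\,\Im(\overline{\hat\psi}\,\hat\psi')$, the convection term equals $-\tfrac{4\Phi}{\pi}\,\xi\int_0^1 r^2\,\Im(\overline{\hat\psi}\,\hat\psi')\,dr$, hence is bounded by $C\Phi|\xi|\big(\int_0^1|\hat\psi|^2 r\,dr\big)^{1/2}\big(\int_0^1|\partial_r(r\hat\psi)|^2 r^{-1}\,dr\big)^{1/2}$. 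For $|\xi|$ large compared with a fixed power of $\Phi$ (one checks $|\xi|\gtrsim\Phi^{1/2}$ suffices), Young's inequality absorbs this into $\xi^2\int_0^1|\partial_r(r\hat\psi)|^2 r^{-1}\,dr+\xi^4\int_0^1|\hat\psi|^2 r\,dr$; for $|\xi|\lesssim\Phi^{-1}$, the Poincar\'e inequality turns it into $C\Phi|\xi|\int_0^1|\mathcal{L}\hat\psi|^2 r\,dr$, again absorbable. The intermediate band $\Phi^{-1}\lesssim|\xi|\lesssim\Phi^{1/2}$ is the main obstacle: neither convection nor dissipation dominates, a crude absorption fails, and one must use extra information. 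I would invoke the identity coming from the imaginary part of the same tested equation,
\be \nonumber
\xi\int_0^1\bar{U}(r)\left(\frac{|\partial_r(r\hat\psi)|^2}{r}+\xi^2 r|\hat\psi|^2\right)dr=-\Im\int_0^1\hat f\,\overline{\hat\psi}\,r\,dr ,
\ee
in which the cancellation $\Re\int_0^1\bar{U}'(r)\,\overline{\hat\psi}\,\partial_r(r\hat\psi)\,dr=0$, special to the quadratic profile \eqref{HPflow}, has been used. Since $\bar{U}(r)=\tfrac{2\Phi}{\pi}(1-r^2)\ge\tfrac{3\Phi}{2\pi}$ on $\{0\le r\le 1/2\}$, this shows that $\int_0^{1/2}|\partial_r(r\hat\psi)|^2 r^{-1}\,dr+\xi^2\int_0^{1/2}|\hat\psi|^2 r\,dr$ is $O\big((\Phi|\xi|)^{-1}\big)$ times the forcing, so the profile is already small away from the wall; near $r=1$, where this convection weight degenerates, the biharmonic--type dissipation $(\mathcal{L}-\xi^2)^2$ is the leading operator, and a cut-off argument together with the one-dimensional Sobolev and Hardy inequalities of Appendix \ref{Appendix} closes the estimate on the thin boundary layer. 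Patching the interior and boundary-layer bounds handles this regime, and combining all three frequency ranges and moving the absorbed small multiples to the left yields the claimed estimate with a fixed constant $C$ (the genuinely nontrivial point being its independence of $\Phi$). The step I expect to be hardest is precisely the intermediate-frequency analysis of the convection term, which is the delicate content of \cite[Section 6]{WXX1}.
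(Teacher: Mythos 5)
This proposition is not proved in the paper at all: it is quoted verbatim from \cite[Section 6]{WXX1}, so there is no in-paper argument to compare against. Judged on its own terms, your proposal gets the skeleton right and it is consistent with the identities the paper does use later (your real-part energy identity, and your imaginary-part identity with the cancellation $\Re\int_0^1\bar U'(r)\,\overline{\hat\psi}\,\partial_r(r\hat\psi)\,dr=0$ for the quadratic profile, is exactly \eqref{4-14} and \eqref{4-33}). The treatment of the forcing term and of the two extreme frequency ranges $|\xi|\gtrsim \Phi^{1/2}$ and $|\xi|\lesssim\Phi^{-1}$ is correct.

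The genuine gap is the intermediate band $\Phi^{-1}\lesssim|\xi|\lesssim\Phi^{1/2}$, which you correctly identify as the crux but do not actually prove: the sentence ``a cut-off argument together with the one-dimensional Sobolev and Hardy inequalities closes the estimate on the thin boundary layer'' is an assertion, not an argument, and the obvious implementation fails. Concretely, split $[0,1]$ at $r=1-\delta$. Near the wall, $\hat\psi(1)=\hat\psi'(1)=0$ give $|\hat\psi|\lesssim(1-r)^{3/2}\|\hat\psi''\|_{L^2}$ and $|\hat\psi'|\lesssim(1-r)^{1/2}\|\hat\psi''\|_{L^2}$, so the wall contribution to the convection term is $\lesssim\Phi|\xi|\,\delta^{3}\,\|\hat\psi''\|_{L^2(1-\delta,1)}^2$, which forces $\delta\lesssim(\Phi|\xi|)^{-1/3}$ to be absorbed by $\|\mathcal{L}\hat\psi\|_{L^2_r}^2$. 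On the complementary region your imaginary-part identity gives, since $\bar U\gtrsim\Phi\delta$ there,
\be \nonumber
\int_0^{1-\delta}\frac{|\partial_r(r\hat\psi)|^2}{r}\,dr+\xi^2\int_0^{1-\delta}|\hat\psi|^2 r\,dr\;\lesssim\;\frac{1}{\Phi\,\delta\,|\xi|}\left|\Im\int_0^1\hat f\,\overline{\hat\psi}\,r\,dr\right|,
\ee
and feeding this back into the convection term produces a factor $\Phi|\xi|\cdot(\Phi\delta|\xi|^2)^{-1}=(\delta|\xi|)^{-1}\sim\Phi^{1/3}|\xi|^{-2/3}$, which is bounded only when $|\xi|\gtrsim\Phi^{1/2}$ --- i.e.\ precisely outside the band you are trying to treat. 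So the patching of interior and boundary-layer bounds does not close with these tools alone; the uniform-in-$\Phi$ constant in the intermediate regime is the substantive content of \cite[Section 6]{WXX1} and requires additional multipliers/boundary-layer analysis beyond what is sketched here. As written, the proposal defers the essential step to the reference rather than proving it.
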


The next two propositions give some further estimates for $\psi$, especially the $z$-derivatives of $\psi$.
\begin{pro} \label{further-estimate-1}
Assume that $\BF^r = F^r \Be_r \in L^2(\Omega)$, the solution $\psi$ of the problem
\be \label{4-11}
\left\{  \ba & \bar{U}(r) \partial_z (\mathcal{L} + \partial_z^2) \psi - (\mathcal{L} + \partial_z^2 )^2 \psi = \partial_z F^r, \\
& \psi(0) = \psi(1) = \frac{\partial}{\partial r } \psi(1) = \mathcal{L} \psi (0) = 0,
\ea
\right.
\ee
satisfies
\be \nonumber
\|\Bv^* \|_{L^2(\Omega)} \leq C \Phi^{-\frac23} \|\BF^r\|_{L^2(\Omega)} \ \ \ \ \mbox{and}\ \ \ \ \|\Bv^*\|_{H^{\frac{19}{12}}(\Omega)} \leq C \Phi^{-\frac{1}{30}} \|\BF^r\|_{L^2(\Omega)}.
\ee
\end{pro}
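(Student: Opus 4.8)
The plan is to pass to the Fourier transform in the axial variable $z$, prove sharp one‑dimensional weighted estimates in $r$ with explicit dependence on the flux $\Phi$, and reassemble by Plancherel's theorem; the $H^{19/12}$ bound will then follow from the $L^2$ bound by interpolation against Proposition \ref{existence-stream}. Since $F^z\equiv0$ in \eqref{4-11}, the transform $\hat\psi(r,\xi)$ solves \eqref{2-0-8} with $\widehat{F^z}=0$, that is $i\xi\bar U(r)(\mathcal L-\xi^2)\hat\psi-(\mathcal L-\xi^2)^2\hat\psi=i\xi\widehat{F^r}$ with the boundary conditions \eqref{FBC}. Writing $L_\xi=\mathcal L-\xi^2$ and $\hat\omega=L_\xi\hat\psi$ (the transformed azimuthal vorticity) this is equivalent to the second order problem $i\xi\bar U\hat\omega-L_\xi\hat\omega=i\xi\widehat{F^r}$ coupled with $L_\xi\hat\psi=\hat\omega$; I write $\|g\|_{L^2_r}^2=\int_0^1|g|^2r\,dr$. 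Proposition \ref{propcase0} already supplies the $\Phi$‑independent a priori bound, so the whole task is to quantify the stabilizing role of the convection term $i\xi\bar U$ and convert it into negative powers of $\Phi$.

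\emph{Weighted energy estimates.} Pair the (transformed) vorticity equation with $r\overline{\hat\psi}$, with $r\overline{\hat\omega}$, and with weighted variants such as $r\bar U\,\overline{\hat\psi}$, and take real and imaginary parts, using that $L_\xi$ is self-adjoint in the measure $r\,dr$: all boundary terms at $r=1$ coming from $\hat\psi$ vanish because $\hat\psi(1)=\hat\psi'(1)=0$, and those at the axis vanish by the compatibility conditions in \eqref{FBC}. The multiplications that involve $\bar U(r)=\tfrac{2\Phi}{\pi}(1-r^2)$ produce, in their imaginary parts, terms of size $\sim\Phi\,\xi\int_0^1(1-r^2)|\hat\omega|^2 r\,dr$ (and similar with $\hat\psi$): this is the precise form of the good effect of convection. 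Two technical points must be dealt with. First, $\bar U$ degenerates at $r=1$, so one localizes, $(0,1)=(0,1-\delta)\cup(1-\delta,1)$, keeps the $\Phi$‑gain on $(0,1-\delta)$, and absorbs the thin wall layer into the biharmonic dissipation at the price of a power of $\delta^{-1}$. Second, pairing with $r\overline{\hat\omega}$ generates a boundary term $\partial_r(r\hat\omega)\,\overline{\hat\omega}\big|_{r=1}$; since $\bar U(1)=0$, evaluating the vorticity equation at $r=1$ gives $L_\xi\hat\omega(1)=-i\xi\widehat{F^r}(1)$, and together with trace inequalities this term is absorbed by the dissipation and $\|\widehat{F^r}\|_{L^2_r}$. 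Optimizing $\delta$ and combining the identities yields, for every $\xi$, a bound $\|\hat\psi(\cdot,\xi)\|_{L^2_r}^2+\xi^{-2}\|\hat\omega(\cdot,\xi)\|_{L^2_r}^2\le C\,M(\xi,\Phi)\,\|\widehat{F^r}(\cdot,\xi)\|_{L^2_r}^2$ in which $M(\xi,\Phi)$ is a negative power of $\Phi$ once $|\xi|\Phi$ exceeds a threshold, and is $O(1)$ below it.

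\emph{Frequency decomposition and the $L^2$ bound.} Split $\mathbb{R}=\{|\xi|\le\xi_\ast\}\cup\{|\xi|>\xi_\ast\}$ with $\xi_\ast=\xi_\ast(\Phi)$ a negative power of $\Phi$. On $\{|\xi|\le\xi_\ast\}$ the right-hand side $i\xi\widehat{F^r}$ already carries the small factor $\xi$, which—together with the bound of the previous step wherever $|\xi|\Phi$ is large—controls the low-frequency part of $\int_{\mathbb{R}}\!\int_0^1(\xi^2|\hat\psi|^2+r^{-2}|\partial_r(r\hat\psi)|^2)r\,dr\,d\xi$; on $\{|\xi|>\xi_\ast\}$ the convection gain is available throughout. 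Choosing $\xi_\ast$ so that both ranges contribute at the same order gives $\int_{\mathbb{R}}\!\int_0^1(\xi^2|\hat\psi|^2+r^{-2}|\partial_r(r\hat\psi)|^2)r\,dr\,d\xi\le C\Phi^{-4/3}\|\BF^r\|_{L^2(\Omega)}^2$. Since $\widehat{v^r}=i\xi\hat\psi$ and $\widehat{v^z}=-r^{-1}\partial_r(r\hat\psi)$, Plancherel's theorem turns this into $\|\Bv^*\|_{L^2(\Omega)}\le C\Phi^{-2/3}\|\BF^r\|_{L^2(\Omega)}$, the first assertion.

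\emph{Interpolation and the main obstacle.} With $F^z\equiv0$, Proposition \ref{existence-stream} applied to $\BF^*=F^r\Be_r$ gives the $\Phi$‑independent bound $\|\Bv^*\|_{H^{5/3}(\Omega)}\le C\|\BF^r\|_{L^2(\Omega)}$. Since $\tfrac{19}{12}=\tfrac{19}{20}\cdot\tfrac53$, interpolating it with the $L^2$ bound above gives
\begin{equation}\nonumber
\|\Bv^*\|_{H^{19/12}(\Omega)}\le\|\Bv^*\|_{L^2(\Omega)}^{1/20}\|\Bv^*\|_{H^{5/3}(\Omega)}^{19/20}\le C\big(\Phi^{-2/3}\big)^{1/20}\|\BF^r\|_{L^2(\Omega)}=C\,\Phi^{-1/30}\|\BF^r\|_{L^2(\Omega)} ,
\end{equation}
which is the second assertion. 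The hard part is the weighted energy estimates and the balance in the frequency decomposition: extracting the convection gain requires choosing the right multipliers, the wall layer where $\bar U$ degenerates (and the boundary term in the $\overline{\hat\omega}$ pairing) must be handled with care, and the exponent $-2/3$ only appears after optimizing the auxiliary parameter $\delta$ and the cutoff $\xi_\ast(\Phi)$ against each other; by contrast the Hardy--Poincar\'e inequality, Plancherel, and the Sobolev interpolation are routine.
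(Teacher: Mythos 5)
Your overall skeleton (Fourier transform in $z$, exploit the sign-definite contribution of the convection term, Plancherel, then interpolate against Proposition \ref{existence-stream}) matches the paper, and your final interpolation step is exactly the paper's: $\tfrac{19}{12}=\tfrac{19}{20}\cdot\tfrac53$ and $(\Phi^{-2/3})^{1/20}=\Phi^{-1/30}$. But the heart of the proposition --- the bound $\|\Bv^*\|_{L^2(\Omega)}\le C\Phi^{-2/3}\|\BF^r\|_{L^2(\Omega)}$ --- is not actually derived in your write-up. You assert that a combination of multiplier identities, a wall-layer localization of width $\delta$ near $r=1$, and a frequency cutoff $\xi_*(\Phi)$ can be ``optimized'' to produce ``a negative power of $\Phi$,'' but you never compute that power, never verify that the optimization lands on $\Phi^{-4/3}$ for the squared norm, and never justify the claimed intermediate bound involving $\xi^{-2}\|\hat\omega\|_{L^2_r}^2$ (a quantity for which no $\Phi$-gain is available: Proposition \ref{propcase0} controls $\|\mathcal L\hat\psi\|_{L^2_r}$ only with a $\Phi$-independent constant, and the gain lives exclusively on lower-order quantities). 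Since the entire content of the proposition is the precise exponent $-2/3$, this is a genuine gap, not a routine omission.

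The paper's mechanism is both different and concrete, and neither of your two auxiliary devices is needed. Taking the real part of the $r\overline{\hat\psi}$ pairing gives the identity \eqref{4-14}, in which the degenerate weight $\bar U(r)=\tfrac{2\Phi}{\pi}(1-r^2)$ is kept intact rather than localized away; the Hardy--Littlewood--P\'olya inequality (Lemma \ref{lemmaHLP}) bounds $\int_0^1|\hat\psi|^2 r\,dr$ by $\int_0^1|\tfrac{d}{dr}(r\hat\psi)|^2\tfrac{1-r^2}{r}\,dr$, so the right-hand side of \eqref{4-14} is absorbed and one obtains \eqref{4-16} with the factor $\Phi^{-2}$ --- note that the factor $\xi$ cancels identically from both sides, so the estimate is uniform in $\xi$ and no frequency decomposition with a threshold in $|\xi|\Phi$ is required. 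The exponent $2/3$ then enters through the weighted interpolation inequality of Lemma \ref{weightinequality} combined with Proposition \ref{propcase0}: the unweighted quantity is controlled by the $(1-r^2)$-weighted one to the power $\tfrac23$ times the top-order ($\Phi$-independent) quantity to the power $\tfrac13$, giving $(\Phi^{-2})^{2/3}=\Phi^{-4/3}$ in \eqref{4-17} and \eqref{4-20}. This lemma is the ingredient your argument is missing; without it (or an equivalent), your $\delta$- and $\xi_*$-optimizations would have to be carried out explicitly and shown to reproduce the same exponent, which you have not done.
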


\begin{proof}
Taking the Fourier transform with respect to $z$ for the system \eqref{4-11} yields that for every fixed $\xi$, $\hat{\psi}(r, \xi)$ satisfies
\be \label{4-12}
i \xi \bar{U}(r) (\mathcal{L}- \xi^2) \hat{\psi} - ( \mathcal{L} - \xi^2)^2 \hat{\psi}= i \xi \widehat{F^r}.
\ee
Multiplying \eqref{4-12} by $r\overline{\hat{\psi}}$ and integrating the resulting equation over $[0, 1]$ give
\be \nonumber
\int_0^1 i \xi \bar{U} (r) (\mathcal{L}- \xi^2) \hat{\psi } \overline{\hat{\psi}} r \, dr -
\int_0^1 (\mathcal{L} - \xi^2 )^2 \hat{\psi} \overline{\hat{\psi}} r \, dr = i \xi \int_0^1 \widehat{F^r} \overline{\hat{\psi}} r \, dr.
\ee
It follows from the direct computations and integration by parts that one has
\be \label{4-14}
\xi \int_0^1 \frac{\bar{U}(r) }{r } \left|  \frac{d}{dr} (r \hat{\psi} ) \right|^2 \, dr + \xi^3 \int_0^1 \bar{U}(r) |\hat{\psi}|^2 r \, dr = -\Re \int_0^1 \xi \widehat{F^r} \overline{\hat{\psi}} r\, dr.
\ee
This, together with  together with Lemma \ref{lemmaHLP}, gives
\be \nonumber \ba
 \Phi \int_0^1 \left| \frac{d}{dr} ( r \hat{\psi})   \right|^2 \frac{1- r^2}{r} \, dr & \leq C \int_0^1 |\what{F^r} | |\hat{\psi}| r \, dr \\
&  \leq C \left( \int_0^1 |\what{F^r} |^2 r \, dr  \right)^{\frac12} \left( \int_0^1  \left| \frac{d}{dr} ( r \hat{\psi})   \right|^2 \frac{1- r^2}{r} \, dr  \right)^{\frac12}.
 \ea
\ee
Hence, we have
\be \label{4-16}
\int_0^1  \left| \frac{d}{dr} ( r \hat{\psi})   \right|^2 \frac{1- r^2}{r} \, dr \leq C \Phi^{-2} \int_0^1 |\what{F^r}|^2 r \, dr.
\ee
It follows from  \ref{weightinequality}, Lemma \ref{lemma1}, and Proposition \ref{propcase0} that
\be \label{4-17} \ba
\int_0^1 \left| \frac{d}{dr} (r \hat{\psi}) \right|^2 \frac1r \, dr & \leq C \left(  \int_0^1  \left| \frac{d}{dr} ( r \hat{\psi})   \right|^2 \frac{1- r^2}{r} \, dr  \right)^{\frac23} \left( \int_0^1 |\mathcal{L} \hat{\psi} |^2 r \, dr  \right)^{\frac13} \\
& \leq C \Phi^{-\frac43} \int_0^1 |\what{F^r}|^2 r \, dr.
\ea
\ee
This implies
\be \label{4-18}
\|v^z \|_{L^2(\Omega)}^2 = \int_{-\infty}^{+\infty} \int_0^1 \left| \frac{d}{dr} (r \hat{\psi} )  \right|^2 \frac1r \, dr d\xi
\leq C  \Phi^{-\frac43} \|\BF^r \|_{L^2(\Omega)}^2.
\ee

Similarly, the equality \eqref{4-14}, together with Lemma \ref{lemmaHLP}, gives
\be \label{4-19} \ba
\Phi \xi^2 \int_0^1 (1 - r^2) |\hat{\psi}|^2 r \, dr
& \leq C \left( \int_0^1 |\what{F^r} |^2  r \, dr  \right)^{\frac12} \left( \int_0^1 \left| \frac{d}{dr} (r \hat{\psi} )  \right|^2 \frac{1-r^2}{r} \, dr  \right)^{\frac12} \\
& \leq C \Phi^{-2} \int_0^1 |\what{F^r} |^2 r \, dr.
\ea
\ee
It follows from Lemma \ref{weightinequality}, Lemma \ref{lemma1}, and Proposition \ref{propcase0} again that one has
\be \label{4-20} \ba
\xi^2 \int_0^1 |\hat{\psi}|^2 r \, dr &
\leq C \left( \xi^2 \int_0^1 (1  - r^2) |\hat{\psi}|^2 r \, dr   \right)^{\frac23} \left( \xi^2 \int_0^1 \left| \frac{d}{dr} (r \hat{\psi} ) \right|^2 \frac1r \, dr  \right)^{\frac13} \\
& \leq C \Phi^{-\frac43} \int_0^1 |\what{F^r}|^2 r \, dr .
\ea
\ee
This implies
\be \label{4-21}
\|v^r \|_{L^2(\Omega)}^2 = \int_{-\infty}^{+\infty} \int_0^1 \xi^2 |\hat{\psi}|^2 r \, dr d\xi \leq C \Phi^{-\frac43} \|\BF^r\|_{L^2(\Omega)}^2.
\ee

By the interpolation between $L^2(\Omega)$ and $H^{\frac53}(\Omega)$, one has
\be \label{4-22}
\|\Bv^*\|_{H^{\frac{19}{12}} (\Omega)}
\leq C \|\Bv^*\|_{L^2(\Omega)}^{\frac{1}{20}} \|\Bv^*\|_{H^{\frac53}(\Omega)}^{\frac{19}{20}} \leq C \Phi^{-\frac{1}{30}} \|\BF^r\|_{L^2(\Omega)}.
\ee
This finishes the proof of Proposition \ref{further-estimate-1}.
\end{proof}

Next, we study the case when $\BF$ has only axial component.
\begin{pro} \label{further-estimate-2}
Assume that $\BF^z = F^z \Be_z \in L^2(\Omega)$, the solution $\psi$ of the following problem
\be \label{4-31}
\left\{  \ba & \bar{U}(r) \partial_z (\mathcal{L} + \partial_z^2) \psi - (\mathcal{L} + \partial_z^2 )^2 \psi = - \partial_r  F^z , \\
& \psi(0) = \psi(1) = \frac{\partial}{\partial r } \psi(1) = \mathcal{L} \psi (0) = 0,
\ea
\right.
\ee
satisfies
\be \nonumber
\|\Bv^r \|_{L^2(\Omega)} \leq C \Phi^{-\frac12} \|\BF^z\|_{L^2(\Omega)} \ \ \ \ \mbox{and}\ \ \ \ \|\Bv^r \|_{H^{\frac{19}{12}}(\Omega)} \leq C \Phi^{-\frac{1}{40}} \|\BF^z\|_{L^2(\Omega)},
\ee
where $\Bv^r = v^r \Be_r = \partial_z \psi \Be_r$.
\end{pro}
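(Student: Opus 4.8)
The plan is to follow the scheme of the proof of Proposition \ref{further-estimate-1}, with the forcing $\partial_z F^r$ replaced by $-\partial_r F^z$. Taking the Fourier transform of \eqref{4-31} in $z$, for each fixed $\xi$ the function $\hat\psi(r,\xi)$ solves
\[
i\xi\,\bar U(r)(\mathcal{L}-\xi^2)\hat\psi-(\mathcal{L}-\xi^2)^2\hat\psi=-\frac{d}{dr}\widehat{F^z}
\]
with the boundary conditions \eqref{FBC}. First I would multiply this equation by $r\overline{\hat\psi}$, integrate over $[0,1]$, integrate by parts in the fourth-order term (which produces the real quantity $\int_0^1|(\mathcal{L}-\xi^2)\hat\psi|^2r\,dr$ and so disappears after taking imaginary parts) and once on the right-hand side (legitimate because $\hat\psi(1)=0$). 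Using $\bar U(r)=\frac{2\Phi}{\pi}(1-r^2)$ and the same cancellation of lower-order terms as in the derivation of \eqref{4-14}, this should yield the energy identity
\[
|\xi|\,\Phi\big(X'+\xi^2Y\big)\le C\Big|\int_0^1\widehat{F^z}\,\tfrac{d}{dr}(r\overline{\hat\psi})\,dr\Big|\le C\,g^{\frac12}Z^{\frac12},
\]
where $g:=\int_0^1|\widehat{F^z}|^2r\,dr$, $Z:=\int_0^1\frac1r|\frac{d}{dr}(r\hat\psi)|^2\,dr$, $X':=\int_0^1\frac{1-r^2}{r}|\frac{d}{dr}(r\hat\psi)|^2\,dr$, $Y:=\int_0^1(1-r^2)|\hat\psi|^2r\,dr$, and the last step is Cauchy--Schwarz with weight $r$.

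The key point — and the main difficulty — is to use this identity without dividing by $|\xi|$: the coercive term $|\xi|\Phi X'$ produced by the convection term degenerates as $\xi\to0$, so dividing would introduce a factor $|\xi|^{-1}$ that is not integrable near $\xi=0$. Instead I would couple the identity with the interpolation \eqref{4-17}, $Z\le C(X')^{2/3}\big(\int_0^1|\mathcal{L}\hat\psi|^2r\,dr\big)^{1/3}$, and Proposition \ref{propcase0}, which gives $\int_0^1|\mathcal{L}\hat\psi|^2r\,dr\le Cg$. From the identity $X'\le Cg^{1/2}Z^{1/2}/(|\xi|\Phi)$; substituting into \eqref{4-17} gives $Z\le Cg^{2/3}Z^{1/3}/(|\xi|\Phi)^{2/3}$, which closes to
\[
Z\le\frac{Cg}{|\xi|\Phi},\qquad X'\le\frac{Cg}{(|\xi|\Phi)^{3/2}},\qquad Y\le\frac{Cg}{|\xi|^{7/2}\Phi^{3/2}}.
\]
Combining these with the bounds $Z\le Cg$, $\xi^2Z\le Cg$, $Y\le Cg$ from Proposition \ref{propcase0} and the interpolation \eqref{4-20}, $\xi^2\int_0^1|\hat\psi|^2r\,dr\le C(\xi^2Y)^{2/3}(\xi^2Z)^{1/3}$, I would obtain
\[
\xi^2\int_0^1|\hat\psi|^2r\,dr\le
\begin{cases}
C\,\xi^2g & \text{always},\\
C\,g\,|\xi|^{-2/3}\Phi^{-4/3} & \text{if }|\xi|\le\Phi,\\
C\,g\,|\xi|^{-1}\Phi^{-1} & \text{if }|\xi|>\Phi,
\end{cases}
\]
the middle case using $\xi^2Z\le Cg|\xi|/\Phi$ for $|\xi|\le\Phi$ and the last using $\xi^2Z\le Cg$.

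Since $\|\Bv^r\|_{L^2(\Omega)}^2=\int_{\mathbb{R}}\xi^2\int_0^1|\hat\psi|^2r\,dr\,d\xi$, I would split the $\xi$-integral at $|\xi|=\Phi^{-1/2}$ and $|\xi|=\Phi$: on $\{|\xi|\le\Phi^{-1/2}\}$ the bound $C\xi^2g$ contributes $\le C\Phi^{-1}\|\BF^z\|_{L^2}^2$; on $\{\Phi^{-1/2}<|\xi|\le\Phi\}$ the bound $Cg|\xi|^{-2/3}\Phi^{-4/3}$ integrates to $\le C\Phi^{1/3}\cdot\Phi^{-4/3}\|\BF^z\|_{L^2}^2=C\Phi^{-1}\|\BF^z\|_{L^2}^2$; on $\{|\xi|>\Phi\}$ the bound $Cg|\xi|^{-1}\Phi^{-1}$ contributes $\le C\Phi^{-2}\|\BF^z\|_{L^2}^2$. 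This gives $\|\Bv^r\|_{L^2(\Omega)}\le C\Phi^{-1/2}\|\BF^z\|_{L^2(\Omega)}$. Finally, Proposition \ref{existence-stream} with $\BF^*=F^z\Be_z$ gives $\|\Bv^r\|_{H^{5/3}(\Omega)}\le C\|\BF^z\|_{L^2(\Omega)}$, so interpolation yields
\[
\|\Bv^r\|_{H^{19/12}(\Omega)}\le C\|\Bv^r\|_{L^2(\Omega)}^{1/20}\|\Bv^r\|_{H^{5/3}(\Omega)}^{19/20}\le C\,\Phi^{-1/40}\|\BF^z\|_{L^2(\Omega)},
\]
as claimed. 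The delicate part throughout is arranging the split points and the chain of interpolation exponents so that every region contributes at most $\Phi^{-1}\|\BF^z\|_{L^2}^2$.
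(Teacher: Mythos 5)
Your argument is correct, but it takes a genuinely longer route than the paper's. The paper does not divide by $|\xi|$ either: it multiplies the energy identity \eqref{4-33} by $|\xi|$, keeps the coercive term $\Phi\,\xi^2\int_0^1 \bigl|\tfrac{d}{dr}(r\hat{\psi})\bigr|^2\tfrac{1-r^2}{r}\,dr$ on the left, and bounds the right-hand side by $|\xi|\,g^{1/2}Z^{1/2}\le Cg$ using the uniform estimate $\xi^2 Z\le Cg$ from Proposition \ref{propcase0} --- exactly your first inequality, read the other way. The decisive difference is the next step: instead of interpolating down to $Z$ and $Y$ and splitting the frequency axis into three regions, the paper invokes the Hardy--Littlewood--P\'olya inequality \eqref{HLP-2} of Lemma \ref{lemmaHLP}, which gives $\int_0^1|\hat{\psi}|^2 r\,dr\le C\int_0^1 \bigl|\tfrac{d}{dr}(r\hat{\psi})\bigr|^2\tfrac{1-r^2}{r}\,dr$ directly, so that $\xi^2\int_0^1|\hat{\psi}|^2 r\,dr\le C\Phi^{-1}g$ holds for every single $\xi$ and the $L^2$ bound follows by integrating in $\xi$ with no case analysis. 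Your decomposition does reach the same $\Phi^{-1/2}$ rate, but at the cost of tracking the additive remainders in Lemma \ref{weightinequality}: your closed bound $Z\le Cg/(|\xi|\Phi)$ actually acquires an extra $Cg/(|\xi|\Phi)^2$ from the term $C\int_0^1\tfrac{1-r^2}{r}\bigl|\tfrac{d}{dr}(r\hat{\psi})\bigr|^2dr$ in \eqref{weight2}, which is harmless only because your middle region starts at $|\xi|=\Phi^{-1/2}$, where $|\xi|\Phi\ge 1$; this should be said explicitly. The final interpolation from $L^2$ and $H^{5/3}$ to $H^{19/12}$ is identical in both arguments.
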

\begin{proof}
Taking the Fourier transform with respect to $z$ for the system \eqref{4-31} yields that for fixed $\xi $, $\hat{\psi}(r, \xi)$ satisfies
\be \label{4-32}
i \xi \bar{U}(r) (\mathcal{L}- \xi^2) \hat{\psi} - ( \mathcal{L} - \xi^2) \hat{\psi}= -\frac{d}{dr}  \widehat{F^z}.
\ee
Multiplying \eqref{4-32} by $r\overline{\hat{\psi}}$ and integrating the resulting equation over $[0, 1]$ give
\be \label{4-33}
\xi \int_0^1 \frac{\bar{U}(r) }{r } \left|  \frac{d}{dr} (r \hat{\psi} ) \right|^2 \, dr + \xi^3 \int_0^1 \bar{U}(r) |\hat{\psi}|^2 r \, dr = \Im \int_0^1  \widehat{F^z} \frac{d}{dr} ( r \overline{\hat{\psi}} ) \, dr.
\ee
This, together with  Proposition \ref{propcase0}, implies
\be \label{4-34}
\Phi \xi^2 \int_0^1 \left| \frac{d}{dr}(r \hat{\psi} ) \right|^2 \frac{1- r^2}{r} \, dr
\leq C |\xi| \int_0^1 |\widehat{F^z}| \left| \frac{d}{dr}(r \hat{\psi}) \right| \, dr \leq C \int_0^1 |\what{F^z}|^2 r \, dr.
\ee
Applying Lemma \ref{lemmaHLP} yields
\be \label{4-35}
\xi^2 \int_0^1 |\hat{\psi}|^2 r \, dr \leq C \xi^2 \int_0^1 \left| \frac{d}{dr} (r \hat{\psi} )  \right|^2 \frac{1-r^2}{r} \, dr
\leq C \Phi^{-1} \int_0^1 |\widehat{F^z}|^2 r \, dr .
\ee
Therefore, one has
\be \label{4-36}
\| \Bv^r\|_{L^2(\Omega)} \leq C \Phi^{-\frac12} \|\BF^z\|_{L^2(\Omega)}.
\ee

It follows from the interpolation and Proposition \ref{existence-stream} that one has
\be \label{4-37}
\|\Bv^r\|_{H^{\frac{19}{12}}(\Omega)} \leq C \Phi^{-\frac{1}{40}} \|\BF^z \|_{L^2(\Omega)}.
\ee
This finishes the proof of Proposition \ref{further-estimate-2}.
\end{proof}

Now we are in position to analyze $v^\theta$. Taking the Fourier transform with respect to $z$ for the problem \eqref{swirlsystem} gives
\be \label{swirl-Fourier}
\left\{
\ba & i \xi \bar{U}(r) \what{v^\theta} - ( \mathcal{L} - \xi^2) \what{v^\theta} = \what{F^\theta}, \\
& \what{v^\theta}(1 ) = \what{v^\theta}(0) = 0 .
\ea
\right.
\ee
Let us first recall the uniform estimate for $v^\theta$ obtained in \cite{WXX1}.
\begin{pro}\label{uniform-swirl}\cite[Proposition 3.1]{WXX1}
Assume that  $\what{v^\theta}$ is a smooth solution to the problem \eqref{swirl-Fourier}. For every fixed $\xi$, it holds that \be \label{swirl-uniform}
\int_0^1 |\mathcal{L} \what{v^\theta}|^2 r \, dr + \xi^2 \int_0^1 \left| \frac{d}{dr} (r  \what{v^\theta})  \right|^2 \frac1r \, dr + \xi^4 \int_0^1 |\what{v^\theta}|^2 r \, dr
\leq C \int_0^1 |\what{F^\theta}|^2 r \, dr .
\ee
\end{pro}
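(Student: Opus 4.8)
The estimate \eqref{swirl-uniform} is a statement about the fixed-$\xi$ problem \eqref{swirl-Fourier}; throughout write $g:=\what{v^\theta}(\cdot,\xi)$ and $\|f\|^2:=\int_0^1|f|^2\,r\,dr$. The first step is an algebraic reduction. Integrating by parts gives $\int_0^1\mathcal{L}g\,\bar g\,r\,dr=-\int_0^1\big|\tfrac{d}{dr}(rg)\big|^2\tfrac1r\,dr$, the boundary terms vanishing by \eqref{swirl-Fourier} and the regularity of $g$ at the axis, so that
\[
\int_0^1|\mathcal{L}g|^2\,r\,dr+\xi^2\int_0^1\Big|\tfrac{d}{dr}(rg)\Big|^2\tfrac1r\,dr+\xi^4\int_0^1|g|^2\,r\,dr\ \le\ \int_0^1\big|(\mathcal{L}-\xi^2)g\big|^2\,r\,dr .
\]
Since $(\mathcal{L}-\xi^2)g=i\xi\bar U g-\what{F^\theta}$, the right-hand side is $\le 2\xi^2\int_0^1\bar U^2|g|^2\,r\,dr+2\|\what{F^\theta}\|^2$, so the whole of \eqref{swirl-uniform} follows once one shows the single estimate $\xi^2\int_0^1\bar U(r)^2|g(r)|^2\,r\,dr\le C\|\what{F^\theta}\|^2$ with $C$ independent of $\xi$ and $\Phi$. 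Note in particular that the $\xi^2$- and $\xi^4$-weighted terms of \eqref{swirl-uniform} are then automatically uniform in $\Phi$.

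To prepare for this I would collect the elementary energy identities. Testing \eqref{swirl-Fourier} against $r\bar g$, the term $i\xi\int_0^1\bar U|g|^2\,r\,dr$ is purely imaginary — the skew-symmetry of the linearized convection — so the real part is $\int_0^1\big|\tfrac{d}{dr}(rg)\big|^2\tfrac1r\,dr+\xi^2\|g\|^2=\Re\int_0^1\what{F^\theta}\,\bar g\,r\,dr$ and the imaginary part is $\xi\int_0^1\bar U|g|^2\,r\,dr=\Im\int_0^1\what{F^\theta}\,\bar g\,r\,dr$. Testing \eqref{swirl-Fourier} against $r\bar U\bar g$ and integrating by parts, the commutator term is $\Re\int_0^1\bar U'\,\bar g\,g'\,r\,dr=-\tfrac12\int_0^1(\bar U''r+\bar U')|g|^2\,dr=\tfrac{4\Phi}{\pi}\int_0^1|g|^2\,r\,dr\ge 0$, a favorable sign dictated by the explicit profile of $\bar U$, so the real part reads
\[
\tfrac{4\Phi}{\pi}\int_0^1|g|^2\,r\,dr+\int_0^1\bar U|g'|^2\,r\,dr+\int_0^1\tfrac{\bar U}{r}|g|^2\,dr+\xi^2\int_0^1\bar U|g|^2\,r\,dr=\Re\int_0^1\what{F^\theta}\,\bar U\,\bar g\,r\,dr .
\]
All left-hand terms are nonnegative; bounding the right side by $\tfrac{2\Phi}{\pi}\|\what{F^\theta}\|\,\|g\|$ and absorbing yields $\|g\|\lesssim\|\what{F^\theta}\|$, then $\int_0^1 r|g'|^2\,dr\lesssim\|\what{F^\theta}\|^2$, and — recalling $\bar U=\tfrac{2\Phi}{\pi}(1-r^2)$ — the $\Phi$-weighted interior bounds $\xi^2\int_0^1(1-r^2)|g|^2\,r\,dr\lesssim\|\what{F^\theta}\|^2$ and $\Phi|\xi|\int_0^1(1-r^2)|g|^2\,r\,dr\lesssim\|\what{F^\theta}\|^2$.

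The remaining, and hardest, step is to promote these to the reduced estimate, i.e.\ to gain the missing factor $\bar U\sim\Phi(1-r^2)$ that upgrades $\int(1-r^2)|g|^2 r$ to $\int\bar U^2|g|^2 r$ without losing a power of $\Phi$: the crude bound $\int_0^1\bar U^2|g|^2\,r\,dr\le\|\bar U\|_{L^\infty}^2\int_0^1(1-r^2)|g|^2\,r\,dr$ is off by exactly one power of $\Phi$, and one checks that all the soft quadratic identities above, though mutually consistent, are circular on this point. The discrepancy is a boundary-layer effect: $g$ is of size $\sim|\what{F^\theta}|/(\Phi|\xi|)$ in the bulk, where $\bar U\sim\Phi$, and relaxes to $0$ at the wall $r=1$ through a layer of width $\sim(\Phi|\xi|)^{-1/3}$, on which, after rescaling, $-\partial_s^2+\xi^2+i\xi\bar U$ reduces to leading order to a dilation of the complex Airy operator $-\partial_s^2+is$ on a half-line, whose inverse is bounded on $L^2$ uniformly. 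I would close the estimate either by splitting the radial integral at $r=1-\delta$ with $\delta\sim(\Phi|\xi|)^{-1/3}$ — estimating the interior part $\{r\le 1-\delta\}$ by the weighted bounds above (where $1-r^2\gtrsim\delta$) and the wall layer $\{r\ge 1-\delta\}$ using $g(1)=0$ together with the Airy-type resolvent estimate, then absorbing — or, more cleanly, by a contradiction/compactness argument: if the reduced estimate failed there would be sequences $\Phi_n,\xi_n,g_n$ with $\|\what{F^\theta_n}\|=1$ but $\xi_n^2\int_0^1\bar U_n^2|g_n|^2\,r\,dr\to\infty$, the energy identities forcing $g_n\to 0$ locally away from $r=1$, while a suitable normalization and rescaling near $r=1$ produces in the limit a solution of a constant-coefficient half-line model problem for which the asserted blow-up is impossible. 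This boundary-layer step is the main obstacle; with the reduced estimate in hand, the first paragraph finishes the proof of \eqref{swirl-uniform}.
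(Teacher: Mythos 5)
This proposition is not proved in the present paper at all: it is quoted from \cite[Proposition 3.1]{WXX1}, where its proof occupies a substantial portion of that paper's Section 3 and, in the regime where $\Phi|\xi|$ is large, goes well beyond quadratic energy identities. Measured against that, your proposal is an honest and partially correct reduction, but it stops exactly at the hard step. What you do prove is fine: the identity $\int_0^1\mathcal{L}g\,\bar g\,r\,dr=-\int_0^1\bigl|\tfrac{d}{dr}(rg)\bigr|^2\tfrac1r\,dr$ does show that the left-hand side of \eqref{swirl-uniform} is dominated by $\int_0^1|(\mathcal{L}-\xi^2)g|^2r\,dr$, so the whole proposition reduces to $\xi^2\int_0^1\bar U^2|g|^2r\,dr\le C\int_0^1|\what{F^\theta}|^2r\,dr$ with $C$ independent of $\xi$ and $\Phi$; your energy identities with the multipliers $r\bar g$ and $r\bar U\bar g$ are correct (including the favorable sign $-\tfrac12(r\bar U')'=\tfrac{4\Phi}{\pi}r$), and you correctly diagnose that they are mutually consistent but do not close: they only yield $\xi^2\int_0^1\bar U^2|g|^2r\,dr\lesssim\min(\Phi,\,\Phi|\xi|)\int_0^1|\what{F^\theta}|^2r\,dr$, which loses a factor precisely when $\Phi^{-1}\lesssim|\xi|\lesssim\Phi$ and $\Phi$ is large.

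The gap is that the one estimate everything reduces to is the entire content of the proposition, and you do not prove it; you only name two strategies. Worse, the first strategy is misdescribed at the interior step: on $\{r\le 1-\delta\}$ the weighted bounds $\xi^2\int_0^1(1-r^2)|g|^2r\,dr\lesssim\|\what{F^\theta}\|^2$ and $\Phi|\xi|\int_0^1(1-r^2)|g|^2r\,dr\lesssim\|\what{F^\theta}\|^2$ do \emph{not} control $\xi^2\int\bar U^2|g|^2r\,dr$ uniformly in $\Phi$, since $\bar U^2\sim\Phi^2(1-r^2)^2$ and using $(1-r^2)^2\le(1-r^2)$ still leaves a factor $\min(\Phi^2,\Phi|\xi|)$; these global quadratic bounds cannot distinguish the bulk from the layer. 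The bulk contribution really needs a pointwise-type estimate $|g|\lesssim|\what{F^\theta}|/(|\xi|\bar U)$ away from $r=1$, which is itself part of the resolvent construction you are trying to avoid, and the wall contribution needs a quantitative Airy-type (complex, non-self-adjoint) resolvent bound with the variable-coefficient and $1/r$ errors controlled. The compactness alternative is too vague to assess (which rescaling, what limit problem, why the limit is trivial). So: correct reduction, correct identification of the obstruction, but the decisive non-self-adjoint/boundary-layer estimate — the actual theorem — is left unproved.
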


The next two propositions give some further estimates for $v^\theta$.
\begin{pro}\label{further-estimate-3}
Assume that $\BF^\theta = F^\theta \Be_\theta \in L^2(\Omega)$. The solution $v^\theta$ to the linear problem \eqref{swirlsystem} satisfies that
\be \nonumber
\|\partial_z \Bv^\theta \|_{L^2(\Omega)} \leq C \Phi^{-\frac13} \|\BF^\theta\|_{L^2(\Omega)}.
\ee
\end{pro}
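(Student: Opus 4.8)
The plan is to mimic the argument used for Propositions \ref{further-estimate-1} and \ref{further-estimate-2}: work on the Fourier side, extract from the equation the good contribution of the convection term $i\xi\bar U(r)$, and combine it with the uniform estimate of Proposition \ref{uniform-swirl} through the Hardy–Littlewood–Pólya type inequality (Lemma \ref{lemmaHLP}) and the interpolation inequality (Lemma \ref{weightinequality}). Concretely, I would start from \eqref{swirl-Fourier}, multiply by $r\overline{\what{v^\theta}}$ and integrate over $[0,1]$. The term $-(\mathcal{L}-\xi^2)\what{v^\theta}$ contributes, after integration by parts (using the boundary conditions $\what{v^\theta}(0)=\what{v^\theta}(1)=0$), a real, nonnegative quantity, while the convection term contributes $i\xi\int_0^1 \bar U(r)|\what{v^\theta}|^2 r\,dr$, which is purely imaginary. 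Taking the imaginary part of the identity therefore isolates
\be \nonumber
|\xi|\int_0^1 \bar U(r) |\what{v^\theta}|^2 r\, dr = \left| \Im \int_0^1 \what{F^\theta}\, r\, \overline{\what{v^\theta}}\, dr \right| \leq \int_0^1 |\what{F^\theta}|\, |\what{v^\theta}|\, r\, dr.
\ee
Since $\bar U(r) = \frac{2\Phi}{\pi}(1-r^2)$, this yields, after Cauchy–Schwarz on the right-hand side,
\be \nonumber
\Phi\, |\xi| \int_0^1 (1-r^2) |\what{v^\theta}|^2 r\, dr \leq C \left( \int_0^1 |\what{F^\theta}|^2 r\, dr \right)^{\frac12} \left( \int_0^1 |\what{v^\theta}|^2 r\, dr \right)^{\frac12}.
\ee

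Next I would remove the degenerate weight $(1-r^2)$. Using Lemma \ref{weightinequality} to interpolate between the weighted norm $\int_0^1 (1-r^2)|\what{v^\theta}|^2 r\,dr$ and the higher-order norm $\int_0^1 |\mathcal{L}\what{v^\theta}|^2 r\,dr$ (which is controlled by $\|\what{F^\theta}\|_{L_r^2}^2$ via Proposition \ref{uniform-swirl}), I can upgrade to a bound on $\int_0^1 |\what{v^\theta}|^2 r\,dr$ itself. Feeding that back to absorb the last factor above, one arrives at an estimate of the form
\be \nonumber
\xi^2 \int_0^1 |\what{v^\theta}|^2 r\, dr \leq C \Phi^{-\frac23} \int_0^1 |\what{F^\theta}|^2 r\, dr,
\ee
where the exponent $-\tfrac23$ is dictated by the interpolation weights (one factor of $\Phi^{-1}$ from the degenerate identity, redistributed by the $\tfrac23$–$\tfrac13$ split, exactly as in \eqref{4-17} and \eqref{4-20}). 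Integrating in $\xi$ over $\mathbb{R}$ and applying Plancherel then gives $\|\partial_z \Bv^\theta\|_{L^2(\Omega)}^2 \leq C\Phi^{-\frac23}\|\BF^\theta\|_{L^2(\Omega)}^2$, which is the claimed bound.

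The step I expect to require the most care is the bookkeeping of the weights in the interpolation: one must check that $(1-r^2)$ behaves like $(1-r)$ near $r=1$ and that the relevant Lemma in the Appendix applies to $\what{v^\theta}$ with the boundary condition $\what{v^\theta}(1)=0$ (rather than the stream-function conditions $\hat\psi(1)=\hat\psi'(1)=0$ used earlier), so that no loss occurs and the power of $\Phi$ comes out to be exactly $-\tfrac13$ after taking square roots. A secondary point is to justify passing from smooth solutions to the general $\BF^\theta\in L^2$ case by density, which is routine given Proposition \ref{existence-swirl}. Everything else is a direct repetition of the computations already carried out for $\psi$.
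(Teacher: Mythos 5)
Your architecture is the same as the paper's: take the imaginary part of the Fourier-side energy identity to isolate $|\xi|\int_0^1\bar U(r)|\what{v^\theta}|^2 r\,dr$ (this is exactly \eqref{4-54}), then remove the degenerate weight $(1-r^2)$ via Lemma \ref{weightinequality} combined with the uniform estimate \eqref{swirl-uniform}. However, the concrete chain of inequalities you display does not close, and the danger is not where you think it is (the weight $(1-r^2)$ and the boundary conditions are harmless: Lemma \ref{weightinequality} holds for any $g\in C^2[0,1]$). The problem is the powers of $\xi$. After your Cauchy--Schwarz step you have
\[
\Phi|\xi|\int_0^1(1-r^2)|\what{v^\theta}|^2 r\,dr\leq C\Bigl(\int_0^1|\what{F^\theta}|^2 r\,dr\Bigr)^{\frac12}\Bigl(\int_0^1|\what{v^\theta}|^2 r\,dr\Bigr)^{\frac12},
\]
and whichever way you feed the interpolated bound for $\int_0^1|\what{v^\theta}|^2 r\,dr$ back into this, the weighted quantity is only controlled with a single factor of $|\xi|^{-1}$; after interpolating and multiplying by $\xi^2$ you end up with something like $\xi^2\int_0^1|\what{v^\theta}|^2 r\,dr\leq C\Phi^{-2/3}|\xi|^{4/3}\int_0^1|\what{F^\theta}|^2 r\,dr$, which grows in $|\xi|$ and cannot be integrated over $\xi\in\mathbb{R}$.

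The missing step is to insert the powers of $\xi$ \emph{before} Cauchy--Schwarz: multiply \eqref{4-54} by $\xi^2$ and pair $\what{F^\theta}$ with $\xi^2\what{v^\theta}$, so that \eqref{swirl-uniform} (namely $\xi^4\int_0^1|\what{v^\theta}|^2 r\,dr\leq C\int_0^1|\what{F^\theta}|^2 r\,dr$) absorbs the extra $\xi$'s and yields the $\xi$-uniform bound $|\xi|^3\int_0^1(1-r^2)|\what{v^\theta}|^2 r\,dr\leq C\Phi^{-1}\int_0^1|\what{F^\theta}|^2 r\,dr$. Then \eqref{weight1} with the split $\xi^2=(|\xi|^3)^{2/3}\cdot 1^{1/3}$, together with $\int_0^1\frac1r\bigl|\frac{d}{dr}(r\what{v^\theta})\bigr|^2\,dr\leq\int_0^1|\what{F^\theta}|^2 r\,dr$ from \eqref{4-53} (note it is this quantity, not $\int_0^1|\mathcal{L}\what{v^\theta}|^2 r\,dr$, that enters \eqref{weight1}), gives $\xi^2\int_0^1|\what{v^\theta}|^2 r\,dr\leq C\Phi^{-2/3}\int_0^1|\what{F^\theta}|^2 r\,dr$ uniformly in $\xi$; integrating in $\xi$ finishes the proof. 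Alternatively, your version can be rescued by splitting into $|\xi|\leq\Phi^{1/3}$ and $|\xi|>\Phi^{1/3}$ and invoking \eqref{swirl-uniform} in the second regime, but as written the claimed bound does not follow.
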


\begin{proof}
 Multiplying the equation in \eqref{swirl-Fourier} by $r \overline{\what{v^\theta}}$ and integrating the resulting equation over  $[0, 1]$ yield
\be \label{4-52}
\int_0^1 i \xi \bar{U}(r) | \what{v^\theta} |^2  r \, dr + \int_0^1 \left| \frac{d}{dr} ( r \what{v^\theta})  \right|^2 \frac1r \, dr + \xi^2 \int_0^1 | \what{v^\theta} |^2 r \, dr
= \int_0^1 \what{F^\theta} \overline{\what{v^\theta}} r \, dr.
\ee
It follows from H\"older inequality, Lemma \ref{lemma1}, and \eqref{4-52}  that one has
\be \label{4-53}
\int_0^1 \left| \frac{d}{dr} ( r \what{v^\theta})  \right|^2 \frac1r \, dr + \xi^2 \int_0^1 | \what{v^\theta} |^2 r \, dr
\leq  \int_0^1 |\what{F^\theta}|^2 r \, dr
\ee
and
\be \label{4-54}
\Phi |\xi| \int_0^1 (1 - r^2)  | \what{v^\theta} |^2  r \, dr \leq C \int_0^1 |\what{F^\theta}| |\what{v^\theta}| r \, dr .
\ee

The equality \eqref{4-54}, together with \eqref{swirl-uniform}, gives
\be \label{4-56} \ba
|\xi|^3 \int_0^1 (1 - r^2) |\widehat{v^\theta}|^2 r \, dr &
\leq C \Phi^{-1} \left( \int_0^1 |\widehat{F^\theta}|^2 r \, dr \right)^{\frac12} \left( \xi^4 \int_0^1 |\widehat{v^\theta}|^2 r \, dr \right)^{\frac12} \\
& \leq C \Phi^{-1} \int_0^1 |\widehat{F^\theta}|^2 r \, dr.
\ea
\ee
By Lemma \ref{weightinequality} and \eqref{4-53}, one has
\be \label{4-56} \ba
|\xi|^2 \int_0^1 (1 - r^2) |\widehat{v^\theta}|^2 r \, dr
& \leq C  \left( |\xi|^3  \int_0^1 (1 - r^2) |\widehat{v^\theta}|^2 r \, dr  \right)^{\frac23} \left( \int_0^1 \left| \frac{d}{dr} ( r \widehat{v^\theta})  \right|^2 \frac1r \, dr \right)^{\frac13} \\
& \leq C \Phi^{-\frac23} \int_0^1 |\widehat{F^\theta}|^2 r \, dr .
\ea
\ee
This implies
\be \label{4-57}
\|\partial_z \Bv^\theta\|_{L^2(\Omega)} \leq C \Phi^{-\frac13} \| \BF^\theta\|_{L^2(\Omega)}.
\ee
Hence the proof of Proposition \ref{further-estimate-3} is completed.
\end{proof}

\begin{pro}\label{further-estimate-4}
Assume that $\BG^\theta = G^\theta \Be_{\theta} \in H^1(\Omega)$. The solution $v^\theta$ to the following linear problem
\be \label{4-61}
\left\{
\ba
& \bar U(r)  \partial_z  v^\theta - \left[ \frac{1}{r} \frac{\partial }{\partial r} \left( r \frac{\partial v^\theta}{\partial r}\right) + \frac{\partial^2 v^\theta}{\partial z^2} - \frac{v^\theta}{r^2} \right] =  \partial_z G^\theta  \ \ \ \mbox{in}\ \ D, \\
& v^\theta(1, z)= v^\theta(0, z) = 0,
\ea
\right.
\ee
satisfies
\be \nonumber
\|\Bv^\theta\|_{L^2(\Omega)} \leq C \Phi^{-\frac13} \|\BG^\theta\|_{L^2(\Omega)}
\ee
and
\be \nonumber
\|\Bv^\theta \|_{H^{\frac{19}{12}} (\Omega)} \leq C \Phi^{-\frac{5}{72}} \|\partial_z \BG^\theta \|_{L^2(\Omega)}^{\frac{19}{24}} \|\BG^\theta \|_{L^2(\Omega)}^{\frac{5}{24}}.
\ee
\end{pro}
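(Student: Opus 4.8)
The plan is to follow the same route as in Propositions \ref{further-estimate-1}--\ref{further-estimate-3}: take the Fourier transform of \eqref{4-61} in $z$, run the energy estimate for the resulting ODE, and then combine the dissipation bound with the convection bound --- the latter carrying the beneficial factor $\Phi$ --- through the weighted interpolation inequalities in the Appendix. Taking the Fourier transform in $z$, for each fixed $\xi$ the function $\widehat{v^\theta}(r,\xi)$ satisfies
\be \nonumber
i\xi \bar{U}(r)\widehat{v^\theta} - (\mathcal{L}-\xi^2)\widehat{v^\theta} = i\xi\widehat{G^\theta},\qquad \widehat{v^\theta}(0) = \widehat{v^\theta}(1) = 0 .
\ee
Multiplying by $r\overline{\widehat{v^\theta}}$, integrating over $[0,1]$, and integrating by parts (so that $-\int_0^1 (\mathcal{L}\widehat{v^\theta})\overline{\widehat{v^\theta}}\,r\,dr = \int_0^1 |\frac{d}{dr}(r\widehat{v^\theta})|^2 \frac1r\,dr$), the real part gives, after Young's inequality and absorbing the resulting $\tfrac12\xi^2\int_0^1|\widehat{v^\theta}|^2 r\,dr$ term,
\be \nonumber
\int_0^1 \left|\frac{d}{dr}(r\widehat{v^\theta})\right|^2\frac1r\,dr + \xi^2\int_0^1|\widehat{v^\theta}|^2 r\,dr \le C\int_0^1|\widehat{G^\theta}|^2 r\,dr ,
\ee
while the imaginary part, together with $\bar{U}(r) = \tfrac{2\Phi}{\pi}(1-r^2)$ and the Cauchy--Schwarz inequality, gives
\be \nonumber
\Phi\int_0^1 (1-r^2)|\widehat{v^\theta}|^2 r\,dr \le C\left(\int_0^1|\widehat{G^\theta}|^2 r\,dr\right)^{\frac12}\left(\int_0^1 |\widehat{v^\theta}|^2 r\,dr\right)^{\frac12} .
\ee

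Next, as in the passage \eqref{4-54}--\eqref{4-56}, I would insert the uniform estimate of Proposition \ref{uniform-swirl} (applied with $\widehat{F^\theta} = i\xi\widehat{G^\theta}$) together with the weighted inequalities of Lemma \ref{lemmaHLP}, Lemma \ref{weightinequality}, and Lemma \ref{lemma1} into the two bounds above, which lets one trade the degenerate weight $1-r^2$ for the dissipation term; tracking the exponents one arrives at
\be \nonumber
\int_0^1 |\widehat{v^\theta}|^2 r\,dr \le C\Phi^{-\frac23}\int_0^1 |\widehat{G^\theta}|^2 r\,dr .
\ee
Integrating in $\xi$ and using Plancherel's theorem yields the first bound, $\|\Bv^\theta\|_{L^2(\Omega)} \le C\Phi^{-\frac13}\|\BG^\theta\|_{L^2(\Omega)}$.

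For the second bound, observe that \eqref{4-61} is the problem \eqref{swirlsystem} with external force $F^\theta = \partial_z G^\theta$, which belongs to $L^2(\Omega)$ since $\BG^\theta\in H^1(\Omega)$; hence Proposition \ref{existence-swirl} gives $\|\Bv^\theta\|_{H^2(\Omega)} \le C\|\partial_z\BG^\theta\|_{L^2(\Omega)}$ with $C$ independent of $\Phi$. Interpolating $H^{\frac{19}{12}}$ between $L^2$ and $H^2$ (with weight $\tfrac{19}{24}$ on the $H^2$ factor) and using the $L^2$ bound just proved gives
\be \nonumber
\|\Bv^\theta\|_{H^{\frac{19}{12}}(\Omega)} \le C\|\Bv^\theta\|_{L^2(\Omega)}^{\frac{5}{24}}\|\Bv^\theta\|_{H^2(\Omega)}^{\frac{19}{24}} \le C\Phi^{-\frac{5}{72}}\|\partial_z\BG^\theta\|_{L^2(\Omega)}^{\frac{19}{24}}\|\BG^\theta\|_{L^2(\Omega)}^{\frac{5}{24}} ,
\ee
which is the claim.

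The hard part is the middle step. The convection identity only controls $\widehat{v^\theta}$ in the interior, since the weight $1-r^2$ degenerates at $r=1$, so one must recover the full $L^2_r$-norm of $\widehat{v^\theta}$ by interpolating against the dissipation term $\int_0^1|\frac{d}{dr}(r\widehat{v^\theta})|^2\frac1r\,dr$ and the uniform second-order bound, and the $\xi$- and $\Phi$-bookkeeping has to be arranged so that the power $\Phi^{-2/3}$ survives. (In fact the same scheme yields a slightly better power of $\Phi$, but only $\Phi^{-1/3}$ is needed in Section \ref{secnonlinear}.) As in the preceding propositions, all the above manipulations are first performed for the smooth solutions provided by Proposition \ref{existence-swirl} and Proposition \ref{uniform-swirl}, and the estimates pass to the limit by density, so no further regularity issue arises.
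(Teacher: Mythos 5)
Your proposal is correct and follows essentially the same route as the paper: the energy identity from multiplying the Fourier-transformed equation by $r\overline{\widehat{v^\theta}}$, whose real part gives the dissipation bound and whose imaginary part gives the $\Phi$-weighted bound $\Phi\int_0^1(1-r^2)|\widehat{v^\theta}|^2 r\,dr \leq C\int_0^1|\widehat{G^\theta}|^2 r\,dr$, followed by Lemma \ref{weightinequality} to obtain $\int_0^1|\widehat{v^\theta}|^2 r\,dr \leq C\Phi^{-\frac23}\int_0^1|\widehat{G^\theta}|^2 r\,dr$, and finally the $L^2$--$H^2$ interpolation with Proposition \ref{existence-swirl}. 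The only cosmetic difference is that you invoke Proposition \ref{uniform-swirl} in the middle step, which the paper does not need for this proposition (only Lemma \ref{lemma1} and Lemma \ref{weightinequality} are used there); this does not affect correctness.
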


\begin{proof}
Similar computations as that in the proof of Proposition \ref{further-estimate-3} yield
\be \label{4-62}
\int_0^1 i \xi \bar{U}(r) |\what{v^\theta}|^2 r \, dr
+ \int_0^1 \left| \frac{d}{dr}( r \what{v^\theta} ) \right|^2  \frac1r \, dr + \xi^2 \int_0^1 |\what{v^\theta}|^2 r \, dr
= i \xi \int_0^1 \what{G^\theta} \overline{\what{v^\theta}} r \, dr .
\ee
It follows from H\"older inequality and Lemma \ref{lemma1} that one has
\be \label{4-63}
\int_0^1 \left|  \frac{d}{dr} (r \what{v^\theta}) \right|^2 \frac1r \, dr + \xi^2 \int_0^1 |\what{v^\theta}|^2 r \, dr
\leq C \int_0^1 |\what{G^\theta}|^2 r \, dr
\ee
and
\be \label{4-64}
\Phi \int_0^1 (1 - r^2) |\what{v^\theta}|^2 r \, dr
\leq C \int_0^1 |\what{G^\theta}|^2 r \, dr.
\ee
Hence, it holds that
\be \label{4-65} \ba
\int_0^1 |\what{v^\theta}|^2 r \, dr &  \leq C \left( \int_0^1 (1  - r^2) |\what{v^\theta} |^2 r \, dr   \right)^{\frac23} \left( \int_0^1 \left| \frac{d}{dr} ( r \what{v^\theta} ) \right|^2 \frac1r \, dr  \right)^{\frac13}  \\
& \leq C \Phi^{-\frac23} \int_0^1 |\what{G^\theta}|^2 r \, dr .
\ea
\ee
This implies
\be \label{4-66}
\| \Bv^\theta \|_{L^2(\Omega)} \leq C \Phi^{-\frac13} \|\BG^\theta\|_{L^2(\Omega)}.
\ee
By the interpolation and Proposition \ref{existence-swirl}, one has
\be \label{4-67} \ba
\|\Bv^\theta \|_{H^{\frac{19}{12}}(\Omega)} & \leq C \|\Bv^\theta\|_{L^2(\Omega)}^{\frac{5}{24}} \|\Bv^\theta\|_{H^2(\Omega)}^{\frac{19}{24}}  \leq C \Phi^{-\frac{5}{72}} \|\BG^\theta \|_{L^2(\Omega)}^{\frac{5}{24}} \|\partial_z \BG^\theta\|_{L^2(\Omega)}^{\frac{19}{24}}.
\ea \ee
Hence the proof of Proposition \ref{further-estimate-4} is finished.
\end{proof}



\section{Existence and Uniqueness of solutions for nonlinear problem}\label{secnonlinear}
In this section, we prove the existence and uniqueness of strong axisymmetric solution of  the nonlinear problem \eqref{NS}-\eqref{fluxBC}, in particular, when $\BF$ and $\Phi$ are large.
The stream function $\psi$ satisfies the following equation
\be \label{ap-40} \ba
& \bar{U}(r) \partial_z (\mathcal{L} + \partial_z^2) \psi - (\mathcal{L} + \partial_z^2)^2 \psi \\
= & \partial_z F^r - \partial_r F^z  - \partial_r (v^r \omega^\theta) - \partial_z (v^z \omega^\theta) + \partial_z \left[ \frac{\left( v^\theta  \right)^2 }{r} \right]  ,
\ea\ee
supplemented with the  boundary condition,
\be \label{ap-41}
\psi(0, z) = \psi(1, z) = \partial_r \psi(1, z) = \mathcal{L}\psi(0, z)= 0 .
\ee
Here  $\omega^\theta = -\partial_r v^z + \partial_z v^r = ( \mathcal{L} + \partial_z^2) \psi$.

The swirl velocity  $\Bv^\theta = v^\theta \Be_\theta $ satisfies the equation
\be \label{ap-42}
\bar{U}(r)  \partial_z \Bv^\theta - \Delta \Bv^\theta = \BF^\theta  - ( v^r \partial_r + v^z \partial_z ) \Bv^\theta
- \frac{v^r}{r} \Bv^\theta
\ee
supplemented
with the  homogeneous boundary condition
\be \label{ap-43}
\Bv^\theta = 0\ \ \ \mbox{on}\ \partial \Omega.
\ee

\begin{proof}[Proof of Theorem \ref{mainresult1}] We divide the proof into three steps.

{\it Step 1. Iteration scheme.} The existence of solutions is proved via an iteration method. Let $\BF^* = F^r \Be_r + F^z \Be_z$. For any given $\BF =  \left(  \BF^* , \BF^\theta  \right) \in  L^2  (\Omega) \times L^2(\Omega)$, there exists a unique solution $(\psi, \Bv^\theta)$ to  the combined linear problem \eqref{vswirl}, \eqref{BC-swirl}, and \eqref{2-0-4-1}--\eqref{2-0-4-4}, and we denote this solution by $ \mathcal{T} \BF$.
Let
\be \nonumber
\BPsi_0
= \mathcal{T} \BmcF, \quad \BPsi_n=(\psi_n , \Bv_n^\theta),
\ee
and
\be \label{ap-50}
\BPsi_{n+1}
= \BPsi_0 + \mathcal{T} (\BF^*_n , \BF^\theta_n)
\ee
with
\be
\BF^*_n = \left[ - v_n^z \omega_n^\theta + \frac{(v_n^\theta)^2 }{r} \right]\Be_r +  v_n^r \omega_n^\theta \Be_z ,\ \ \ \
\BF^\theta_n = -(v_n^r \partial_r + v_n^z \partial_z) \Bv^\theta_n - \frac{v_n^r}{r} \Bv_n^\theta,
   \ee
   where
\be \nonumber
v_i^r =   \partial_z \psi_i , \ \ \ \ v_i^z = -\frac{\partial_r(r \psi_i)}{r} ,\ \ \ \ v_i^\theta = \Bv_i^\theta \cdot \Be_\theta,\ \ \ \ \ \omega_i^\theta = - \partial_r v_i^z + \partial_z v_i^r, \ \  \ i \in \mathbb{N}.
\ee

Set
\be \nonumber
S =  \left\{  (\psi, \Bv^\theta) \in H_*^3(D) \times H^2(\Omega) \left|  \ba & \|\Bv^*\|_{H^{\frac{19}{12}} (\Omega)} \leq 2 C_1 \Phi^{\frac{1}{96}},  \ \ \ \|\Bv^r \|_{L^2(\Omega)} \leq \Phi^{-\frac{15}{32}} \\
& \|\Bv^\theta \|_{H^{\frac{19}{12}}(\Omega)}\leq 2C_2 \Phi^{\frac{1}{96}}, \  \|\partial_z \Bv^\theta\|_{L^2(\Omega)} \leq  \Phi^{-\frac{5}{16}}                 \ea  \right.   \right\}.
\ee
Here $C_1$ and $C_2$ are the two constants indicated in Propositions \ref{existence-stream}-\ref{existence-swirl}.  Under the assumption that $\|\BF\|_{L^2(\Omega)} \leq \Phi^{\frac{1}{96}}$, according to Propositions \ref{existence-stream}-\ref{existence-swirl}, Propositions \ref{further-estimate-1}-\ref{further-estimate-3}, one has $\BPsi_0 \in S$, when $\Phi$ is large enough. Assume that
$\BPsi_n \in S $, our aim is to prove  $\BPsi_{n+1} \in S $.

{\it Step 2. Estimate for the velocity field and existence.}
Denote $\Bv^*_i = v_i^r \Be_r + v_i^z \Be_z $.
It  follows from Sobolev embedding inequalities that the estimates
\be \label{ap-51} \ba
 & \|v_n^r  \omega_n^\theta \|_{L^2(\Omega) }
 \leq C \|v_n^r \|_{L^{12}(\Omega)} \|\omega_n^\theta \|_{L^{\frac{12}{5}}(\Omega)} \leq C \|\Bv_n^r\|_{L^2(\Omega)}^{\frac16} \|\Bv_n^r\|_{L^\infty(\Omega)}^{\frac56}  \|\Bv^*_n\|_{H^{\frac{19}{12}}(\Omega)}   \\
 \leq & C  \|\Bv_n^r\|_{L^2(\Omega)}^{\frac16}\|\Bv^*_n\|_{H^{\frac{19}{12}}(\Omega)}^{\frac{11}{6}}
  \leq   C \Phi^{-\frac{15}{32} \cdot \frac{1}{6}} \Phi^{ \frac{1}{96} \cdot \frac{11}{6}}
\ea \ee
and
\be \label{ap51-1}
 \| v_n^z \omega_n^\theta \|_{L^2(\Omega)} \leq  \|v_n^z \|_{L^\infty(\Omega)} \| \omega_n^\theta\|_{L^2(\Omega)}
\leq C  \|\Bv^*_n\|_{H^{\frac{19}{12}}(\Omega)}^2 \leq  C \Phi^{\frac{1}{48}}
 \ee
 hold.
Moreover, one has
\be \label{ap51-2}
\left\| v_n^\theta \frac{v_n^\theta}{r} \right \|_{L^2(\Omega)}
\leq C \|\Bv_n^\theta\|_{L^\infty(\Omega)} \left\| \frac{\Bv_n^\theta}{r}  \right\|_{L^2(\Omega)}
\leq C \|\Bv_n^\theta \|_{H^{\frac{19}{12}} (\Omega) } \|\Bv_n^\theta \|_{H^1(\Omega)}
\leq C \Phi^{\frac{1}{48}}.
\ee

Similarly, using Sobolev embedding inequalities gives
\be \label{ap-54} \ba
& \| v_n^r \partial_r \Bv_n^\theta\|_{L^2(\Omega)}
\leq C \|v_n^r\|_{L^\infty(\Omega)} \|\partial_r \Bv_n^\theta\|_{L^2(\Omega)}
\leq C \|v_n^r\|_{L^2(\Omega)}^{\frac{1}{19}} \|\Bv^*\|_{H^{\frac{19}{12}}(\Omega)}^{\frac{18}{19} } \|\Bv_n^\theta \|_{H^1(\Omega)} \\
\leq & C \Phi^{-\frac{15}{32} \cdot \frac{1}{19}} \Phi^{\frac{1}{96} \cdot \frac{37}{19}}
\ea \ee
and
\be \label{ap-55} \ba
& \| v_n^z \partial_z \Bv_n^\theta \|_{L^2(\Omega)}
\leq \|v_n^z \|_{L^\infty(\Omega)} \| \partial_z \Bv_n^\theta\|_{L^2(\Omega)}
\leq C \|\Bv^*_n \|_{H^{\frac{19}{12}}(\Omega)} \|\partial_z \Bv_n^\theta\|_{L^2(\Omega)}
\leq  C \Phi^{\frac{1}{96}} \Phi^{-\frac{5}{16}}.
\ea \ee
Furthermore, it holds that
\be \label{ap-56}
\left \|v_n^r \frac{\Bv_n^\theta}{r} \right \|_{L^2(\Omega)}
\leq C \|v_n^r \|_{L^\infty(\Omega)} \left\| \frac{\Bv_n^\theta }{r} \right\|_{L^2(\Omega)}
\leq C \Phi^{-\frac{15}{32}\cdot \frac{1}{19}} \Phi^{\frac{1}{96} \cdot \frac{37}{19}} .
\ee

Combining the estimates \eqref{ap-51}--\eqref{ap-56} and applying Propositions \ref{existence-stream}-\ref{existence-swirl}, \ref{further-estimate-1} yield
\be \label{ap-58}
\| \Bv^*_{n+1} \|_{H^{\frac{19}{12}} (\Omega)} \leq C_1 \Phi^{\frac{1}{96}} + C_3 \left(   \Phi^{-\frac{15}{32}\cdot \frac{1}{6}} \Phi^{ \frac{1}{96} \cdot \frac{11}{6}}
 +  \Phi^{-\frac{1}{30}}\Phi^{\frac{1}{48}} \right)
\ee
and
\be \label{ap-59}
\|\Bv_{n+1}^\theta \|_{H^{\frac{19}{12}}(\Omega)} \leq C_2 \Phi^{\frac{1}{96}} + C_4 \left( \Phi^{-\frac{15}{32}\cdot \frac{1}{19}} \Phi^{\frac{1}{96} \cdot  \frac{37}{19}} +   \Phi^{\frac{1}{96}} \Phi^{-\frac{5}{16}} \right).
\ee
Moreover, it follows from Propositions \ref{further-estimate-1}--\ref{further-estimate-3} that one has
\be \label{ap-60}
\|\Bv_{n+1}^r\|_{L^2(\Omega) } \leq C_5 \max \left\{ \Phi^{-\frac12}, \ \Phi^{-\frac23}   \right\}
\left(  \Phi^{\frac{1}{96}} +   \Phi^{-\frac{15}{32} \cdot \frac{1}{6}} \Phi^{ \frac{1}{96} \cdot \frac{11}{6}}
 +  \Phi^{-\frac{1}{30}}\Phi^{\frac{1}{48}}             \right)
\ee
and
\be \label{ap-61}
\| \partial_z \Bv_{n+1}^\theta \|_{L^2(\Omega)}
\leq C_6 \Phi^{-\frac13} \left( \Phi^{\frac{1}{96}} +  \Phi^{-\frac{15}{32} \cdot \frac{1}{19}} \Phi^{\frac{1}{96} \cdot  \frac{37}{19}} +   \Phi^{\frac{1}{96}} \Phi^{-\frac{5}{16}}                 \right).
\ee

Choose a constant  $\Phi_0$  large enough such that
\be \nonumber
C_3 \left(   \Phi_0^{-\frac{15}{32}\cdot \frac{1}{6}} \Phi_0^{ \frac{1}{96} \cdot \frac{11}{6}}
 +  \Phi_0^{-\frac{1}{30}}\Phi_0^{\frac{1}{48}} \right)< C_1\Phi_0^{\frac{1}{96}}, \ \ \ \  C_4 \left( \Phi_0^{-\frac{15}{32}\cdot \frac{1}{19}} \Phi_0^{\frac{1}{96} \cdot  \frac{37}{19}} +   \Phi_0^{\frac{1}{96}} \Phi_0^{-\frac{5}{16}} \right) < C_2 \Phi_0^{\frac{1}{96}},
\ee
and
\be \nonumber
C_5 \Phi_0^{-\frac12} \left(  \Phi_0^{\frac{1}{96}} +   \Phi_0^{-\frac{15}{32} \cdot \frac{1}{6}} \Phi_0^{ \frac{1}{96} \cdot \frac{11}{6}}
 +  \Phi_0^{-\frac{1}{30}}\Phi_0^{\frac{1}{48}}             \right)  < \Phi_0^{-\frac{15}{32}},
 \ee
 \be \nonumber
 C_6 \Phi_0^{-\frac13} \left( \Phi_0^{\frac{1}{96}} +  \Phi_0^{-\frac{15}{32} \cdot \frac{1}{19}} \Phi_0^{\frac{1}{96} \cdot  \frac{37}{19}} +   \Phi_0^{\frac{1}{96}} \Phi_0^{-\frac{5}{16}}                 \right) < \Phi_0^{-\frac{5}{16}}.
\ee
When $\Phi \geq \Phi_0$, the estimates \eqref{ap-58}--\eqref{ap-61} imply that $(\psi_{n+1}, \Bv_{n+1}^\theta) \in S $.  By mathematical induction,  $\BPsi_n\in S $ for every $n \in \mathbb{N}$.
Note that $\Bv_n = \Bv_n^* + \Bv_n^\theta $. The above estimates show that
\be \label{ap-71}
\|\Bv_n\|_{H^{\frac{19}{12} }(\Omega)} \leq C_0 \Phi^{\frac{1}{96}}\ \ \ \, \text{for every}\,\, n \in \mathbb{N}.
\ee
Since $\{ \Bv_{n} \}$ is uniformly bounded in $H^{\frac{19}{12}}(\Omega)$, there exists a vector-valued function $\Bv \in H^{\frac{19}{12}}(\Omega)$ such that
$ \Bv_{n} \rightharpoonup \Bv $ in $H^{\frac{19}{12}}(\Omega)$ and
\be \nonumber
\|\Bv \|_{H^2(\Omega)} \leq C_0  \Phi^{\frac{1}{96}}.
\ee
Meanwhile, as proved in \cite{WXX1},  the equation \eqref{ap-50} implies that
\be \label{ap-72}
{\rm curl}~\left( (\bBU \cdot \nabla ) \Bv_{n+1} + ( \Bv_{n+1} \cdot \nabla ) \bBU - \Delta \Bv_{n+1} + (\Bv_{n} \cdot \nabla ) \Bv_{n} - \BF      \right) = 0.
\ee
Taking the limit of the equation \eqref{ap-72} yields
\be \label{ap-74}
 {\rm curl}~\left( (\bBU \cdot \nabla ) \Bv + ( \Bv \cdot \nabla ) \bBU - \Delta \Bv  + (\Bv  \cdot \nabla ) \Bv  - \BF      \right) = 0.
\ee
Hence, there exists a function $P$ with $\nabla P \in L^2(\Omega)$,  satisfying
\be \label{ap-73}
 (\bBU \cdot \nabla ) \Bv  + ( \Bv  \cdot \nabla ) \bBU - \Delta \Bv  + (\Bv  \cdot \nabla ) \Bv + \nabla P  = \BF.
\ee
Moreover, according to the regularity estimates in  Propositions \ref{existence-stream}-\eqref{existence-swirl}, it holds that
\be \label{ap-74} \ba
& \|\Bv\|_{H^2(\Omega)} \leq C (1 + \Phi^{\frac14} ) \|\BF\|_{L^2(\Omega)} + C (1 + \Phi^{\frac14}) \|(\Bv \cdot \nabla )\Bv\|_{L^2(\Omega)} \\
\leq  & C  \Phi^{\frac14} \Phi^{\frac{1}{96}} + C \Phi^{\frac14} \|\Bv\|_{H^{\frac{19}{12}} (\Omega)}^2 \leq C \Phi^{\frac{7}{24}}.
\ea
\ee
Let $\Bu = \Bv + \bBU$. Then $\Bu$ is a solution of the problem \eqref{NS}-\eqref{fluxBC}.

{\it Step 3. Uniqueness.} Suppose there are two axisymmetric solutions $\Bu$ and $\tilde{\Bu}$ of the problem \eqref{NS}--\eqref{fluxBC}, satisfying
\be \label{ap-77}
\|\Bu - \bBU \|_{H^{\frac{19}{12}}(\Omega)} \leq C_0 \Phi^{\frac{1}{96}}, \ \ \ \|\tilde{\Bu} - \bBU\|_{H^{\frac{19}{12}}(\Omega)} \leq C_0 \Phi^{\frac{1}{96}},
\ee
and
\be \label{ap-78}
\|\tilde{\Bu}^r \|_{L^2(\Omega)} \leq \Phi^{-\frac{15}{32}}.
\ee
Let
\be \nonumber
\Bv = \Bu - \bBU= v^r \Be_r + v^\theta \Be_\theta + v^z \Be_z, \ \ \ \ \tilde{\Bv} = \tilde{\Bu}- \bBU= \tilde{v}^r \Be_r +
\tilde{v}^\theta \Be_\theta + \tilde{v}^z \Be_z .
\ee
Suppose that $\psi$ and $\tilde{\psi}$  are the stream functions associated with the vector fields $\Bv$ and $\tilde{\Bv}$, respectively. Then $\psi- \tilde{\psi}$ satisfies the following equation,
 \be \nonumber
 \ba
 & \bar{U}(r) \partial_z (\mathcal{L} + \partial_z^2)(\psi - \tilde{\psi} ) - (\mathcal{L} + \partial_z^2)^2 (\psi -\tilde{\psi}) \\
 = & - \partial_r (v^r \omega^\theta - \tilde{v}^r \tilde{\omega}^\theta ) - \partial_z (v^z \omega^\theta - \tilde{v}^z \tilde{\omega}^\theta)
 + 2 \partial_z \left( \frac{v^\theta}{r} v^\theta - \frac{\tilde{v}^\theta}{r}   \tilde{v}^\theta   \right).
 \ea
 \ee
It follows from Sobolev's embedding inequalities that one has
\be \label{ap-81}\ba
& \|v^r \omega^\theta - \tilde{v}^r \tilde{\omega}^\theta \|_{L^2(\Omega)}
\leq \| v^r - \tilde{v}^r  \|_{L^6(\Omega)} \|\omega^\theta\|_{L^3(\Omega)}
+ \| \tilde{v}^r \|_{L^6(\Omega)} \| \omega^\theta - \tilde{\omega}^\theta \|_{L^3(\Omega)} \\
\leq & C \|\Bv^r- \tilde{\Bv}^r \|_{H^{\frac{19}{12}} (\Omega)} \|\Bv^* \|_{H^{\frac{19}{12}} (\Omega)}
+ C \|\tilde{\Bv}^r \|_{L^2(\Omega)}^{\frac13} \|\tilde{\Bv}^r  \|_{H^{\frac{19}{12}} (\Omega)}^{\frac23} \|\Bv^* - \tilde{\Bv}^* \|_{H^{\frac{19}{12}} (\Omega)} \\
\leq & C \|\Bv^r- \tilde{\Bv}^r \|_{H^{\frac{19}{12}} (\Omega)} \Phi^{\frac{1}{96} } + C \|\Bv^* - \tilde{\Bv}^*\|_{H^{\frac{19}{12}}(\Omega)} \Phi^{-\frac{15}{32}\cdot \frac13+ \frac{1}{96} \cdot \frac23 }
\ea
\ee
and
\be \label{ap-82}
\ba
& \|v^z \omega^\theta - \tilde{v}^z \tilde{\omega}^\theta \|_{L^2(\Omega)} \leq
C \|v^z - \tilde{v}^z \|_{L^6(\Omega)} \|\omega^\theta\|_{L^3(\Omega)} + C \|\tilde{v}^z \|_{L^6(\Omega)} \|\omega^\theta - \tilde{\omega}^\theta\|_{L^3(\Omega)} \\
\leq &  C \|\Bv^* - \tilde{\Bv}^* \|_{H^{\frac{19}{12}} (\Omega)} \left( \|\Bv^* \|_{H^{\frac{19}{12}} (\Omega)} + \| \tilde{\Bv}^* \|_{H^{\frac{19}{12}} (\Omega)} \right) \leq C \|\Bv^* - \tilde{\Bv}^* \|_{H^{\frac{19}{12}} (\Omega)} \Phi^{\frac{1}{96}}.
\ea
\ee
Similarly, it holds that
\be \label{ap-83} \ba
& \left\| \frac{v^\theta}{r} v^\theta - \frac{\tilde{v}^\theta}{r} \tilde{v}^\theta  \right\|_{L^2(\Omega)}
\leq C \| \Bv^\theta - \tilde{\Bv}^\theta\|_{H^{\frac{19}{12}}(\Omega)} \left(  \|\Bv^\theta \|_{H^{\frac{19}{12}}(\Omega)} +
\| \tilde{\Bv}^\theta \|_{H^{\frac{19}{12} } (\Omega)}      \right) \\
\leq & C \| \Bv^\theta - \tilde{\Bv}^\theta\|_{H^{\frac{19}{12}}(\Omega)} \Phi^{\frac{1}{96} }.
\ea
\ee

Combining the above estimates \eqref{ap-81}--\eqref{ap-83}, it follows from Propositions \ref{further-estimate-1}--\ref{further-estimate-2} that one has
\be \label{ap-85} \ba
\Phi^{\frac{1}{80}} \|\Bv^r - \tilde{\Bv}^r \|_{H^{\frac{19}{12}} (\Omega)}
\leq  C  \Phi^{\frac{1}{80}} \Phi^{-\frac{1}{40}} &  \left[   \Phi^{\frac{1}{96}} \|\Bv^r- \tilde{\Bv}^r\|_{H^{\frac{19}{12}} (\Omega)}
+ \Phi^{-\frac{43}{96} \cdot  \frac13} \|\Bv^* - \tilde{\Bv}^* \|_{H^{\frac{19}{12}} (\Omega)}   \right. \\
& \left. \ \ \ \ + \Phi^{\frac{1}{96}}  \|\Bv^* - \tilde{\Bv}^* \|_{H^{\frac{19}{12}} (\Omega)} + \Phi^{\frac{1}{96}} \|\Bv^\theta- \tilde{\Bv}^\theta \|_{H^{\frac{19}{12} } (\Omega)}        \right]
\ea \ee
and
\be \label{ap-86} \ba
\|\Bv^z - \tilde{\Bv}^z \|_{H^{\frac{19}{12}} (\Omega)}
\leq &  C  \Phi^{\frac{1}{96}- \frac{1}{80}} \Phi^{\frac{1}{80} } \|\Bv^r - \tilde{\Bv}^r \|_{H^{\frac{19}{12}} (\Omega)} +
C \Phi^{-\frac{43}{96} \cdot \frac13} \|\Bv^* - \tilde{\Bv}^* \|_{H^{\frac{19}{12}} (\Omega)} \\
&\ \ + C \Phi^{-\frac{1}{30}} \Phi^{\frac{1}{96} } \left[ \|\Bv^* - \tilde{\Bv}^* \|_{H^{\frac{19}{12}} (\Omega)}
+ \|\Bv^\theta - \tilde{\Bv}^\theta \|_{H^{\frac{19}{12} } (\Omega)} \right].
\ea
\ee

On the other hand, $\Bv^\theta - \tilde{\Bv}^\theta$ satisfies
\be \nonumber
\ba
& \bar{U}(r) \partial_z (\Bv^\theta - \tilde{\Bv}^\theta) - \Delta (\Bv^\theta - \tilde{\Bv}^\theta)\\
= & - \left( v^r \partial_r \Bv^\theta - \tilde{v}^r \partial_r \tilde{\Bv}^\theta \right) - 2 \left( v^r \frac{\Bv^\theta}{r} - \tilde{v}^r \frac{\tilde{\Bv}^\theta }{r}       \right) -\left(  \partial_r v^r \Bv^\theta - \partial_r \tilde{v}^r \tilde{\Bv}^\theta \right) -  \partial_z
\left(  v^z \Bv^\theta - \tilde{v}^z \tilde{\Bv}^\theta   \right).
\ea
\ee
It follows from Sobolev's embedding inequalities that  one has
\be \label{ap-87} \ba
& \| v^r \partial_r \Bv^\theta - \tilde{v}^r \partial_r \tilde{\Bv}^\theta \|_{L^2(\Omega)}
\leq \|v^r - \tilde{v}^r \|_{L^\infty(\Omega)} \|\partial_r \Bv^\theta \|_{L^2(\Omega)} + \|\tilde{v}^r \|_{L^\infty(\Omega)} \|\partial_r {\Bv}^\theta - \partial_r \tilde{v}^\theta \|_{L^2(\Omega) }\\
\leq & C \Phi^{\frac{1}{96}} \|\Bv^r - \tilde{\Bv}^r \|_{H^{\frac{19}{12}} (\Omega)} + C \|\tilde{\Bv}^r\|_{L^2(\Omega)}^{\frac{1}{19}}
\|\tilde{\Bv}^r \|_{H^{\frac{19}{12}} (\Omega)}^{\frac{18}{19}} \|\Bv^\theta - \tilde{\Bv}^\theta \|_{H^{\frac{19}{12} } (\Omega)} \\
\leq & C \Phi^{\frac{1}{96}- \frac{1}{80} } \Phi^{\frac{1}{80}} \|\Bv^r - \tilde{\Bv}^r \|_{H^{\frac{19}{12}} (\Omega)} +
C \Phi^{-\frac{15}{32} \cdot \frac{1}{19}} \Phi^{\frac{1}{96} \cdot \frac{18}{19} } \|\Bv^\theta - \tilde{\Bv}^\theta \|_{H^{\frac{19}{12} } (\Omega)}.
\ea
\ee
The similar computations give
\be \label{ap-88}
 \left\| v^r \frac{\Bv^\theta}{r} - \tilde{v}^r \frac{\tilde{\Bv}^\theta}{r} \right\|_{L^2(\Omega)} \leq
C \Phi^{\frac{1}{96}- \frac{1}{80} } \Phi^{\frac{1}{80}} \|\Bv^r - \tilde{\Bv}^r \|_{H^{\frac{19}{12}} (\Omega)} +
C \Phi^{-\frac{15}{32} \cdot \frac{1}{19}} \Phi^{\frac{1}{96} \cdot \frac{18}{19} } \|\Bv^\theta - \tilde{\Bv}^\theta \|_{H^{\frac{19}{12} } (\Omega)}
\ee
and
\be \label{ap-89} \ba
& \| \partial_r v^r \Bv^\theta - \partial_r \tilde{v}^r \tilde{\Bv}^\theta\|_{L^2(\Omega)} \\
\leq & C \|\partial_r v^r - \partial_r \tilde{v}^r \|_{L^2(\Omega)} \|\Bv^\theta \|_{L^\infty(\Omega)}
+ C \|\partial_r \tilde{v}^r\|_{L^2(\Omega)} \|\Bv^\theta - \tilde{\Bv}^\theta \|_{L^\infty(\Omega)} \\
\leq & C \|\Bv^r - \tilde{\Bv}^r \|_{H^{\frac{19}{12}} (\Omega)} \|\Bv^\theta \|_{H^{\frac{19}{12}} (\Omega)} + C \|\tilde{\Bv}^r \|_{L^2(\Omega)}^{\frac{7}{19}} \| \tilde{\Bv}^r \|_{H^{\frac{19}{12}} (\Omega)}^{\frac{12}{19} } \|\Bv^\theta - \tilde{\Bv}^\theta \|_{L^\infty(\Omega)}\\
\leq & C \Phi^{\frac{1}{96}- \frac{1}{80} } \Phi^{\frac{1}{80} }\|\Bv^r - \tilde{\Bv}^r \|_{H^{\frac{19}{12}} (\Omega)}
+ C \Phi^{-\frac{15}{32} \cdot \frac{7}{19} + \frac{1}{96} \cdot \frac{12}{19} }  \|\Bv^\theta - \tilde{\Bv}^\theta \|_{H^{\frac{19}{12}}(\Omega)} .
\ea
\ee
Moreover,
\be \label{ap-90} \ba
 & \| v^z \Bv^\theta - \tilde{v}^z \tilde{\Bv}^\theta \|_{L^2(\Omega) }
\leq  C \| \Bv^z - \tilde{\Bv}^z \|_{H^{\frac{19}{12}} (\Omega)}  \|\Bv^\theta \|_{H^{\frac{19}{12}} (\Omega)} +
C \|\tilde{\Bv}^z \|_{H^{\frac{19}{12}} (\Omega)} \| \Bv^\theta - \tilde{\Bv}^\theta \|_{H^{\frac{19}{12}} (\Omega)} \\
\leq & C \Phi^{\frac{1}{96}} \left( \| \Bv^z - \tilde{\Bv}^z \|_{H^{\frac{19}{12}} (\Omega)} + \| \Bv^\theta - \tilde{\Bv}^\theta \|_{H^{\frac{19}{12}} (\Omega)} \right),
\ea
\ee
and
\be \label{ap-91}
\ba
& \|\partial_z (v^z \Bv^\theta ) - \partial_z (\tilde{v}^z \tilde{\Bv}^\theta ) \|_{L^2(\Omega)} \\
\leq & C \Phi^{\frac{1}{96}} \left( \| \Bv^z - \tilde{\Bv}^z \|_{H^{\frac{19}{12}} (\Omega)} + \| \Bv^\theta - \tilde{\Bv}^\theta \|_{H^{\frac{19}{12}} (\Omega)} \right).
\ea
\ee
Combining the estimates \eqref{ap-87}--\eqref{ap-91}, it follows from Propositions \ref{further-estimate-3}--\ref{further-estimate-4} that
\be \label{ap-92} \ba
 \|\Bv^\theta - \tilde{\Bv}^\theta\|_{H^{\frac{19}{12}} (\Omega)}
\leq &   C \Phi^{\frac{1}{96}- \frac{1}{80} } \Phi^{\frac{1}{80}} \|\Bv^r - \tilde{\Bv}^r \|_{H^{\frac{19}{12}} (\Omega)} +
C \Phi^{-\frac{15}{32} \cdot \frac{1}{19}} \Phi^{\frac{1}{96} \cdot \frac{18}{19} } \|\Bv^\theta - \tilde{\Bv}^\theta \|_{H^{\frac{19}{12} } (\Omega)} \\
& \ + C \Phi^{-\frac{101}{608}}  \|\Bv^\theta - \tilde{\Bv}^\theta \|_{H^{\frac{19}{12}}(\Omega)} + C \Phi^{-\frac{17}{288}} \left(  \|\Bv^z - \tilde{\Bv}^z \|_{H^{\frac{19}{12}}(\Omega)}  + \|\Bv^\theta - \tilde{\Bv}^\theta \|_{H^{\frac{19}{12}}(\Omega)}  \right).
\ea
\ee

Combining the three estimates \eqref{ap-85}--\eqref{ap-86} and \eqref{ap-92} together gives  the uniqueness of the solution when $\Phi$ is large enough. Thus the proof of Theorem \ref{mainresult1} is finished.
\end{proof}



\section{Asymptotic Behavior}\label{sec-asymp}
In this section, we investigate the asymptotic behavior of solutions to \eqref{NS}--\eqref{fluxBC} and prove Theorem \ref{mainresult2}. The proof consists of two steps. In the first step, the asymptotic behavior of the solution which is a small perturbation of Hagen-Poiseuille flow is established. In the second step, the smallness requirement is removed since the solution be  a small perturbation of Hagen-Poiseuille flow at far fields when the condition \eqref{condu1} is satisfied.

Before giving the detailed proof of Theorem \ref{mainresult2}, we first state the uniform estimate of $\psi$ for the linear problem \eqref{2-0-4-1}--\eqref{2-0-4-4} when $f\in L_r^2(D)$.
\begin{pro} \label{uniformestimate}
Assume that $f(r, z) \in L_r^2(D)$, the solution $\psi$ obtained in Proposition \ref{existence-stream} belongs to
 $H_*^3(D)$ and satisfies
\be \label{5-101}
\|\Bv^*\|_{H^{\frac53}(\Omega)} \leq C \|f\|_{L_r^2(D)},  \ \ \ \ \ \ \
\|\Bv^*\|_{H^{2}(\Omega)} \leq C (1 + \Phi^{\frac14} ) \|f\|_{L_r^2(D)}
\ee
where the constant $C$ is  independent of $\Phi$.
\end{pro}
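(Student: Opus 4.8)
The plan is to derive Proposition~\ref{uniformestimate} from Proposition~\ref{existence-stream} by a soft reduction, exploiting the fact that the fourth order stream-function equation \eqref{2-0-4-1} sees the external force only through the single scalar $f=\partial_z F^r-\partial_r F^z$. Thus, given $f\in L_r^2(D)$, it will suffice to produce an axisymmetric vector field $\BF^*=F^r\Be_r+F^z\Be_z\in L^2(\Omega)$ with
\be\nonumber
\partial_z F^r-\partial_r F^z=f\qquad\text{and}\qquad\|\BF^*\|_{L^2(\Omega)}\leq C\|f\|_{L_r^2(D)},
\ee
where $C$ is independent of $\Phi$. Then Proposition~\ref{existence-stream} applied to this $\BF^*$ yields a solution $\psi\in H_*^3(D)$ of \eqref{2-0-4-1}--\eqref{2-0-4-4} satisfying $\|\Bv^*\|_{H^{5/3}(\Omega)}\leq C_1\|\BF^*\|_{L^2(\Omega)}$ and $\|\Bv^*\|_{H^2(\Omega)}\leq C_1(1+\Phi^{1/4})\|\BF^*\|_{L^2(\Omega)}$, which is \eqref{5-101}; and since any two forces with the same $f$ give the same $\psi$ by the uniqueness in Proposition~\ref{existence-stream}, this coincides with the solution named in the statement.

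The construction I would use is $F^r\equiv0$ together with the radial primitive $F^z(r,z):=\int_r^1 f(s,z)\,ds$, so that $-\partial_r F^z=f$, $F^z(1,z)=0$, and $F^z$ depends only on $(r,z)$. The only genuinely non-formal point is controlling $F^z$ near the axis $r=0$, which I would handle by the weighted Cauchy--Schwarz inequality
\be\nonumber
|F^z(r,z)|^2\leq\Big(\int_r^1\frac{ds}{s}\Big)\Big(\int_r^1|f(s,z)|^2 s\,ds\Big)\leq\ln\frac1r\int_0^1|f(s,z)|^2 s\,ds;
\ee
integrating against $r\,dr\,dz$ and using $\int_0^1 r\ln(1/r)\,dr=\frac14$ gives $\|F^z\|_{L_r^2(D)}^2\leq\frac14\|f\|_{L_r^2(D)}^2$, i.e. $\|\BF^*\|_{L^2(\Omega)}\leq\frac12\|f\|_{L_r^2(D)}$. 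To be fully rigorous I would first carry this out for $f\in C^\infty(D)$, where $F^z$ is classically defined and $\partial_r F^z=-f$, and then pass to the limit using the density of $C^\infty(D)$ in $L_r^2(D)$ together with the linearity and boundedness of the solution operator $\BF^*\mapsto\psi$ from Proposition~\ref{existence-stream}.

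I do not expect a serious obstacle here: the entire content is the elementary observation that an $L_r^2(D)$ right-hand side of \eqref{2-0-4-1} admits an $L^2(\Omega)$ vector potential of comparable norm, and the only subtlety is the logarithmically weighted bound on the radial primitive at $r=0$ recorded above. One could instead bypass the potential and re-run the $\xi$-by-$\xi$ energy estimates of \cite{WXX1} that yield Propositions~\ref{propcase0} and~\ref{existence-stream} with the general right-hand side $\what f$ replacing $i\xi\what{F^r}-\frac{d}{dr}\what{F^z}$, absorbing the new term $\Re\int_0^1\what f\,\overline{\hat\psi}\,r\,dr$ by the Poincar\'e-type inequality $\int_0^1|\hat\psi|^2 r\,dr\leq C\int_0^1|\mathcal{L}\hat\psi|^2 r\,dr$ coming from the boundary conditions \eqref{FBC}; but this route is longer, and the potential construction is both shorter and more transparent.
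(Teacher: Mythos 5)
Your proposal is correct and follows essentially the same route as the paper: the paper also writes $f=-\partial_r F^z$ with $F^z(r,z)=\int_r^1 f(s,z)\,ds$ (and $F^r=0$), bounds $|F^z|$ by $|\ln r|^{1/2}\bigl(\int_0^1|f|^2s\,ds\bigr)^{1/2}$ via Cauchy--Schwarz, integrates against $r\,dr\,dz$, and then invokes Proposition~\ref{existence-stream}. Your extra remarks on density and on the alternative direct energy estimate are fine but not needed.
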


\begin{proof}
Let $F(r, z) = - \int_r^1 f(r, z) \, dz.$ Hence, $f(r, z) = \partial_r F(r, z)$. Moreover, for every $(r, z) \in D$, by H\"older inequality,
\be \nonumber
|F(r , z) | \leq \left( \int_0^1 |f(s, z)|^2  s \, ds \right)^{\frac12} | \ln r|^{\frac12},
\ee
which implies that
\be \nonumber
\|F\|_{L_r^2(D)}^2 \leq C \int_{-\infty}^{+\infty} \int_0^1 |f|^2 s \, ds  \cdot \int_0^1 |\ln r| r \, dr \leq C \|f\|_{L_r^2(D)}^2.
\ee
This, together with the regularity estimates in Proposition \ref{existence-stream}, finishes the proof of Proposition \ref{uniformestimate}.
\end{proof}

The following lemma on the estimate between the stream function and the velocity field is needed in the proof of Theorem \ref{mainresult2}.
\begin{lemma}\label{newlemma}
Assume that $\Bv^* = v^r \Be_r + v^z \Be_z \in H^2(\Omega)$ and $\Bv^*$ is axisymmetric. Let $\psi \in H_*^3(D)$ be the corresponding stream function of the velocity field $\Bv^*$. It holds that
\be \label{5-105}
\|\mathcal{L} \psi \|_{L_r^2(D)} + \|\partial_z^2 \psi \|_{L_r^2(D)} + \left\| \frac{1}{r} \frac{\partial}{\partial r}(r \psi )  \right\|_{L_r^2(D)} + \|\partial_z \psi \|_{L_r^2(D)} +
\|\psi \|_{L_r^2(D)} \leq C \|\Bv^*\|_{H^1(\Omega)}
\ee
and
\be \label{5-106}
\| \psi \Be_r\|_{H^2(\Omega)} \leq C \|\Bv^*\|_{H^1(\Omega)}.
\ee
\end{lemma}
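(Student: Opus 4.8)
The plan is to work on the Fourier side in $z$, just as in Section~\ref{someestimates}, since both inequalities are statements about $L^2$-type norms that transform nicely. For a fixed $\xi$, the stream function $\hat\psi(r,\xi)$ and the velocity components satisfy the Biot--Savart type relations $\what{v^r} = i\xi\hat\psi$ and $\what{v^z} = -\tfrac1r\tfrac{d}{dr}(r\hat\psi)$, together with the boundary conditions \eqref{FBC}. The strategy is to express each quantity on the left-hand side of \eqref{5-105} through $\what{v^r}$ and $\what{v^z}$ and their $r$-derivatives. First I would note $\|\partial_z^2\psi\|_{L^2_r}$ and $\|\partial_z\psi\|_{L^2_r}$ are controlled directly: on the Fourier side these are $\|\xi\what{v^r}\|$ and $\|\what{v^r}\|$ (up to the factor $i$), both bounded by $\|\Bv^*\|_{H^1(\Omega)}$. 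Next, $\tfrac1r\tfrac{d}{dr}(r\hat\psi) = -\what{v^z}$, so $\|\tfrac1r\partial_r(r\psi)\|_{L^2_r} = \|v^z\|_{L^2(\Omega)} \le \|\Bv^*\|_{H^1}$.

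The two genuinely nontrivial terms are $\|\psi\|_{L^2_r(D)}$ and $\|\mathcal L\psi\|_{L^2_r(D)}$. For $\|\psi\|_{L^2_r}$: since $\psi(0,z)=0$ (from \eqref{2-0-4-2}--\eqref{2-0-4-3}), one can write $\hat\psi(r,\xi) = \int_0^r \partial_s\hat\psi(s,\xi)\,ds$, and use a Hardy-type inequality (the paper's Appendix should contain a weighted Hardy/Hardy--Leray inequality, cf.\ Lemma~\ref{lemmaHLP}, Lemma~\ref{lemma1}) to bound $\int_0^1 |\hat\psi|^2 r\,dr$ by $\int_0^1 |\partial_r\hat\psi|^2 r\,dr$ plus lower-order terms; then $\partial_r\hat\psi$ is recovered from $v^z = -\tfrac1r\partial_r(r\psi)$, i.e.\ $\partial_r\hat\psi = -\what{v^z} - \hat\psi/r$, and another Hardy inequality closes the loop, giving $\|\psi\|_{L^2_r} \le C\|v^z\|_{L^2(\Omega)} \le C\|\Bv^*\|_{H^1}$. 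For $\|\mathcal L\psi\|_{L^2_r}$: observe $\mathcal L\psi = \partial_r(\tfrac1r\partial_r(r\psi)) = -\partial_r v^z$, hence $\|\mathcal L\psi\|_{L^2_r(D)} = \|\partial_r v^z\|_{L^2(\Omega)} \le \|\Bv^*\|_{H^1(\Omega)}$ directly. This is the clean identity that makes \eqref{5-105} work, so in fact the main point is just to assemble these term-by-term identities and invoke the Hardy inequalities from the Appendix for the $\|\psi\|_{L^2_r}$ piece.

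For \eqref{5-106}, the point is that $\psi\Be_r$ is a genuine vector field on $\Omega$ (not an axisymmetric scalar in the usual sense), so $\|\psi\Be_r\|_{H^2(\Omega)}$ expands, in cylindrical coordinates, into weighted integrals of $\psi$ and its derivatives up to second order, including the dangerous zeroth-order term $\psi/r^2$ coming from the $\Be_r$ direction (the Laplacian of $f\Be_r$ in cylindrical coordinates produces a $-f/r^2$ term, and $H^2$ of a vector field involves such curvature terms). So I would expand $\|\psi\Be_r\|_{H^2(\Omega)}^2$ as a sum of terms of the form $\int (\text{derivative of }\psi)^2\, r\,dr\,dz$ together with $\int \tfrac{|\partial_z\psi|^2}{r}\,dr\,dz$, $\int \tfrac{|\psi|^2}{r^3}\,dr\,dz$, etc. Each of these must be bounded by $\|\Bv^*\|_{H^1(\Omega)}$: the ones without negative powers of $r$ are handled by \eqref{5-105} and the relations $v^r=\partial_z\psi$, $v^z=-\tfrac1r\partial_r(r\psi)$; the singular ones are controlled by Hardy inequalities exploiting $\psi(0,z)=0$ and $\mathcal L\psi(0,z)=0$ — indeed $\int \tfrac{|\psi|^2}{r^3}\,dr\,dz \le C\int\tfrac{|\partial_z\psi|^2}{r}\,dr\,dz$-type bounds or, better, bounds in terms of $v^r/r = \partial_z\psi/r$ which is itself in $L^2$ since $v^r(0,z)=0$ and $v^r\in H^1$ (this is exactly the kind of quantity for which a weighted inequality such as Lemma~\ref{lemma1} is designed).

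The main obstacle I anticipate is the bookkeeping for \eqref{5-106}: writing out $\|\psi\Be_r\|_{H^2(\Omega)}^2$ in cylindrical coordinates carefully, identifying precisely which weighted norms of $\psi$ appear, and matching each one to either \eqref{5-105} or a Hardy inequality available in the Appendix; one must be careful that the compatibility conditions $\psi(0,z)=\mathcal L\psi(0,z)=0$ really suffice to absorb all the $1/r^k$ weights that arise, rather than needing information on $\psi/r^3$ that is not directly available. The estimate \eqref{5-105} itself is comparatively routine once the identities $\mathcal L\psi = -\partial_r v^z$, $\tfrac1r\partial_r(r\psi)=-v^z$, $\partial_z\psi = v^r$ are in hand.
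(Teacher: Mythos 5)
Your treatment of \eqref{5-105} is correct and in fact a little more direct than the paper's: you read off $\partial_z\psi=v^r$, $\tfrac1r\partial_r(r\psi)=-v^z$, $\mathcal{L}\psi=-\partial_r v^z$, $\partial_z^2\psi=\partial_z v^r$ and close the remaining term $\|\psi\|_{L_r^2(D)}$ with the Poincar\'e-type inequality of Lemma \ref{lemma1}, whereas the paper instead tests the identity $(\mathcal{L}+\partial_z^2)\psi=\partial_z v^r-\partial_r v^z$ against $(\mathcal{L}+\partial_z^2)\psi\, r$ and integrates by parts to get $\|\mathcal{L}\psi\|_{L_r^2}^2+2\|\tfrac1r\partial_r\partial_z(r\psi)\|_{L_r^2}^2+\|\partial_z^2\psi\|_{L_r^2}^2\le C\|\Bv^*\|_{H^1}^2$ before invoking Lemma \ref{lemma1}. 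Either route is fine for the first estimate.

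For \eqref{5-106}, however, your proposal has a genuine gap. You plan to expand $\|\psi\Be_r\|_{H^2(\Omega)}^2$ in cylindrical coordinates and absorb the singular weights by Hardy inequalities, but you never actually control the worst terms, and the sample bounds you offer are not right: an inequality of the form $\int |\psi|^2 r^{-3}\,dr\,dz\le C\int|\partial_z\psi|^2 r^{-1}\,dr\,dz$ cannot hold (a $z$-derivative does not improve the $r$-singularity), and the quantity $v^r/r=\partial_z\psi/r$ only controls $\partial_z\psi$ with weight $r^{-1}$, not $\psi/r^2$ or $\partial_r\psi/r$. Whether the compatibility conditions $\psi(0,z)=\mathcal{L}\psi(0,z)=0$ suffice to tame all the $r^{-k}$ weights is exactly the characterization problem for axisymmetric vector fields (cf.\ \cite{Liu-Wang}), and you correctly flag it as the obstacle without resolving it. The paper sidesteps all of this with one observation you are missing: for a field of the form $\psi\Be_r$ one has the exact identity $\Delta(\psi\Be_r)=(\mathcal{L}+\partial_z^2)\psi\,\Be_r$ in $\Omega$, with $\psi=0$ on $\partial\Omega$, so the standard $H^2$ elliptic estimate for the Dirichlet problem gives $\|\psi\Be_r\|_{H^2(\Omega)}\le C\|(\mathcal{L}+\partial_z^2)\psi\|_{L_r^2(D)}\le C\|\partial_z v^r-\partial_r v^z\|_{L^2(\Omega)}\le C\|\Bv^*\|_{H^1(\Omega)}$, with no cylindrical bookkeeping at all. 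Without this (or a complete weighted-Hardy analysis of every singular term), your argument for \eqref{5-106} does not close.
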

\begin{proof}
Recall that the stream function  $\psi$ of the vector field $\Bv^*$ satisfies
\be \label{5-107}
(\mathcal{L} + \partial_z^2) \psi = \partial_z v^r - \partial_r v^z  \ \ \ \ \mbox{in}\ D.
\ee
Multiplying \eqref{5-107} by $(\mathcal{L} + \partial_z^2) \psi r $ and integrating over $D$, integration by parts gives
\be \label{5-108} \ba
& \int_{-\infty}^{+\infty} \int_0^1 \left[ |\mathcal{L} \psi |^2 r  + 2 |\partial_r \partial_z (r \psi )|^2 \frac1r + |\partial_z^2 \psi |^2 r  \right] \, dr dz \\
 \leq &  C \int_{-\infty}^{+\infty} \int_0^1 |\partial_z v^r - \partial_r v^z |^2 r \, dr dz
 \leq C \|\Bv^*\|_{H^1(\Omega)}^2 .
\ea \ee
Furthermore, by Lemma \ref{lemma1} and \eqref{5-108}, one has
\be  \label{5-109} \ba
 & \left\| \frac{1}{r} \frac{\partial}{\partial r}(r \psi )  \right\|_{L_r^2(D)} + \|\partial_z \psi \|_{L_r^2(D)} +
\|\psi \|_{L_r^2(D)} \\
\leq & C \|\mathcal{L} \psi \|_{L_r^2(D)} + C \left\| \frac1r \frac{\partial^2}{\partial r \partial z} (r \psi ) \right\|_{L_r^2(D)}  \leq C \|\Bv^*\|_{H^1(\Omega)}.
\ea
\ee
The straightforward computations yield
\be \nonumber
\left\{
\ba
& \Delta (\psi \Be_r ) = (\mathcal{L} + \partial_z^2) \psi \Be_r , \ \ \ \mbox{in}\ \Omega, \\
& \psi = 0,\ \ \ \ \mbox{on}\ \partial \Omega.
\ea
\right.
\ee
It follows from the regularity theory for elliptic equations \cite{GT} and \eqref{5-108} that one has
\be \label{5-111}
\|\psi \Be_r\|_{H^2(\Omega)} \leq C \| (\mathcal{L} + \partial_z^2) \psi \Be_r \|_{L^2(\Omega)} \leq C \|\Bv^*\|_{H^1(\Omega)}.
\ee
The proof of Lemma \ref{newlemma} is completed.
\end{proof}

\begin{pro} \label{mainresult3} Assume that $\BF \in L^2(\Omega)$, $\BF= \BF(r, z)$ is axisymmetric, and $\Bu \in H^2(\Omega)$ is an axisymmetric solution to the problem \eqref{NS}--\eqref{fluxBC}.
   There exist a constant $\epsilon_0$, independent of $\BF$ and $\Phi$, and a constant $\alpha_0(\leq 1)$ depending on $\Phi$,  such that if
\be \label{condition5}
\|e^{\alpha |z|} \BF\|_{L^2(\Omega)} < + \infty\ \ \ \
\ee
{with some} $\alpha \in (0, \alpha_0)$ and
\be \label{condition5-1}
\|\Bu - \bBU\|_{H^{\frac53}(\Omega)} \leq \epsilon_0,
\ee
then the solution $\Bu$ satisfies
\be \label{asy0}
\|e^{\alpha |z| } ( \Bu - \bBU) \|_{H^{\frac53}(\Omega)} \leq C \|e^{\alpha |z| }\BF\|_{L^2(\Omega)}.
\ee
Here $C$ is a uniform constant independent of $\BF$ and $\Phi$.
\end{pro}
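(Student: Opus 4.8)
The plan is to treat the nonlinear terms as a perturbation of the linearized problems of Section~\ref{Linear} and to derive weighted analogues of Propositions~\ref{existence-stream}, \ref{uniformestimate} and \ref{existence-swirl}. I would write $\Bv=\Bu-\bBU=v^r\Be_r+v^\theta\Be_\theta+v^z\Be_z$, let $\psi$ be the stream function of $\Bv^*=v^r\Be_r+v^z\Be_z$, and recall that $\psi$ solves \eqref{ap-40}--\eqref{ap-41} and $\Bv^\theta$ solves \eqref{ap-42}--\eqref{ap-43} with $\BF$ on the right. It is convenient to replace $e^{\alpha|z|}$ by a smooth surrogate weight $\phi=\phi(z)$ with $\phi(z)=e^{\alpha|z|}$ for $|z|\ge1$, $\phi\ge1$, and $|\phi^{(k)}|\le C_k\alpha\,\phi$ for every $k$; all the formal computations below would first be carried out with suitable smooth truncations $\phi_N$ (equal to $\phi$ on $\{|z|\le N\}$ and bounded), for which $\phi_N\Bv\in H^2(\Omega)$ a priori since $\Bu\in H^2(\Omega)$, with all estimates uniform in $N$, after which one lets $N\to\infty$.

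The heart of the argument is a weighted linear estimate. Since $\mathcal L=\partial_r\big(\tfrac1r\partial_r(r\,\cdot\,)\big)$ acts only in $r$, it commutes with $\phi$, and therefore $\phi\psi$ satisfies
\[
\bar U(r)\,\partial_z(\mathcal L+\partial_z^2)(\phi\psi)-(\mathcal L+\partial_z^2)^2(\phi\psi)=\phi\,g+\mathcal R,
\]
where $g$ is the right-hand side of \eqref{ap-40} and $\mathcal R$ is a sum of commutator terms, each equal to some $\phi^{(j)}$ ($j\ge1$) times a derivative of $\psi$, those originating from the convection term carrying the additional factor $\bar U(r)=\tfrac{2\Phi}{\pi}(1-r^2)$. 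Using $\mathcal L=\partial_r\circ(\tfrac1r\partial_r(r\,\cdot\,))$ together with at most one integration by parts in $z$, I would rewrite every term of $\mathcal R$ either as an $L_r^2(D)$ function or in the divergence form $\partial_zF^r-\partial_rF^z$ with $F^r,F^z\in L^2(\Omega)$; in both cases the corresponding norm is $\le C\alpha(1+\Phi)\|\phi\Bv^*\|_{H^{\frac53}(\Omega)}$, because each $\phi^{(j)}$ contributes a factor $O(\alpha)$, the factor of $\psi$ surviving the reduction is a first derivative of a component of $\Bv^*$ (controlled in $L^2$ by $\|\phi\Bv^*\|_{H^1(\Omega)}\le\|\phi\Bv^*\|_{H^{\frac53}(\Omega)}$), and Lemma~\ref{newlemma} handles the zeroth-order pieces. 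Applying Proposition~\ref{uniformestimate} to the $L_r^2$ part and Proposition~\ref{existence-stream} to the divergence part, and arguing in the same way for $\Bv^\theta$ (using $\Delta\Bv^\theta=(\mathcal L+\partial_z^2)v^\theta\,\Be_\theta$ and Proposition~\ref{existence-swirl}, whose $H^2$-bound also has a $\Phi$-independent constant), I would obtain
\[
\|\phi\Bv\|_{H^{\frac53}(\Omega)}\le C\|\phi\BF\|_{L^2(\Omega)}+C\,\mathcal N+C\alpha(1+\Phi)\,\|\phi\Bv\|_{H^{\frac53}(\Omega)},
\]
with $\mathcal N$ the norms of the weighted quadratic terms of \eqref{ap-40} and \eqref{ap-42} in the spaces dictated by Propositions~\ref{existence-stream} and \ref{existence-swirl}, and $C$ the $\Phi$-independent constant of those propositions. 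Choosing $\alpha_0=\alpha_0(\Phi)\le1$ so small that $C\alpha_0(1+\Phi)\le\tfrac14$ — which forces $\alpha_0\sim c/\Phi$ for large $\Phi$ — absorbs the last term.

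It remains to estimate $\mathcal N$. By \eqref{ap-40} the quadratic part of $g$, namely $-\partial_r(v^r\omega^\theta)-\partial_z(v^z\omega^\theta)+\partial_z[(v^\theta)^2/r]$, is already in the divergence form handled by Proposition~\ref{existence-stream}, and the quadratic part of the swirl forcing is $-(v^r\partial_r+v^z\partial_z)\Bv^\theta-(v^r/r)\Bv^\theta$. Using $H^{\frac53}(\Omega)\hookrightarrow L^\infty(\Omega)\cap H^1(\Omega)$, Hardy's inequality $\|\Bv^\theta/r\|_{L^2(\Omega)}\le C\|\Bv^\theta\|_{H^1(\Omega)}$, and the fact that $\phi$ commutes with products up to the factor $\phi'=O(\alpha\phi)$, each of these weighted quadratic terms — hence $\mathcal N$ — is $\le C\|\Bv\|_{H^{\frac53}(\Omega)}\|\phi\Bv\|_{H^{\frac53}(\Omega)}$ with $C$ a pure Sobolev constant; the crucial point is that only the $H^{\frac53}$-norm of $\Bv$ enters here, which is exactly why the smallness \eqref{condition5-1} is imposed in $H^{\frac53}$. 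Combining with the weighted linear estimate gives
\[
\|\phi\Bv\|_{H^{\frac53}(\Omega)}\le C\|\phi\BF\|_{L^2(\Omega)}+C\|\Bv\|_{H^{\frac53}(\Omega)}\|\phi\Bv\|_{H^{\frac53}(\Omega)}\le C\|\phi\BF\|_{L^2(\Omega)}+C\epsilon_0\|\phi\Bv\|_{H^{\frac53}(\Omega)},
\]
with $C$ independent of $\BF$ and $\Phi$; fixing $\epsilon_0$ with $C\epsilon_0\le\tfrac12$ absorbs the last term, and $N\to\infty$ yields \eqref{asy0}.

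The main obstacle is the weighted linear estimate: although every weight commutator carries the small factor $O(\alpha)$, the ones produced by the convection term $\bar U(r)\partial_z$ come with $\bar U\sim\Phi$, so they can be absorbed only when $\alpha\lesssim c/\Phi$ — this is precisely why $\alpha_0$ has to be allowed to depend on $\Phi$. What needs to be checked carefully is that this is the only place where $\Phi$ intervenes, so that the final constant $C$ in \eqref{asy0} and the threshold $\epsilon_0$ — which come only from Propositions~\ref{existence-stream}, \ref{uniformestimate}, \ref{existence-swirl} and from Sobolev embeddings — stay independent of $\Phi$. A secondary, routine point is the a priori justification that $\phi\Bv\in H^{\frac53}(\Omega)$, handled by the truncation $\phi_N$ and a limiting argument. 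An equivalent route to the weighted linear estimate would be to analytically continue the a priori bounds of Propositions~\ref{propcase0} and \ref{uniform-swirl} to complex Fourier variables $\xi$ with $|\Im\xi|\le\alpha_0$ — the perturbation from $\Im\xi\ne0$ being again of relative size $O(\alpha_0\Phi)$ through the coefficient $i\xi\bar U(r)$ — and then to invoke a Paley--Wiener contour shift.
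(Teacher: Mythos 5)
Your proposal is correct and follows essentially the same route as the paper: multiply the stream-function and swirl equations by the exponential weight, treat the weight commutators (those from the convection term carrying the factor $\bar U\sim\Phi$, which is exactly why $\alpha_0$ must scale like $1/\Phi$) and the quadratic terms as perturbations, apply Propositions \ref{existence-stream}, \ref{uniformestimate}, \ref{existence-swirl} together with Lemma \ref{newlemma} to pass between the weighted stream function and the weighted velocity, and absorb using $\alpha\le\alpha_0(\Phi)$ and $C\epsilon_0\le\tfrac12$. The only cosmetic differences are that the paper uses the one-sided weights $e^{\pm\alpha z}$ on $\{\pm z\ge0\}$ rather than a smoothed $e^{\alpha|z|}$, and your truncation $\phi_N$ makes explicit an a priori finiteness point the paper leaves implicit.
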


\begin{proof}
Let $\Bv = \Bu - \bBU$ and $\psi$ be the stream function associated with the vector field $\Bv$. Then $(\psi, \Bv^\theta)$ satisfies the problem \eqref{ap-40}--\eqref{ap-43}. Multiplying \eqref{ap-40} and \eqref{ap-42} by $e^{\alpha z}$ gives
\be \label{asy1} \ba
& \bar{U}(r)   \partial_z (\mathcal{L} + \partial_z^2) (e^{\alpha z} \psi) - (\mathcal{L} + \partial_z^2)^2 (e^{\alpha z} \psi) \\
= & \partial_z(e^{\alpha z} F^r) - \partial_r (e^{\alpha z} F^z )  - \partial_r \left[ v^r (\mathcal{L} + \partial_z^2) (e^{\alpha z} \psi ) \right]- \partial_z \left[ v^z (\mathcal{L} + \partial_z^2) (e^{\alpha z}\psi )  \right] \\
& + \partial_z \left[ \frac{v^\theta}{r} (e^{\alpha z} v^\theta)  \right] + \bar{U}(r) \left[\alpha \mathcal{L} ( e^{\alpha z} \psi ) + \alpha^3 e^{\alpha z} \psi - 3\alpha^2 \partial_z (e^{\alpha z} \psi) + 3\alpha \partial_z^2 (e^{\alpha z} \psi )\right]\\
& - \partial_z \left[  4\alpha \partial_z^2 (e^{\alpha z}\psi) - 6 \alpha^2 \partial_z (e^{\alpha z}\psi) + 4\alpha^3  (e^{\alpha z } \psi) \right] +  \alpha^4 e^{\alpha z}\psi \\
&- \left[ 4\alpha \mathcal{L} \partial_z (e^{\alpha z} \psi)- 2 \alpha^2 \mathcal{L} (e^{\alpha z}\psi) \right] - \alpha e^{\alpha z} F^r \\
 & + \partial_r \left\{ v^r \left[  2 \alpha \partial_z (e^{\alpha z} \psi ) - \alpha^2 e^{\alpha z}\psi \right] \right\}
 + \partial_z \left\{ v^z \left[  2 \alpha \partial_z (e^{\alpha z} \psi ) - \alpha^2 e^{\alpha z} \psi         \right]  \right\}\\
 & +  \alpha v^z (\mathcal{L} + \partial_z^2) (e^{\alpha z} \psi ) - \alpha v^z \left[ 2\alpha \partial_z (e^{\alpha z} \psi ) - \alpha^2 e^{\alpha z} \psi   \right] - \alpha \frac{v^\theta}{r} (e^{\alpha z} v^\theta) ,
\ea \ee
and
\be \label{asy2}
\ba
& \bar{U} (r)  \partial_z(e^{\alpha z} \Bv^\theta) - \Delta ( e^{\alpha z} \Bv^\theta)
 =e^{\alpha z} \BF^\theta - (\Bv^* \cdot \nabla) (e^{\alpha z} \Bv^\theta) - \frac{v^r}{r}  (e^{\alpha z} \Bv^\theta) \\[2mm]
&\quad \quad \ \ \ \ + \bar{ U}(r)   \alpha e^{\alpha z} \Bv^\theta - 2 \alpha \partial_z(e^{\alpha z} \Bv^\theta) +\alpha^2 e^{\alpha z} \Bv^\theta
+ v^z \alpha e^{\alpha z} \Bv^\theta.
\ea
\ee

Denote
\be \nonumber
v_\alpha^r = \partial_z (e^{\alpha z} \psi),\ \ \ v_\alpha^z = -\frac{\partial_r (r e^{\alpha z} \psi )}{r},\ \ \ \Bv_\alpha^* = v_\alpha^r \Be_r + v_\alpha^z \Be_z,\ \ \Bv_\alpha^\theta = e^{\alpha z} v^\theta \Be_\theta.
\ee
Regarding the terms on the right hand of \eqref{asy1}, by Sobolev embedding inequalities and Lemma \ref{newlemma}, one has
\be \label{asy5} \ba
& \| v^r ( \mathcal{L} + \partial_z^2) (e^{\alpha z} \psi ) \|_{L_r^2(D)}
\leq C \|v^r \|_{L^\infty(\Omega)} \|(\mathcal{L} + \partial_z^2) (e^{\alpha z} \psi ) \|_{L_r^2(D)} \\
\leq & C \|\Bv\|_{H^{\frac53}(\Omega)} \|\Bv_\alpha^* \|_{H^1(\Omega)}
\leq C \|\Bv\|_{H^{\frac53} (\Omega) }  \|\Bv_\alpha^* \|_{H^{\frac53}(\Omega)}
\ea \ee
and
\be \label{asy6} \ba
& \| v^z  ( \mathcal{L} + \partial_z^2) (e^{\alpha z} \psi ) \|_{L_r^2(D)}
\leq C \|v^z \|_{L^\infty(\Omega)} \|(\mathcal{L} + \partial_z^2) (e^{\alpha z} \psi ) \|_{L_r^2(D)} \\
\leq & C \|\Bv\|_{H^{\frac53}(\Omega)} \|\Bv_\alpha^* \|_{H^1(\Omega)}
\leq C \|\Bv\|_{H^{\frac53} (\Omega) }  \|\Bv_\alpha^* \|_{H^{\frac53}(\Omega)} .
\ea
\ee
Similarly, one has
 \be \label{asy6-1} \ba
& \left\| \frac{v^\theta}{r} e^{\alpha z} v^\theta  \right\|_{L_r^2(D)}
\leq \left\| \frac{v^\theta }{r} \right\|_{L^2(\Omega)} \|e^{\alpha z} v^\theta \|_{L^{\infty} (\Omega)}
\leq C \|\Bv \|_{H^{\frac53} (\Omega)} \|\Bv_\alpha^\theta \|_{H^{\frac53}(\Omega)} .
\ea \ee
Note that
\be \label{asy7} \ba
& \| \bar{U}(r) \left[\alpha \mathcal{L} ( e^{\alpha z} \psi ) + \alpha^3 e^{\alpha z} \psi - 3\alpha^2 \partial_z (e^{\alpha z} \psi) + 3\alpha \partial_z^2 (e^{\alpha z} \psi )\right]  \|_{L_r^2(D)} \\
\leq & C \Phi |\alpha| \| \Bv_\alpha^* \|_{H^1(\Omega)} \leq C \Phi |\alpha| \| \Bv_\alpha^* \|_{H^{\frac53}(\Omega)}.
\ea \ee
and
\be \label{asy8} \ba
&  \|  4\alpha \partial_z^2 (e^{\alpha z}\psi) - 6 \alpha^2 \partial_z (e^{\alpha z}\psi) + 4\alpha^3 (e^{\alpha z } \psi) \|_{L_r^2(D)} + \| \alpha^4 e^{\alpha z}\psi  \|_{L_r^2(D) } \\
&\quad + \| 4 \alpha \mathcal{L} (e^{\alpha z} \psi ) \|_{L_r^2(D)} + \|2\alpha^2 \mathcal{L} (e^{\alpha z} \psi ) \|_{L_r^2(D)}\\
 \leq &  C |\alpha | \| \Bv_\alpha^* \|_{H^1(\Omega)} + C |\alpha| \| \Bv_\alpha^* \|_{H^{\frac53}(\Omega)} \leq C |\alpha| \| \Bv_\alpha^* \|_{H^{\frac53}(\Omega)}.
\ea
\ee
Furthermore, it holds that
\be \label{asy10} \ba
&\| v^r \left[   2\alpha \partial_z (e^{\alpha z} \psi ) - \alpha^2 e^{\alpha z} \psi   \right]  \|_{L_r^2(D)} + \| v^z \left[   2\alpha \partial_z (e^{\alpha z} \psi ) - \alpha^2 e^{\alpha z} \psi   \right]  \|_{L_r^2(D)}\\
&\quad +\| \alpha v^z (\mathcal{L} + \partial_z^2) (e^{\alpha z} \psi ) - \alpha v^z \left[ 2\alpha \partial_z (e^{\alpha z} \psi ) - \alpha^2 e^{\alpha z} \psi  \right] \|_{L_r^2(D)}\\
\leq & C |\alpha | \|v^r \|_{L^\infty(\Omega)} \| \Bv_\alpha^* \|_{H^{\frac53}(\Omega)} +  C |\alpha | \|\Bv\|_{H^{\frac53} (\Omega)} \| \Bv_\alpha^* \|_{H^{\frac53}(\Omega)}\\
\leq & C |\alpha | \|\Bv\|_{H^{\frac53} (\Omega)} \| \Bv_\alpha^* \|_{H^{\frac53}(\Omega)}.
\ea
\ee
Similar to \eqref{asy6-1}, one has
\be \label{asy10-3}
\left\| \alpha \frac{v^\theta}{r} (e^{\alpha z} v^\theta) \right\|_{L_r^2(D)}
\leq C |\alpha | \|\Bv\|_{H^{\frac53} (\Omega)} \| \Bv_\alpha^\theta \|_{H^{\frac53}(\Omega)}.
\ee

It follows from the  Sobolev embedding inequalities that one has
\be \label{asy15}
\| ( \Bv^* \cdot \nabla ) (e^{\alpha z} \Bv^\theta  ) \|_{L^2(\Omega)}
\leq C \|\Bv^* \|_{L^\infty(\Omega)}  \|e^{\alpha z} \Bv^\theta  \|_{H^1(\Omega)}
\leq C \|\Bv\|_{H^{\frac53} (\Omega)} \| \Bv_\alpha^\theta \|_{H^{\frac53}(\Omega)}
\ee
and
\be \label{asy16}  \left\| \frac{v^r}{r} (e^{\alpha z} \Bv^\theta )   \right\|_{L^2(\Omega)}
\leq C \|\partial_r v^r + \partial_z v^z \|_{L^3(\Omega)} \|e^{\alpha z} \Bv^\theta \|_{L^6(\Omega)}
\leq  C  \|\Bv\|_{H^{\frac53} (\Omega)} \| \Bv_\alpha^\theta \|_{H^{\frac53}(\Omega)}.
\ee
Furthermore, it holds that
\be \label{asy17}
\| -2 \alpha \partial_z (e^{\alpha z} \Bv^\theta ) + \alpha^2 e^{\alpha z} \Bv^\theta \|_{L^2(\Omega)}
\leq C |\alpha | \| \Bv_\alpha^\theta\|_{H^{\frac53}(\Omega)}
\ee
and
\be \label{asy18} \ba
& \|\bar{U}(r) \alpha e^{\alpha z} \Bv^\theta \|_{L^2(\Omega)} + \|v^z \alpha e^{\alpha z} \Bv^\theta \|_{L^2(\Omega)} \\
\leq & C |\alpha| \Phi \|e^{\alpha z} \Bv^\theta \|_{H^{\frac53}(\Omega)} + C |\alpha| \|v^z\|_{L^\infty(\Omega)} \|e^{\alpha z} \Bv^\theta \|_{H^{\frac53}(\Omega)}\\
\leq & C |\alpha| \left( \Phi + \|\Bv\|_{H^{\frac53} (\Omega)} \right)  \|  \Bv^\theta_\alpha \|_{H^{\frac53}(\Omega)}.
\ea \ee

Hence, collecting the estimates \eqref{asy5}--\eqref{asy18} and applying Propositions \ref{existence-stream}-\ref{existence-swirl}, Proposition \ref{uniformestimate} yield
\be \label{asy19} \ba
& \| \Bv_\alpha^* \|_{H^{\frac53} (\Omega) } + \| \Bv^\theta_\alpha \|_{H^{\frac53}(\Omega)} \\
\leq & C \|e^{\alpha z} \BF\|_{L^2(\Omega)} + C_7 \|\Bv \|_{H^{\frac53}(\Omega)} \left( \|\Bv_\alpha^*\|_{H^{\frac53}(\Omega)} +
\|\Bv^\theta_\alpha\|_{H^{\frac53}(\Omega)}  \right) \\
&\ \ \ + C_8 |\alpha| \left(\Phi + 1 + \|\Bv\|_{H^{\frac53}(\Omega)}\right) \left( \|\Bv_\alpha^*\|_{H^{\frac53}(\Omega)} +
\|\Bv^\theta_\alpha\|_{H^{\frac53}(\Omega)}  \right) .
\ea \ee
Choose a small constant $\epsilon_0$ such that
$
C_7 \epsilon_0 \leq \frac14.
$
If $\Bu$ and $\alpha $ satisfy
\be \nonumber
\|\Bv\|_{H^{\frac53}(\Omega) } = \|\Bu - \bBU \|_{H^{\frac53}(\Omega)} \leq \epsilon_0\ \ \ \ \mbox{and}\ \ \ |\alpha| \leq \alpha_0 \leq \frac{1}{2C_8 (1+ \Phi+ \epsilon_0) },
\ee
then the inequality \eqref{asy19} implies that
\be \label{asy20}
\| \Bv_\alpha^* \|_{H^{\frac53} (\Omega)} + \|\Bv_\alpha^\theta\|_{H^{\frac53}(\Omega)}
\leq C  \|e^{\alpha z}\BF\|_{L^2(\Omega)}.
\ee

 Note that
\be \nonumber
e^{\alpha z} v^r =  \partial_z(e^{\alpha z} \psi) - \alpha e^{\alpha z}\psi = v_\alpha^r - \alpha e^{\alpha z}\psi \ \ \ \mbox{and}\ \ \ e^{\alpha z} v^z = -  \frac{\partial_r ( r e^{\alpha z} \psi)}{\partial r}= v_\alpha^z.
\ee
Decompose $e^{\alpha z} \Bv^*$ into two parts as follows
\be \nonumber
e^{\alpha z}\Bv^* = \Bv_\alpha^* - \alpha e^{\alpha z}\psi \Be_r.
\ee
It follows from Lemma \ref{newlemma} that one has
\be \label{asy21}
\|e^{\alpha z} \psi \Be_r\|_{H^{\frac53}(\Omega)}
\leq \|e^{\alpha z} \psi \Be_r\|_{H^2(\Omega)} \leq C \|\Bv_\alpha^* \|_{H^1(\Omega)} \leq C \|\Bv_\alpha^*\|_{H^{\frac53}(\Omega)}.
\ee
This, together with \eqref{asy20}, implies
\be
\|e^{\alpha z } ( \Bu - \bBU) \|_{H^{\frac53}(\Omega\cap \{z\geq 0\})} \leq C \|e^{\alpha |z| }\BF\|_{L^2(\Omega)}.
\ee
Similarly, one has
\be
\|e^{-\alpha z } ( \Bu - \bBU) \|_{H^{\frac53}(\Omega\cap \{z\leq 0\})} \leq C \|e^{\alpha |z| }\BF\|_{L^2(\Omega)}.
\ee
Hence the proof the proof of Proposition \ref{mainresult3} is completed.
\end{proof}

Next, we remove the smallness requirement for $\Bu - \bBU $ in Proposition \ref{mainresult3}. The key observation is that $\|\Bu-\bBU\|_{H^2(\Omega_L)}$ with $\Omega_L=B_1(0)\times (L, \infty)$ is sufficiently small for sufficiently large $L$, provided that $\|\Bu-\bBU\|_{H^2(\Omega)}$ is bounded. This implies that $\Bu$ satisfies the assumptions of Proposition \ref{mainresult3} in the domain $\Omega_L$, and hence Theorem \ref{mainresult2} can be proved in the similar way.

\begin{proof}[Proof of Theorem \ref{mainresult2}]  Let $\Bv = \Bu - \bBU$ and $\psi$ be the corresponding stream function.
Let $L$  be a positive constant to be determined. Denote
  $\Omega_{L} = B_1(0) \times (L, +\infty)$.
Choose a smooth cut-off function $\eta(z)$ satisfying
\be \nonumber
\eta(z)= \left\{ \begin{array}{l} 0,\ \ \ z\leq L,\\
1, \ \ \ z\geq L + 1.   \end{array}  \right.
\ee
Note that $(\psi, \Bv^\theta)$ is a solution to the problem \eqref{ap-40}--\eqref{ap-43}.
Multiplying \eqref{ap-40} by $\eta(z)$ yields
\be \label{asy-87} \ba
& \bar{U}(r) \partial_z (\mathcal{L} + \partial_z^2)(\eta \psi ) - (\mathcal{L} + \partial_z^2)^2 (\eta \psi) \\
=   & \partial_z (\eta F^r + \tilde{F}^r) - \partial_r (\eta F^z + \tilde{F}^z ) + \tilde{f}
- \partial_r \left[ v^r (\mathcal{L} + \partial_z^2) (\eta \psi) \right] \\
&\ \ \ \ \ \ - \partial_z \left[ v^r( \mathcal{L} + \partial_z^2)(\eta \psi)        \right] + \partial_z \left[ \frac{v^\theta}{r} \eta v^\theta     \right],
\ea \ee
where
\be \nonumber  \ba
\tilde{F}^r = &   \bar{U}(r) \left[  2 \eta^{\prime}(z) \partial_z \psi + \eta^{\prime \prime}(z) \psi    \right]  - 4 \eta^{\prime}(z) \mathcal{L} \psi - 4 \eta^{\prime}(z) \partial_z^2 \psi + 2 v^z \eta^{\prime}(z) \partial_z \psi + v^z \eta^{\prime \prime}(z) \psi  ,
\ea  \ee
\be \nonumber
\tilde{F}^z = - \left[ 2v^r \eta^{\prime}(z) \partial_z \psi + v^r \eta^{\prime \prime} (z) \psi \right] ,
\ee
and
\be \nonumber \ba
\tilde{f} = &  - \eta^{\prime} (z) F^r + \bar{U}(r) \eta^{\prime}(z) ( \mathcal{L} + \partial_z^2) \psi + 2\eta^{\prime \prime}(z) \mathcal{L} \psi  - \left[ \eta^{(4)}(z) \psi + 4\eta^{(3)}(z) \partial_z \psi + 2 \eta^{\prime \prime}(z) \partial_z^2 \psi            \right] \\
 &\ \ \ \  + v^z \eta^{\prime}(z) (\mathcal{L} + \partial_z^2)\psi - \frac{v^\theta}{r} \eta^{\prime}(z) v^\theta .
\ea \ee
Similarly, one has
\be \label{asy-88}
\bar{U}(r) \partial_z(\eta \Bv^\theta)  - \Delta (\eta \Bv^\theta) = \eta \BF^\theta + \tilde{\BF}^\theta - (v^r \partial_r + v^z \partial_z )(\eta \Bv^\theta) - \frac{v^r}{r} (\eta \Bv^\theta ),
\ee
where
\be \nonumber
\tilde{\BF}^\theta = \bar{U}(r) \eta^{\prime}(z)  \Bv^\theta - \left[ 2 \eta^{\prime}(z) \partial_z \Bv^\theta + \eta^{\prime \prime} (z) \Bv^\theta  \right] + v^z \eta^{\prime}(z) \Bv^\theta.
\ee

Denote
\be  \nonumber
v_{\alpha, \eta}^r = \partial_z (e^{\alpha z} \eta \psi ) , \ \ \ v_{\alpha, \eta}^z = - \frac{\partial_r (r e^{\alpha z} \eta \psi )}{r}, \,\,  \text{and}\,\,
\Bv_{\alpha, \eta}^{*}= v_{\alpha, \eta}\Be_r + v_{\alpha, \eta}\Be_z.
\ee
It follows from the same lines as in the proof of Proposition \ref{mainresult3} that one has
\be  \la{asy-89}
\ba
& \| \Bv^*_{\alpha, \eta}  \|_{H^{2}(\Omega)} + \|e^{\alpha z} (\eta \Bv^\theta) \|_{H^{2}(\Omega)} \\
\leq & C (1+\Phi^{\frac14}) ( \| e^{\alpha z} \eta \BF\|_{L^2(\Omega)} + \|e^{\alpha z} \tilde{F}^r \|_{L^2_r(D)} + \| e^{\alpha z} \tilde{F}^z \|_{L_r^2(D)} + \| e^{\alpha z} \tilde{f}\|_{L_r^2(D)}) \\
&\ \ \ \ +
C_9 (1 + \Phi^{\frac14}) \| \Bv \|_{H^{\frac53}(\Omega_L)} \left(  \| \Bv^*_{\alpha, \eta}  \|_{H^{2}(\Omega)} + \|e^{\alpha z} (\eta \Bv^\theta) \|_{H^{2}(\Omega)}            \right) \\
&\ \ \ \ \ \ + C_{10} (1 + \Phi^{\frac14} ) |\alpha| \left( \Phi +  1  +   \| \Bv \|_{H^{\frac53}(\Omega_L)} \right) \left(  \| \Bv^*_{\alpha, \eta}  \|_{H^{2}(\Omega)} + \|e^{\alpha z} (\eta \Bv^\theta) \|_{H^{2}(\Omega)}            \right).
\ea
\ee

Since
 $\|\Bv  \|_{H^{\frac53}(\Omega)} < + \infty$,
there exists an $L$ large enough such that
\be \nonumber
  \|\Bv \|_{H^{\frac53}(\Omega_{L})} \leq \min \left\{ \frac{1}{4C_9(1 + \Phi^{\frac14} ) }, \ 1       \right \}.
\ee
Choose an $\alpha_0>0$ small enough such that
\be \nonumber
C_{10} (1 + \Phi^{\frac14} ) ( \Phi+ 2) \alpha_0 \leq  \frac14.
\ee
For every $\alpha \in ( 0, \alpha_0)$, it holds that
\be \nonumber
\ba
& \|\Bv_{\alpha, \eta}^* \|_{H^{2}(\Omega)} + \|e^{\alpha z} (\eta \Bv^\theta) \|_{H^{2}(\Omega)} \\
\leq & C (1 + \Phi^{\frac14} ) \left( \|e^{\alpha z} \tilde{F}^r\|_{L^2_r(D)}
+ \|e^{\alpha z} \tilde{F}^z \|_{L_r^2(D)} + \|e^{\alpha z} \tilde{f} \|_{L^2_r(D)}    + \| e^{\alpha z} \tilde{\BF}^\theta \|_{L^2(\Omega)} \right)  \\
& \ \ \ \ +  C (1 + \Phi^{\frac14} ) \|e^{\alpha z} \eta \BF \|_{L^2(\Omega)}.
\ea
\ee
Note that $ \supp \tilde{F}^r$, $\supp \tilde{F}^z$, $\supp \tilde{f}$, $\supp \tilde{\BF}^\theta  \subseteq  \overline{B_1^2(0)} \times [L, L+ 1]$,  then
\be
\| e^{\alpha z} \tilde{F}^r \|_{L_r^2(D)} + \|e^{\alpha z} \tilde{F}^z \|_{L_r^2(D)} + \|e^{\alpha z}  \tilde{f} \|_{L_r^2(D)} +  \|e^{\alpha z} \tilde{\BF}^\theta \|_{L^2(\Omega)}  \leq C (1 + \Phi) \| \Bv \|_{H^1(\Omega)}.
\ee
Hence, one has
\be \label{asy-93}
\|\Bv_{\alpha, \eta}^* \|_{H^{2}(\Omega)} + \|e^{\alpha z} (\eta \Bv^\theta) \|_{H^{2}(\Omega)} < + \infty.
\ee

Similar to the proof of Proposition \ref{mainresult3}, one can rewrite  $e^{\alpha z} \eta \Bv^*$ as follows,
\be \nonumber
e^{\alpha z} \eta \Bv^* = \Bv_{\alpha, \eta }^*  - \left( \alpha e^{\alpha z} \eta \psi + e^{\alpha z} \eta^{\prime} (z) \psi    \right)\Be_r.
\ee
Thus it follows from Lemma \ref{newlemma} that one has
\be \label{asy-95}
\|( \alpha e^{\alpha z} \eta \psi ) \Be_r   \|_{H^2(\Omega  )} + \| e^{\alpha z} \eta^{\prime} (z) \psi \Be_r \|_{H^2(\Omega)}
\leq C \| \Bv_{\alpha, \eta}^* \|_{H^1(\Omega)} + C \|\Bv\|_{H^1(\Omega)} < + \infty.
\ee
This, together with \eqref{asy-93}, completes   the proof of Theorem \ref{mainresult2}.
\end{proof}



\appendix

\section{Some elementary lemmas}\label{Appendix}
In this appendix, we collect some basic lemmas which play important roles in the paper. Their proofs can be found in \cite[Appendix A]{WXX1}, so we omit the
details here.

The following lemma is about Poincar\'e type inequalities.
\begin{lemma}\label{lemma1}
For a  function $g\in C^2([0,1])$
it holds that
\be \label{2-1-11}
\int_0^1 |g |^2  r \, dr \leq  \int_0^1 \left|   \frac{d}{dr} (r g )     \right|^2 \frac1r \, dr.
\ee
If, in addition, $g(0)=g(1)=0$, then one has
\be
  \int_0^1 \left|   \frac{d}{dr} (r g )     \right|^2 \frac1r \, dr
  \leq \left( \int_0^1 | \mathcal{L}g|^2  r \, dr  \right)^{\frac12} \left( \int_0^1 |g|^2 r\, dr  \right)^{\frac12} \leq \int_0^1 | \mathcal{L}g|^2  r \, dr.
\ee
\end{lemma}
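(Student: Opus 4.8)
The plan is to prove the two inequalities in turn, deriving the second from an integration-by-parts identity together with the first. Throughout set $h := \frac{d}{dr}(rg) = g + rg'$, so that $\mathcal{L}g = \frac{d}{dr}\!\left(\frac1r h\right)$, and abbreviate $A := \int_0^1 |h|^2\,\frac1r\,dr$, $B := \int_0^1 |g|^2 r\,dr$, $C := \int_0^1 |\mathcal{L}g|^2 r\,dr$; the three claims are $B\le A$, then $A\le\sqrt{CB}$, then $\sqrt{CB}\le C$.

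For $B\le A$, note that if $g(0)\neq 0$ then $h(0)=g(0)\neq 0$, so $|h|^2/r\sim g(0)^2/r$ near the axis and $A=+\infty$, leaving nothing to prove; hence one may assume $g(0)=0$, which together with $g\in C^2$ forces $g(r)=O(r)$ and makes all the integrals below convergent. Expanding $|h|^2 = g^2 + 2rgg' + r^2(g')^2$ and dividing by $r$ gives $A = \int_0^1 \frac{g^2}{r}\,dr + \int_0^1 (g^2)'\,dr + \int_0^1 r(g')^2\,dr$. The middle term equals $g(1)^2 - g(0)^2 = g(1)^2 \ge 0$ and the last is nonnegative, so $A\ge \int_0^1 \frac{g^2}{r}\,dr \ge \int_0^1 g^2 r\,dr = B$, using $\frac1r\ge r$ on $(0,1]$.

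Now assume in addition $g(1)=0$. The heart of the matter is the identity $A = -\int_0^1 (\mathcal{L}g)\,g\,r\,dr$, obtained by writing $\int_0^1 (\mathcal{L}g)\,g\,r\,dr = \int_0^1 \frac{d}{dr}\!\left(\frac1r h\right)(rg)\,dr$ and integrating by parts: the boundary term is $\big[\,hg\,\big]_0^1$, which vanishes since $g(1)=0$ at $r=1$ and $h(0)=g(0)=0$ at $r=0$, while the remaining term is $-\int_0^1 \frac1r h\cdot h\,dr = -A$. Consequently
\be \nonumber
A = \int_0^1 (-\mathcal{L}g)\,g\,r\,dr \le \left( \int_0^1 |\mathcal{L}g|^2 r\,dr \right)^{\frac12}\left( \int_0^1 |g|^2 r\,dr \right)^{\frac12} = \sqrt{CB}
\ee
by the Cauchy--Schwarz inequality with weight $r\,dr$, which is the middle inequality. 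Finally, for $\sqrt{CB}\le C$: from $B\le A$ and $A\le\sqrt{CB}$ one gets $A\le\sqrt{CA}$, hence $A\le C$, hence $B\le A\le C$, and therefore $\sqrt{CB}\le\sqrt{C\cdot C}=C$.

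The only point requiring care is the legitimacy of the integration by parts and the vanishing of the boundary contributions at the singular endpoint $r=0$; both are guaranteed by $g\in C^2([0,1])$ together with the vanishing $g(0)=0$ (resp. $g(1)=0$ for the second inequality), which make $h(r)=O(r)$ and every integrand integrable up to the axis. No compactness or functional-analytic input is needed — the argument is entirely elementary, which is why it belongs in an appendix.
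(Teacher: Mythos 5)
Your proof is correct: the pointwise expansion $\bigl|\tfrac{d}{dr}(rg)\bigr|^2\tfrac1r=\tfrac{g^2}{r}+(g^2)'+r(g')^2$ for the Poincar\'e-type bound (with the observation that the right-hand side is infinite unless $g(0)=0$), the integration-by-parts identity $\int_0^1\bigl|\tfrac{d}{dr}(rg)\bigr|^2\tfrac1r\,dr=-\int_0^1(\mathcal{L}g)\,g\,r\,dr$ followed by Cauchy--Schwarz, and the bootstrap $A\le\sqrt{CA}\Rightarrow A\le C\Rightarrow\sqrt{CB}\le C$ are all valid, and the boundary contributions at the singular endpoint $r=0$ are justified by $h(r)=O(r)$. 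The paper itself omits the proof (deferring to Appendix A of \cite{WXX1}), and your argument is the standard elementary one for this lemma.
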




The following lemma is a variant of Hardy-Littlewood-P\'olya type inequality \cite{HLP}, which plays an important role in many estimates in the paper.
\begin{lemma}\label{lemmaHLP}
Let $g\in C^1([0,1])$ satisfy $g(0)=0$, one has
\begin{equation}\label{ineqHLP}
\int_0^1|g(r)|^2 dr \leq \frac{1}{2} \int_0^1 |g^{\prime}(r)|^2 (1-r^2) \, dr,
\end{equation}
and
\be \label{HLP-2}
\int_0^1 |g|^2 r \, dr \leq C \int_0^1 \left| \frac{d(r g) }{dr}   \right|^2 \frac{1-r^2}{r} \, dr .
\ee
\end{lemma}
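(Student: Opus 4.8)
The plan is to regard both inequalities as one-dimensional weighted Hardy-type estimates and to prove them by the elementary ``completing the square'' device, exploiting that the linear function $r\mapsto r$ plays the role of a ground state for the degenerate operator $-\frac{d}{dr}\big((1-r^{2})\frac{d}{dr}\big)$ on $(0,1)$. One may assume throughout that $g$ is real, the complex-valued case being identical after replacing $g^{2}$ and $gg'$ by $|g|^{2}$ and $\Re(g\overline{g'})=\tfrac12(|g|^{2})'$. The only genuine point to verify is that the boundary contributions at the two degenerate endpoints vanish: at $r=1$ this is automatic since $1-r^{2}=0$, and at $r=0$ it follows from $g(0)=0$ together with $g\in C^{1}([0,1])$, which gives $|g(r)|\le r\,\|g'\|_{L^{\infty}}$, hence $g(r)^{2}=O(r^{2})$ as $r\to0^{+}$.

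For \eqref{ineqHLP}, I would start from the trivial lower bound
\[
0\;\le\;\int_0^1\Big(g'(r)-\frac{g(r)}{r}\Big)^{2}(1-r^{2})\,dr,
\]
expand the square, and rewrite the cross term using $2gg'=(g^{2})'$ followed by one integration by parts. This gives
\[
0\;\le\;\int_0^1|g'|^{2}(1-r^{2})\,dr+\int_0^1\Big(\Big[\tfrac{1-r^{2}}{r}\Big]'+\tfrac{1-r^{2}}{r^{2}}\Big)g^{2}\,dr-\Big[\tfrac{1-r^{2}}{r}\,g^{2}\Big]_0^1.
\]
The key algebraic identity is $\big[\tfrac{1-r^{2}}{r}\big]'+\tfrac{1-r^{2}}{r^{2}}=\big(-\tfrac1{r^{2}}-1\big)+\big(\tfrac1{r^{2}}-1\big)=-2$, and, as noted above, the boundary term is zero. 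Rearranging yields exactly $2\int_0^1g^{2}\,dr\le\int_0^1|g'|^{2}(1-r^{2})\,dr$, which is \eqref{ineqHLP}; the constant $\tfrac12$ is optimal, equality holding for $g(r)=r$.

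For \eqref{HLP-2}, I would substitute $G=rg$, note $\tfrac{d(rg)}{dr}=G'=g+rg'$, and simply expand
\[
\int_0^1\Big|\tfrac{d(rg)}{dr}\Big|^{2}\tfrac{1-r^{2}}{r}\,dr=\int_0^1 g^{2}\,\tfrac{1-r^{2}}{r}\,dr+\int_0^1 2gg'(1-r^{2})\,dr+\int_0^1(g')^{2}r(1-r^{2})\,dr.
\]
An integration by parts turns the middle integral into $\int_0^1(g^{2})'(1-r^{2})\,dr=2\int_0^1 g^{2}r\,dr$ (boundary terms vanishing because $1-r^{2}=0$ at $r=1$ and $g(0)=0$), while the first and third integrals on the right are manifestly nonnegative. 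Discarding them leaves $\int_0^1|g|^{2}r\,dr\le\tfrac12\int_0^1\big|\tfrac{d(rg)}{dr}\big|^{2}\tfrac{1-r^{2}}{r}\,dr$, which is \eqref{HLP-2} with $C=\tfrac12$.

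The argument is essentially routine, so I do not anticipate a substantial obstacle; the one place that genuinely requires care --- and the reason the hypotheses $g\in C^{1}([0,1])$ and $g(0)=0$ are imposed --- is the justification that the endpoint terms $\big[\tfrac{1-r^{2}}{r}g^{2}\big]_0^1$ (first inequality) and $\big[(1-r^{2})g^{2}\big]_0^1$ (second inequality) vanish. No density or approximation step is needed, as every manipulation above is legitimate directly for such $g$.
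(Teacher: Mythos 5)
Your proof is correct and complete. The paper itself gives no argument for this lemma (it defers to Appendix~A of the cited reference [WXX1]), so there is no in-text proof to compare against; your completing-the-square argument with the ``ground state'' $r\mapsto r$ is the standard and expected route for such weighted Hardy--Littlewood--P\'olya inequalities. The two computations check out: the algebraic identity $\bigl[\tfrac{1-r^2}{r}\bigr]'+\tfrac{1-r^2}{r^2}=-2$ is right, the expansion of $|(rg)'|^2\tfrac{1-r^2}{r}$ with the cross term integrating to $2\int_0^1 g^2 r\,dr$ is right, and you correctly identify the vanishing of the endpoint terms (using $g(r)=O(r)$ near $0$) as the only point where the hypotheses $g\in C^1([0,1])$, $g(0)=0$ are genuinely used. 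As a bonus you obtain the explicit constant $C=\tfrac12$ in \eqref{HLP-2} and the sharpness of \eqref{ineqHLP} at $g(r)=r$, neither of which is needed elsewhere in the paper but both of which are correct.
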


The following lemma is about a weighted interpolation inequality, which is quite similar to \cite[(3.28)]{M}.
\begin{lemma}\label{weightinequality} Let $g \in C^2[0, 1]$, then one has
\begin{equation} \label{weight1} \ba
\int_0^1 |g|^2r \, dr  \leq & C \left(\int_0^1 (1-r^2)|g|^2 r \, dr\right)^{\frac23} \left(\int_0^1 \frac{1}{r} \left|\frac{d}{dr}(rg)\right|^2 \, dr\right)^{\frac13} \\
&\ \ \ \  + C \int_0^1 (1-r^2)|g|^2 r\, dr ,
\ea
\end{equation}
and
\be \label{weight2} \ba
\int_0^1 \left| \frac{d}{dr}(rg) \right|^2 \frac1r \, dr &  \leq C \left[ \int_0^1 \frac{1-r^2}{r} \left| \frac{d}{dr} ( rg)  \right|^2 \, dr \right]^{\frac23} \left( \int_0^1 |\mathcal{L} g|^2 r \, dr  \right)^{\frac13} \\
&\ \ \ \ \ \ \ \ \ \ \ + C \int_0^1 \frac{1-r^2}{r} \left| \frac{d}{dr} ( rg)  \right|^2 \, dr .
\ea
\ee
\end{lemma}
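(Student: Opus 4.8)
The plan is to prove the two interpolation inequalities \eqref{weight1} and \eqref{weight2} by the same localization scheme: since in each of them the quantity on the left differs from the first quantity on the right only through the degenerate weight $1-r^2$, which vanishes at $r=1$, I would split each integral at a radius $1-\lambda$, with $\lambda\in(0,1/4]$ a parameter to be optimized at the very end, treat the interior region $(0,1-\lambda)$ (where $1-r^2$ is comparable to $1$) trivially, and handle the boundary layer $(1-\lambda,1)$ by a fundamental-theorem-of-calculus estimate anchored at a carefully chosen intermediate radius. First I would establish \eqref{weight1} in this way, and then reduce \eqref{weight2} to it by a change of unknown.

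For \eqref{weight1}, set $A:=\int_0^1(1-r^2)|g|^2r\,dr$, $B:=\int_0^1\frac1r\big|\frac{d}{dr}(rg)\big|^2\,dr$, and $h:=rg$, so that $h'=\frac{d}{dr}(rg)$. On $(0,1-\lambda)$ we have $1-r^2\ge 1-r\ge\lambda$, hence $\int_0^{1-\lambda}|g|^2r\,dr\le\lambda^{-1}A$. For the layer $(1-\lambda,1)$ I would use the mean value theorem for integrals to pick $\rho\in(1-2\lambda,1-\lambda)$ with $\rho|g(\rho)|^2\le\lambda^{-1}\int_{1-2\lambda}^{1-\lambda}|g|^2r\,dr$; since $1-r^2\ge\lambda$ on this window as well, this gives $|h(\rho)|^2\le|g(\rho)|^2\le C\lambda^{-2}A$. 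For $r\in(1-\lambda,1)$ write $h(r)=h(\rho)+\int_\rho^r h'(s)\,ds$ and bound the remainder by Cauchy--Schwarz using $\int_{1-2\lambda}^1 s\,ds\le 2\lambda$ and $B$, so that $|h(r)|^2\le C\lambda^{-2}A+C\lambda B$ there; integrating $|g|^2r\le 2|h|^2$ over $(1-\lambda,1)$ then yields $\int_{1-\lambda}^1|g|^2r\,dr\le C\lambda^{-1}A+C\lambda^2 B$. Summing the two pieces gives $\int_0^1|g|^2r\,dr\le C\lambda^{-1}A+C\lambda^2 B$ for all $\lambda\in(0,1/4]$; taking $\lambda=\min\{1/4,(A/B)^{1/3}\}$ produces precisely the right-hand side of \eqref{weight1}, with the additive term $C A$ coming from the range $(A/B)^{1/3}>1/4$ (and the inequality being trivial when $B=+\infty$).

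To obtain \eqref{weight2}, I would apply the estimate just proved to $w:=\frac1r\frac{d}{dr}(rg)$ in place of $g$. One checks directly that $\int_0^1\frac1r\big|\frac{d}{dr}(rg)\big|^2\,dr=\int_0^1|w|^2r\,dr$, $\int_0^1\frac{1-r^2}{r}\big|\frac{d}{dr}(rg)\big|^2\,dr=\int_0^1(1-r^2)|w|^2r\,dr$, and, by the very definition of $\mathcal L$, $\mathcal L g=\frac{d}{dr}\big(\frac1r\frac{d}{dr}(rg)\big)=w'$, whence $\int_0^1|\mathcal L g|^2r\,dr=\int_0^1|w'|^2r\,dr$. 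Thus \eqref{weight2} is exactly the inequality $\int_0^1|w|^2r\,dr\le C\big(\int_0^1(1-r^2)|w|^2r\,dr\big)^{2/3}\big(\int_0^1|w'|^2r\,dr\big)^{1/3}+C\int_0^1(1-r^2)|w|^2r\,dr$, and the same boundary-layer argument applies verbatim, the only modification being that in the fundamental-theorem step one now estimates $\big|\int_\rho^r w'\,ds\big|^2\le\big(\int_{1-2\lambda}^1 s|w'|^2\,ds\big)\big(\int_{1-2\lambda}^1 s^{-1}\,ds\big)$ and uses $\int_{1-2\lambda}^1 s^{-1}\,ds=-\ln(1-2\lambda)\le C\lambda$ for $\lambda\le 1/4$.

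The only genuinely delicate point is the calibration of the boundary layer: the intermediate radius $\rho$ must lie in a window on which the degenerate weight $1-r^2$ is still of order $\lambda$ (so that $|g(\rho)|$, respectively $|w(\rho)|$, is controlled by $A$), yet close enough to $(1-\lambda,1)$ that transporting $h$ (respectively $w$) from $\rho$ back into the layer costs only a fixed power of $\lambda$; matching these two competing contributions is what forces the single scaling $\lambda\sim(A/B)^{1/3}$ and, outside its admissible range, the additive lower-order term. Everything else — the elementary one-dimensional estimates and the Cauchy--Schwarz bookkeeping — is routine.
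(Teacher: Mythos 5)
Your argument is correct: the boundary-layer splitting at $r=1-\lambda$, the mean-value choice of the anchor radius $\rho$ in $(1-2\lambda,1-\lambda)$, the weighted Cauchy--Schwarz in the fundamental-theorem step, the optimization $\lambda\sim (A/B)^{1/3}$ producing the additive term outside the admissible range, and the reduction of \eqref{weight2} to the same scheme via $w=\frac1r\frac{d}{dr}(rg)$ with $\mathcal{L}g=w'$ all check out. The paper itself omits the proof, deferring to the appendix of \cite{WXX1} (and noting the analogy with \cite{M}), and your localization-plus-optimization argument is essentially the standard route taken there, so no substantive discrepancy to report.
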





{\bf Acknowledgement.}
The research of Wang was partially supported by NSFC grant 11671289. The research of  Xie was partially supported by  NSFC grants 11971307 and 11631008,  and Young Changjiang Scholar of Ministry of Education in China. The authors would like to thank Professor Yasunori Maekawa for helpful discussions.

\end{document}